\newcommand{\Der}{\mathrm{Der}}
\numberwithin{equation}{section}
\theoremstyle{plain}
\newtheorem{theorem}{Theorem}[section]
\newtheorem{thm}[theorem]{Theorem}
\newtheorem{proposition}[theorem]{Proposition}
\newtheorem{lemma}[theorem]{Lemma}
\newtheorem{lem}[theorem]{Lemma}
\newtheorem{cor}[theorem]{Corollary}
\theoremstyle{definition}
\newtheorem{definition}[theorem]{Definition}
\newtheorem{dfn}[theorem]{Definition}
\theoremstyle{remark}
\newtheorem{example}[theorem]{Example}
\newtheorem{rem}{Remark}
\newcommand{\emme}{{\scriptscriptstyle{M}}}
\newcommand{\erre}{{\scriptscriptstyle{R}}}
\newcommand{\esse}{{\scriptscriptstyle{S}}}
\newcommand{\uhhu}{{\scriptscriptstyle{U}}}
\newcommand{\C}{{\mathbb{C}}}
\newcommand{\kk}{{\Bbbk}}
\newcommand{\gb}{\beta}  
\newcommand{\gd}{\delta} 
\newcommand{\gD}{\Delta}
\newcommand{\Deltaell}{\Delta} 
\newcommand{\gve}{\varepsilon}
\newcommand{\gs}{\sigma}
\newcommand{\cC}{{\mathcal C}}
\newcommand{\cE}{{\mathcal E}}
\newcommand{\cG}{{\mathcal G}}
\newcommand{\cJ}{{\mathcal J}}
\newcommand{\cM}{{\mathcal M}}
\newcommand{\cN}{{\mathcal N}}
\newcommand{\cP}{{\mathcal P}}
\newcommand{\cS}{{\mathcal S}}
\newcommand{\cV}{{\mathcal V}}
\newcommand{\cW}{{\mathcal W}}
\newcommand{\End}[1]{\operatorname{End}(#1)}
\newcommand{\Hom}{\operatorname{Hom}}
\newcommand{\Tor}{{\rm Tor}}
\newcommand{\id}{{\rm id}}
\newcommand{\sll}{s}
\newcommand{\tl}{t}
\newcommand{\due}[3]{{}_{{#2 \!}} {#1}_{{\! #3}}\,}    
\newcommand{\pl}{\partial}
\newcommand{\rmref}[1]{{\rm (}\ref{#1}{\rm )}}
\newcommand{{\Hl}}{{H^{\ell}}} 
\newcommand{{\mHop}}{{m_{H^{\rm op}}}} 
\newcommand{{\Hop}}{{H^{\rm op}}} 
\newcommand{{\mUop}}{{m_{U^{\rm op}}}} 
\newcommand{{\Uop}}{{U^{\rm op}}}
\newcommand{{\mVop}}{{m_{V^{\rm op}}}} 
\newcommand{{\Vop}}{{V^{\rm op}}}  
\newcommand{{\Ae}}{{A^{\rm e}}}
\newcommand{{\Be}}{{B^{\rm e}}}
\newcommand{{\Aop}}{{A^{\rm op}}}
\newcommand{{\Aope}}{({A^{\rm op}})^{\rm e}}
\newcommand{{\Aopl}}{{A^{\rm op}_\pl}}
\newcommand{{\Bop}}{{B^{\rm op}}}
\newcommand{{\Bope}}{({B^{\rm op}})^{\rm e}}
\newcommand{{\Bpl}}{{B_\pl}}
\newcommand{{\Reh}}{{R^{\rm e}}}
\newcommand{{\Se}}{{S^{\rm e}}}
\newcommand{{\Rop}}{{R^{o}}}
\newcommand{{\Sop}}{{S^{o}}}
\newcommand{\Ropp}{{\scriptscriptstyle{\Rop}}}
\newcommand{\Sopp}{{\scriptscriptstyle{\Sop}}}
\newcommand{\Ree}{{\scriptscriptstyle{\Reh}}}
\newcommand{\See}{{\scriptscriptstyle{\Se}}}
\newcommand{{\Pe}}{{P^{\rm e}}}
\newcommand{{\Qe}}{{Q^{\rm e}}}
\newcommand{{\op}}{{{o}}}
\newcommand{{\coop}}{{{\rm coop}}}
\newcommand{{\sop}}{{*^{\rm op}}}
\newcommand{\Mod}{\mathbf{Mod}}                     %
\newcommand{\lact}{{\,\raise1pt\hbox{$\scriptscriptstyle{\rhd}$} \, }}                  %
\newcommand{\ract}{{\,\raise1pt\hbox{$\scriptscriptstyle{\lhd}$} \, }}                  
\newcommand{\blact}{{\,\raise1pt\hbox{$\scriptscriptstyle{\blacktriangleright}$} \, }}  %
\newcommand{\bract}{{\,\raise1pt\hbox{$\scriptscriptstyle{\blacktriangleleft}$} \, }}   %
\newcommand{{\gog}}{{G \rightrightarrows G_0}}
\newcommand{{\rra}}{\rightrightarrows}
\newcommand{{\lra}}{ \longrightarrow  }
\newcommand{{\lla}}{ \longleftarrow }
\newcommand{{\lma}}{ \longmapsto  }
\newcommand{{\bull}}{{\scriptscriptstyle{\bullet}}}
\newcommand{{\qqquad}}{{\quad\quad\quad}}
\newcommand{\lr}[1]{\big( #1 \big)}
\newcommand{\LR}[1]{\left[\underset{}{} #1 \right]}
\newcommand{\tensor}[1]{\otimes_{\scriptscriptstyle{#1}}}
\newcommand{\Scr}[1]{\mathscr{#1}}
\newcommand{\Rings}[1]{#1\text{-}\mathbf{Rings}}
\newcommand{\td}[1]{\tilde{#1}}
\newcommand{\bara}[1]{\overline{#1}}
\newcommand{\Sf}[1]{\mathsf{#1}}
\newcommand{\bT}{\boldsymbol{T}}
\newcommand{\bi}{\boldsymbol{i}}
\newcommand{\bt}{\boldsymbol{t}}
\begin{document}
\allowdisplaybreaks

\title{Morita base change in Hopf-cyclic (co)homology} 

\author{Laiachi El Kaoutit}
\author{Niels Kowalzig}

\address{L.E.K.: 
Universidad de Granada, Departamento de \'Algebra, Facultad de Educaci\'on y Humanidades de Ceuta,
El Greco ${\rm N}^{\rm \underline{o}}$~10, 51002 Ceuta, Espa\~na}

\email{kaoutit@ugr.es}

\address{N.K.: 
Universidad de Granada, Departamento de \'Algebra, Facultad de Ciencias, 18071 Granada, Espa\~na}

\email{kowalzig@ihes.fr}


\begin{abstract}
In this paper, we establish the invariance of cyclic (co)homology of left Hopf algebroids under the change of Morita equivalent base algebras. 
The classical 
result on Morita invariance for cyclic homology of associative algebras 
appears as a special example of this theory. 
 In our main application we consider the Morita equivalence between the algebra of complex-valued smooth functions on the classical $2$-torus and the  coordinate algebra of the noncommutative $2$-torus {with rational parameter}. We then construct a Morita base change left Hopf algebroid over {this} noncommutative $2$-torus and  show that 
its cyclic (co)homology  can be computed by means of the homology of the Lie 
algebroid of vector fields on the classical $2$-torus.
\end{abstract}

\subjclass[2010]{Primary 16D90, 16E40, 16T05; Secondary 18D10, 16T15, 19D55, 58B34}
\keywords{Morita equivalence; cyclic homology; Hopf algebroids; vector bundles;  Lie algebroids; noncommutative tori. \\  {\footnotesize Research of L.~El Kaoutit was supported by the grant MTM2010-20940-C02-01 from the Ministerio de Ciencia e Innovaci\'on and from FEDER. 
N.~Kowalzig acknowledges funding by the Excellence Network of the University of Granada (GENIL)}}
\maketitle

\section{Introduction}

The concept of left Hopf algebroids provides a natural
framework for unifying and extending classical constructions in homological
algebra. Group, groupoid, Lie algebra, Lie algebroid and Poisson (co)homology, Hochschild and cyclic homology for associative algebras, as well as Hopf-cyclic homology for Hopf algebras, are all special cases of the cyclic homology of left Hopf algebroids 
since the rings over which these theories can be expressed as derived functors are all left Hopf algebroids 
(see, for example, \cite{BoeSte:CCOBAAVC, ConMos:HACCATTIT, Cra:CCOHA, Kow:HAATCT, KowKra:CSIACT, KowPos:TCTOHA} for more details).

As for every (co)homology theory it is an interesting issue to examine its behaviour under (any suitable notion of) Morita equivalence.  Nevertheless, a satisfactory notion of  Morita equivalence between two possibly noncommutative left Hopf algebroids is up to our knowledge  far from being obvious. The difficulty comes out when, for instance, one tries to understand how the notion of Morita equivalence between two Lie algebroids, in the sense of  \cite{Cra:DAACVEIACC, Gin:GGOPVB} and others, can be reflected  to their respective associated (universal) left Hopf algebroids in such a way that invariant properties, especially homological ones, between equivalent Lie algebroids remain invariant at the level of left Hopf algebroids. 
In the commutative case, that is, for  commutative Hopf algebroids, several notions already  exist in the literature, see, e.g., \cite{Hovey:02, Hovey/Strickland:05}.

In this paper, we restrict ourselves to the case of Morita base change left Hopf algebroids. That is, we  study from a cyclic (co)homology point of view two Morita equivalent left Hopf algebroids of the form $(R,U) \sim (S,\tilde{U})$, where $R  \sim S$ are Morita equivalent base rings and $\tilde{U}$ is constructed from $U$. It is worth noticing that for the  case  of commutative Hopf algebroids or Hopf algebras,  this notion reduces to simply changing the base ring by an isomorphism. 
Nevertheless, this restriction is not far from geometric applications since, for example, the algebra of smooth functions on a smooth manifold $\cM$ is Morita equivalent to the endomorphism algebra of global smooth sections of a vector bundle on $\cM$. 
More precisely, one can start with a smooth vector bundle $\cP \to \cM$ and a Lie algebroid $(\cM, \cE)$, then associate to them a Morita base change $(\cC^{\infty}(\cM),\cV\Gamma(\cE)) \sim ({\rm End}(\Gamma(\cP)), \widetilde{\cV\Gamma(\cE))})$, where   $(\cC^{\infty}(\cM)
 ,\cV\Gamma(\cE))$ is the associated (universal) left Hopf algebroid attached to $(\cM, \cE)$, see Section 5. In the aim of illustrating our methods, we give an explicit application concerning the noncommutative $2$-torus {with rational parameter}. 

A left Hopf algebroid ($\times_R$-Hopf algebra) $U$ is, roughly speaking, a Hopf algebra whose ground ring is not
a commutative ring $\kk$ but a possibly noncommutative $\kk$-algebra $R$, see \cite{Boe:HA,Schau:DADOQGHA, Tak:GOAOAA}. In categorical terms, $U$ is a ring extension of the enveloping ring $\Reh=R\tensor{\Bbbk}R^{o}$ of the base algebra $R$, where the category of left $U$-modules is a right closed monoidal category, and the forgetful functor to the category of left $\Reh$-modules is strict monoidal and preserves right inner hom-functors.
As $\kk$-bialgebras are underlying Hopf algebras, (left) $R$-bialgebroids are the underlying structure of (left) Hopf algebroids, but for bialgebroids the forgetful functor is in general not right inner-hom preserving. 

Morita base change for bialgebroids (following \cite{Schau:MBCIQG}) provides a possibility to produce new bialgebroids by replacing the base algebra $R$ by a Morita equivalent base algebra $S$ in such a way that the resulting $R$-bialgebroid has a monoidal category of representations equivalent to that of the original $R$-bialgebroid. More generally, the base algebra $R$ can be replaced by a $\sqrt{\mbox{Morita}}$ equivalent algebra $S$, see \cite{Tak:squareMorita}: two algebras are $\sqrt{\mbox{Morita}}$ equivalent if one has an equivalence of $\kk$-linear monoidal categories of bimodules $ {}_{R^{\Sf{e}}}{\rm Mod}  \simeq {}_{S^{\Sf{e}}}{\rm Mod}$.
Such an equivalence relation between two bialgebroids is weaker than to consider two bialgebroids to be equivalent if their monoidal categories of (co)representations are so. In particular, Morita base change establishes a relation between two bialgebroids
  in a way that is meaningless for ordinary $\kk$-bialgebras, as already
said before.

Apart from what we mentioned above, the importance of the notion of Morita base change moreover consists in unifying seemingly different concepts: for example, every weak $\C$-bialgebra (which can be considered as bialgebroids \cite[\S3.2.2]{Boe:HA}) can be shown to be a face algebra (which are examples of 
bialgebroids as well \cite{Schau:FAATRB}) up to Morita base change \cite[\S5.2]{Schau:MBCIQG}. Here we present no application in this direction, this will be left for a future project.

Useful for our purposes is the fact that Morita base change equivalence carries over to the 
Hopf structure: an $R$-bialgebroid is left Hopf if and only if its Morita base change equivalent $S$-bialgebroid is left Hopf as well \cite[Prop.~4.6]{Schau:MBCIQG}.

In this paper, we will consider  the cyclic (co)homology for left Hopf algebroids from \cite{KowKra:CSIACT} and confront it with the Morita base change theory from \cite{Schau:MBCIQG}. Our aim is to give, in the spirit of \cite{McC:MEACH}, the explicit chain morphisms and chain homotopies that establish equivalences of (co)cyclic modules between the original left Hopf algebroid and the Morita base change left Hopf algebroid $\td{U}$; see, however, Remark \ref{cat} for a comment on a categorical approach. 
As a consequence, we obtain our central theorem which we copy here, see the main text for the details and in particular the notation used:

{\renewcommand{\thetheorem}{\bf{A}}

\begin{thm}{\em (Morita base change invariance of (Hopf-)cyclic (co)homology)}
Let $(R, U)$ be a left Hopf algebroid, $M$ a left $U$-comodule right $U$-module which is stable anti Yetter-Drinfel'd, and $(R,S,P,Q,\phi,\psi)$ a Morita context. Consider  its induced  $\sqrt{\mbox{Morita}}$ context $(R^{\Sf{e}},\Se,\Pe,\Qe,\phi^{\Sf{e}},\psi^{\Sf{e}})$ and the Morita base change left Hopf algebroid $(S,\tilde{U}:= \Pe \tensor{\Ree}U\tensor{\Ree}\Qe)$. Then
$$
\begin{array}{rclcrcl}
H_\bull(U, M) &\simeq& H_\bull(\td{U}, P \otimes_\erre M \otimes_\erre Q), && H^\bull(U, M) &\simeq& H^\bull(\td{U}, P \otimes_\erre M \otimes_\erre Q), \\
HC_\bull(U, M) &\simeq& HC_\bull(\td{U}, P \otimes_\erre M \otimes_\erre Q), && HC^\bull(U, M) &\simeq& HC^\bull(\td{U}, P \otimes_\erre M \otimes_\erre Q)
\end{array}
$$
are isomorphisms of $\Bbbk$-modules.
\end{thm}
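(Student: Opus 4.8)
The plan is to deduce all four isomorphisms at once from a single comparison at the level of the underlying (co)cyclic $\kk$-modules, following the template of McCarthy \cite{McC:MEACH} for associative algebras. First I would recall from \cite{KowKra:CSIACT} the explicit para-cyclic $\kk$-module $Z_\bull(U,M)$ (and its dual cocyclic module) whose Hochschild and cyclic (co)homology compute the four theories $H_\bull, H^\bull, HC_\bull, HC^\bull$. In degree $n$ this object is built from $M$ and the $n$-fold tensor power of $U$ over $R$: the faces are induced by the multiplication of $U$, the counit, and the right $U$-action on $M$; the degeneracies by the unit; and the cyclic operator $\tau$ from the left-Hopf translation map $u \mapsto u_+ \otimes u_-$ together with the left $U$-comodule structure of $M$. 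The stable anti-Yetter--Drinfel'd condition is precisely what forces the para-cyclic module to be genuinely cyclic, i.e.\ $\tau^{n+1}=\id$ in degree $n$.

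Next I would write down the analogous cyclic module $Z_\bull(\td{U}, P\otimes_\erre M\otimes_\erre Q)$ for the Morita base change left Hopf algebroid $\td{U}=\Pe\tensor{\Ree}U\tensor{\Ree}\Qe$ and identify it. Because each copy of $\td U$ carries a Morita factor $\Pe$ on the left and $\Qe$ on the right, in the tensor power $\td U^{\otimes_S n}$ the right factor of one copy contracts against the left factor of the next via the $\sqrt{\mbox{Morita}}$ isomorphisms $\psi^{\Sf{e}}$ (upgrading $\psi\colon Q\otimes_\erre P\to S$). Thus $\td U^{\otimes_S n}$ \emph{telescopes}, collapsing to a copy of $U^{\otimes_\erre n}$ flanked by a single outer $P$ on the left and $Q$ on the right; absorbing these into the coefficients exhibits $Z_n(\td U, P\otimes_\erre M\otimes_\erre Q)$ as $P\otimes_\erre Z_n(U,M)\otimes_\erre Q$, the outer $P$ and $Q$ being the two ends of the cycle. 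I would then define, in McCarthy's spirit, a generalized trace $\mathrm{tr}\colon Z_\bull(\td U, P\otimes_\erre M\otimes_\erre Q)\to Z_\bull(U,M)$ that closes the cycle by contracting the flanking bimodules through $\phi\colon P\otimes_\esse Q\to R$, and an inclusion $\iota$ in the opposite direction built from the Morita units. One checks $\mathrm{tr}\circ\iota=\id$ directly, while $\iota\circ\mathrm{tr}$ is homotopic to the identity by an explicit prism/extra-degeneracy homotopy, exactly as in the algebra case.

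The main obstacle, and the technical heart of the argument, is to verify that $\mathrm{tr}$, $\iota$, and the homotopy are morphisms of (co)cyclic modules, i.e.\ that they commute not only with faces and degeneracies but with the cyclic operator $\tau$. Compatibility with faces and degeneracies is a routine consequence of the bilinearity of $\phi$ and $\psi$ and the definition of $\td U$; but the cyclic operator wraps the last tensor factor around to the front, so the Morita contraction at the junction of the cycle must be shown to commute with $\tau$. This is where the full Morita coherence between $\phi$ and $\psi$ (the two associativity axioms of the context) must be used together with the left-Hopf translation identities, since the contraction is threaded through the comodule structure of $P\otimes_\erre M\otimes_\erre Q$. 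In particular one must confirm that $P\otimes_\erre M\otimes_\erre Q$ is a stable anti-Yetter--Drinfel'd module over $\td U$, so that $Z_\bull(\td U,\,\cdot\,)$ is genuinely cyclic and $\tau$ is well defined; this should follow from the transfer of the SAYD data under Morita base change established earlier, and I expect the interaction of the antipode-like translation map with the base-change contraction to be the delicate point of the computation.

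Finally I would conclude. Once $Z_\bull(U,M)$ and $Z_\bull(\td U, P\otimes_\erre M\otimes_\erre Q)$ are shown to be homotopy equivalent \emph{as cyclic $\kk$-modules} via the mutually (quasi-)inverse maps $\mathrm{tr}$ and $\iota$ respecting $\tau$, all four claimed isomorphisms follow simultaneously: Hochschild homology and its dual are obtained by applying the (normalized) $b$-complex to this equivalence, and cyclic (co)homology by applying Connes' cyclic bicomplex, each of which is a functor of the (co)cyclic module. Dualizing the same maps yields the cohomological statements $H^\bull$ and $HC^\bull$, so no separate argument is needed. The classical Morita invariance of cyclic homology of algebras is then recovered by specializing $U=R^{\Sf e}$ (or the relevant enveloping algebroid), in which case $\mathrm{tr}$ and $\iota$ reduce to McCarthy's trace and inclusion.
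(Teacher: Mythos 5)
Your homology argument coincides with the paper's own proof: your inclusion and generalized trace are exactly the maps $\theta_\bull$ and $\gamma_\bull$ of \rmref{recattolico1} and \rmref{reinacattolica1}, your prism homotopy is the $h_\bull$ of Proposition \ref{prop: h}, compatibility with the cyclic operator is Lemma \ref{Lemma:tgamma}, the SaYD transfer to $\td{M}$ is Lemma \ref{Lemma:I}, and the passage from $H_\bull$ to $HC_\bull$ is the SBI argument closing the proof of Theorem \ref{thm:main1}. Two corrections there. First, for a general Morita context neither composite is the identity on the nose: already in degree zero one gets $\gamma_0\theta_0(m)=\sum_{i,j}(q'_jp_i)\,m\,(q_ip'_j)$, so the paper proves $bh_n+h_{n-1}b=\gamma_n\theta_n-\id$, and likewise $\theta_n\gamma_n$ is only homotopic to the identity; your expectation $\mathrm{tr}\circ\iota=\id$ holds for matrix algebras but not here, which is harmless since homotopies on both sides suffice. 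Second, your telescoping identification of $C_n(\td{U},\td{M})$ should be regarded as a heuristic only: the tensor products there are over $\Sop$ via the $\blact,\bract$-actions of $\td{U}$, so the contraction $\Qe\otimes_\See\Pe\simeq\Reh$ cannot be applied factorwise without precisely the bookkeeping that the explicit maps carry out.

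The genuine gap is your final claim that dualizing the same maps yields the cohomological statements, ``so no separate argument is needed.'' In this theory $H^\bull(U,M)$ and $HC^\bull(U,M)$ are \emph{not} the $\kk$-linear duals of the homology theories: they are computed from the cocyclic module $C^\bull(U,M)=U^{\otimes_\erre\bull}\otimes_\erre M$ of \rmref{anightinpyongyang}, which is the Connes \emph{cyclic} dual of $C_\bull(U,M)=M\otimes_\Ropp U^{\otimes_\Ropp\bull}$ --- a different complex, built from different tensor products (over $\Reh$ with the $\lact,\ract$-actions rather than over $\Rop$ with $\blact,\ract$) and different structure maps. Linear dualization of your chain equivalence computes the cohomology of $\Hom_\kk(C_\bull(U,M),\kk)$, which is not $H^\bull(U,M)$; the torus application makes the discrepancy concrete, since there $H_\bull\simeq H_\bull(K,M)$ while $H^\bull\simeq M\otimes_\erre\textstyle\bigwedge^\bull_\erre K$, and these are not mutually dual in general. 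Nor does cyclic duality rescue the shortcut: it is functorial on morphisms of (co)cyclic objects, but it scrambles faces, degeneracies and the cyclic operator, so your simplicial homotopy for the $b$-complex of $C_\bull$ gives no homotopy for the $\beta$-complex of its cyclic dual. This is exactly why the paper repeats the construction on the cocyclic side, with new maps $\zeta_\bull,\xi_\bull$ and the new homotopy \rmref{amagica}, before concluding Theorem \ref{thm:main2}. Your cohomology half must be argued the same way: the construction is analogous to the homological one, but it is a construction, not a formal consequence of it.
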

}
As an application, we first indicate how the classical result of Morita invariance for cyclic homology of associative algebras (see, e.g., \cite{Con:NCDG, DenIgu:HHATSOFP, McC:MEACH}) fits into our general theory. 

Second, we consider a Morita context between the complex-valued smooth functions on the commutative real $2$-torus  $\mathbb{T}^2$ 
and the coordinate ring of the noncommutative $2$-torus with rational parameter. After reviewing the construction for this case, we apply the Morita invariance to the universal left Hopf algebroid associated to the  Lie algebroid of vector fields on $\mathbb{T}^2$ and its Morita base change left Hopf algebroid $\widetilde{\cV K}$ over this noncommutative $2$-torus, which establishes a passage from commutative to noncommutative geometry: in the spirit of considering left Hopf algebroids as the noncommutative analogue of Lie algebroids and their primitive elements as the noncommutative analogue of (generalised) vector fields, the primitive elements of $\widetilde{\cV K}$ can be seen to consist of vector fields on the noncommutative torus.
{\renewcommand{\thetheorem}{\bf{B}}
\begin{cor}
Let $\Sf{q} \in \mathbb{S}^1$ be a root of unity,
and consider the Lie algebroid $\big(R=C^\infty(\mathbb{T}^2),K = \Der_\C(C^\infty(\mathbb{T}^2))\big)$ of vector fields on the complex torus $\mathbb{T}^2$ and its associated left Hopf algebroid $(R,\cV K)$. 
Let $M$ be a right $\cV K$-module 
and $(R,S,P,Q,\phi,\psi)$ the Morita context of Eq.~\eqref{Eq: Morita torus}. 
We then have the following natural $\C$-module isomorphisms
$$
\begin{array}{rclcrcl}
H_\bull(\cV K , M) &\simeq & H_\bull( \widetilde{\cV K}, \td{M}), && HC_\bull(\cV K, M) &\simeq& HC_\bull( \widetilde{\cV K}, \td{M}), \\
H^\bull(\cV K , M) &\simeq & H^\bull( \widetilde{\cV K}, \td{M}), && HC^\bull(\cV K, M) &\simeq& HC^\bull( \widetilde{\cV K}, \td{M}), 
\end{array}
$$
where $\widetilde{\cV K}$ is the Morita base change left Hopf algebroid over the noncommutative torus $\cC^{\infty}(\mathbb{T}_{\Sf{q}}^2)$ whose structure maps are given as in \S\ref{subsect:Morita}.\\
Furthermore,  assume that $M$ be $R$-flat. Then we have that
$$
\begin{array}{rclcrcl}
H_\bull( \widetilde{\cV K}, \td{M}) &\!\!\!\!\simeq&\!\!\!\!  H_\bull(K, M), && 
HC_\bull( \widetilde{\cV K}, \td{M}) &\!\!\!\!\simeq&\!\!\!\! \textstyle\bigoplus_{i \geq 0}H_{\bull -2i}(K,M), \\
H^\bull( \widetilde{\cV K}, \td{M}) &\!\!\!\!\simeq&\!\!\!\!  M \otimes_\erre \textstyle\bigwedge^\bull_\erre K, && 
HP^\bull( \widetilde{\cV K}, \td{M}) &\!\!\!\!\simeq&\!\!\!\! \textstyle\bigoplus_{i \equiv \bull {\rm mod} 2}H_{i}(K,M) \\
\end{array}
$$ 
are natural $\mathbb{C}$-module isomorphisms,
where $H_\bull(K, M) :=  \Tor^{\cV K}_\bull(M, R)$,
 and where $HP^\bull$ denotes periodic cyclic cohomology.
\end{cor}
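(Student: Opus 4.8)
The plan is to split the statement into two parts and treat them separately. The first block of four isomorphisms I would obtain as a direct instance of Theorem~A applied to the left Hopf algebroid $(R,\cV K)$. To do so I would first equip the right $\cV K$-module $M$ with its canonical left $\cV K$-comodule structure --- the one carried by the base algebra $R$ --- under which $M$ becomes a stable anti Yetter-Drinfel'd module; this is precisely the coefficient datum that makes the cyclic theory of $\cV K$ reproduce Lie algebroid (co)homology. Feeding the Morita context $(R,S,P,Q,\phi,\psi)$ of Eq.~\eqref{Eq: Morita torus}, its induced $\sqrt{\mbox{Morita}}$ context, and the coefficients $\td M := P \otimes_\erre M \otimes_\erre Q$ into Theorem~A then gives
$$
\begin{array}{rclcrcl}
H_\bull(\cV K, M) &\simeq& H_\bull(\widetilde{\cV K}, \td M), && H^\bull(\cV K, M) &\simeq& H^\bull(\widetilde{\cV K}, \td M), \\
HC_\bull(\cV K, M) &\simeq& HC_\bull(\widetilde{\cV K}, \td M), && HC^\bull(\cV K, M) &\simeq& HC^\bull(\widetilde{\cV K}, \td M),
\end{array}
$$
with naturality in $M$ inherited from the explicit chain maps and homotopies that underlie Theorem~A. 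The only point to verify is that the torus Morita context genuinely meets the hypotheses, which is routine.

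Through these isomorphisms the second block reduces to computing the cyclic theory of $(R,\cV K)$ itself, and here the decisive input is geometric: $K=\Der_\C(C^\infty(\mathbb{T}^2))$ is a finitely generated projective --- indeed free of rank $2$ --- $R$-module. I would use this to invoke the Poincar\'e--Birkhoff--Witt theorem for the Lie--Rinehart algebra $(R,K)$ and the resulting Chevalley--Eilenberg--Rinehart resolution
$$
\cdots \lra \cV K \otimes_\erre \textstyle\bigwedge^2_\erre K \lra \cV K \otimes_\erre K \lra \cV K \lra R \lra 0
$$
of $R$ by $\cV K$-projective modules. Applying $M \otimes_{\cV K}(-)$ and using the $R$-flatness of $M$ to collapse the terms to $M \otimes_\erre \bigwedge^\bull_\erre K$, I would recover the Chevalley--Eilenberg--Rinehart chain complex computing $\Tor^{\cV K}_\bull(M,R)=:H_\bull(K,M)$; combined with the identification $H_\bull(\cV K,M)\simeq \Tor^{\cV K}_\bull(M,R)$ from \cite{KowKra:CSIACT}, this yields the first isomorphism of the second block. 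Dually, the antisymmetrisation ($\HKR$) map realises the complex computing $H^\bull(\cV K,M)$ on its antisymmetric part, where the coboundary vanishes, so that $H^\bull(\cV K,M)\simeq M \otimes_\erre \bigwedge^\bull_\erre K$ --- just as in the Connes--Moscovici computation for enveloping algebras of Lie algebras.

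To pass from the Hochschild to the cyclic and periodic theories, I would analyse Connes' mixed complex $(b,B)$ on the normalised model: under the $\HKR$ isomorphism the boundary $b$ becomes the Chevalley--Eilenberg--Rinehart differential, while the Connes operator $B$ becomes trivial on the antisymmetric model. The $SBI$-sequence should then degenerate and Connes' periodicity deliver $HC_\bull(\cV K,M)\simeq \bigoplus_{i\geq 0}H_{\bull-2i}(K,M)$, while the two-periodicity of the periodic theory collects the homologies of a fixed parity, $HP^\bull(\cV K,M)\simeq \bigoplus_{i\equiv \bull\,\mathrm{mod}\,2}H_i(K,M)$; transported back along the first block --- the periodic statement following from the $HC^\bull$-isomorphism by Connes' $S$-periodicity --- these are the remaining isomorphisms. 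In practice all of these computations for $\cV K$ are available from the cyclic theory developed in \cite{KowPos:TCTOHA}, to which I would appeal rather than redo them. The main obstacle I anticipate is exactly this last step --- pinning down the Connes operator $B$ under $\HKR$ and establishing the degeneration --- which is where the projectivity of $K$ and the flatness of $M$ are genuinely needed, precisely as in the computations for $C^\infty(\cM)$ and for $U(\frkg)$.
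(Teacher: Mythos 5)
Your proposal is correct and follows essentially the same route as the paper: the first block is exactly an application of Theorems \ref{thm:main1} and \ref{thm:main2} to $(R,\cV K)$ with the Morita context \eqref{Eq: Morita torus} and the canonical (trivial-coaction) SaYD structure on $M$. For the second block the paper, like you, transports the known computation of the cyclic (co)homology of $\cV K$ --- valid since $K$ is free of rank $2$ over $R$ and $M$ is $R$-flat --- through these isomorphisms, citing \cite[Theorem 5.2]{KowKra:CSIACT} and its dual version \cite[Theorem 3.14]{KowPos:TCTOHA} for precisely the Poincar\'e--Birkhoff--Witt/Chevalley--Eilenberg--Rinehart and mixed-complex arguments you sketch.
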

}

\smallskip

\textbf{Acknowledgements.}  
It is a pleasure to thank J.~G\'omez-Torrecillas, M.~Khalkhali, U.~Kr\"ahmer, and A.~Weinstein for stimulating discussions and comments.  The authors are also grateful to the referee for the careful reading of the manuscript and the useful comments.

\section{Preliminaries}
\subsection{Some conventions}
Throughout this note, ``ring'' means associative algebra over a fixed commutative ground ring $\Bbbk$. 
All other algebras, modules etc., will have an underlying structure of a central 
$\Bbbk$-module. 
Given a ring $R$, we denote by ${}_R\Mod$ the category of 
left $R$-modules, 
by $R^{o}$ the 
opposite ring and by
$R^\mathrm{e} := R \otimes_{\Bbbk} R^{o}$ 
the enveloping algebra 
of $R$. 
An {\em $R$-ring} is a monoid
in the monoidal
category $({}_{\Reh}\Mod, \otimes_\erre, R)$ of $\Reh$-modules (i.e., $(R,R)$-bimodules
with symmetric action of $\Bbbk$), fulfilling
associativity and unitality. Likewise,  
an {\em $R$-coring} is a comonoid in
$({}_{\Reh}\Mod, \otimes_\erre, R)$,  
fulfilling coassociativity and
counitality. 

Our main object is an $\Reh$-ring $U$.
Explicitly, such an $\Reh$-ring is given by a $\Bbbk$-algebra 
homomorphism 
$
		  \eta=\eta_U : R^\mathrm{e} \rightarrow U
$ 
whose restrictions
\begin{equation}
\label{wind&rain}
		  \sll:= \eta( - \otimes_{\Bbbk} 1) : 
		  R \to U 
		  \quad \mbox{and} \quad 
		  \tl := \eta(1 \otimes_{\Bbbk} -) : 
		  \Rop \to U
\end{equation}
will be called the {\em source} and {\em
target} map, respectively. Left and right multiplication in $U$
give rise to an 
$(\Reh,\Reh)$-bimodule structure on $U$,
that is, four  actions of $R$
that we denote by 
\begin{equation*}\label{bimod-lmod}
		  r \lact u \ract r' :=\sll(r)t(r')u,\quad 
		  r \blact u \bract r'
		  :=u\sll(r')t(r), 
		  \quad r,r'\in R, \ u \in U,
\end{equation*}
which are commuting, in the sense that, for every $a, a', r, r' \in R$ and $u,v \in U$, we have 
\begin{eqnarray}
\label{pouvoir}
a' \blact\lr{ r \lact u \ract r'} \bract a &=& r \lact \lr{a'\blact u \bract a} \ract r'; \\
\nonumber
\lr{u \bract r} \lr{ v \ract a} &=& \lr{a \blact u} \lr{r \lact v}. 
\end{eqnarray}

If not stated otherwise, we view $U$ as
an $(R,R)$-bimodule using the actions $\lact,\ract$, denoted ${}_{\lact}U_{\ract}$.
In particular, we define 
the tensor product
$U \otimes_\erre U$
with respect to this bimodule structure.
On the other hand, using the
actions $\blact, \bract$ permits to define the 
{\em Sweedler-Takeuchi product}, see \cite{Swe:GOSA, Tak:GOAOAA}:
\begin{equation*}
\label{taki}
		  U \times_\erre U :=  
		  \left\{\underset{}{} \textstyle\sum_i u_i \otimes_\erre
		  v_i 
		  \in U \otimes_\erre U \mid 
		  \sum_i r \blact u_i 
		  \otimes_\erre v_i = 
		  \sum_i u_i \otimes_\erre v_i \bract r, 
		  \ \forall r \in R\right\}.
\end{equation*}

One easily verifies that $U \times_\erre U$ is an $\Reh$-ring via factorwise multiplication, with unit element $1_\uhhu \otimes_\erre 1_\uhhu$ and $\eta_{\uhhu \times_{\scriptscriptstyle{\erre}} \uhhu}(r \otimes_\Bbbk r') = s(r) \otimes_\erre t(r')$, for $r, r' \in R$.

\subsection{Bialgebroids} \cite{Tak:GOAOAA}
Bialgebroids are a generalisation of
bialgebras. An important subtlety 
is that the algebra and coalgebra 
structure are defined in
different monoidal categories. 

\begin{definition}\label{left-bialg}
Let $R$ be a $\Bbbk$-algebra.
A {\em left bialgebroid} over $R$ 
is an $\Reh$-ring $U$
together with two homomorphisms of
 $\Reh$-rings
$$
		  \gD : U \rightarrow U
		  \times_\erre U,\quad
		  \hat \varepsilon : U \rightarrow \mathrm{End}_{\Bbbk}(R)  
$$
which turn $U$ into an $R$-coring 
with coproduct $\gD$ (viewed as a
 map $U \rightarrow  U_\ract  \otimes_\erre \due U \lact {}$) and
 counit $\varepsilon : U \rightarrow R$,
$u \mapsto (\hat\varepsilon (u))(1)$.
\end{definition}

So one has for example for $u \in U$, $r, r' \in R$
\begin{equation}
\label{panuelos}
		  \Delta (r \lact u \ract r')=
		  r \lact u_{(1)} \otimes_{\erre} 
		  u_{(2)} \ract r',\quad
		  \Delta (r \blact u \bract r')=
		  u_{(1)} \bract r' \otimes_{\erre} r \blact u_{(2)},
\end{equation}
using Sweedler's shorthand 
notation 
$u_{(1)} \otimes_{\erre} u_{(2)}$ 
for $\Delta(u)$, as well as in $U \times_{\erre} U$ the identity
\begin{equation}
\label{takinew}
		  r \blact u_{(1)} \otimes_{\erre} u_{(2)} =
		  u_{(1)} \otimes_{\erre} u_{(2)} \bract r.
\end{equation}
The counit, on the other hand, fulfills for any $u, v \in U$ and $r, r' \in R$
\begin{equation}\label{Eq:counit}
\varepsilon(r \lact u \ract r') = r\varepsilon(u)r', \quad \varepsilon (u \bract r) =  \varepsilon(r \blact u), \quad
\varepsilon(uv) = \varepsilon(u \bract \varepsilon (v)) =   \varepsilon(\varepsilon (v) \blact u).
\end{equation}

\subsection{Left Hopf algebroids} {\cite{Schau:DADOQGHA}}
Left Hopf algebroids have been
introduced by Schauenburg 
under the name {\em $\times_\erre$-Hopf
algebras} and 
generalise Hopf 
algebras towards left bialgebroids.
For a left bialgebroid $U$ over $R$, one
defines the
{\em (Hopf-)Galois map} 
\begin{equation*}
\label{Galois}
\beta: {}_\blact U \otimes_\Ropp U_\ract \to 
U_\ract \otimes_\erre {}_\lact U, \quad u
\otimes_\Ropp v \mapsto  
u_{(1)}  \otimes_\erre u_{(2)}v, 
\end{equation*}
where 
\begin{equation}
\label{tata}
 {}_\blact U \otimes_\Ropp U_\ract = U
 \otimes_{\Bbbk} U/
{{\rm
 span}\{r \blact u \otimes_{\Bbbk} v - u
 \otimes_{\Bbbk} v \ract r \mid
u,v \in U, r \in R \}}.
\end{equation}

\begin{dfn}\cite{Schau:DADOQGHA}
\label{hopftimesleft}
A left $R$-bialgebroid $U$ is 
called a 
{\em left Hopf algebroid} 
(or {\em $\times_\erre$-Hopf algebra}) if
$\beta$ is a bijection. 
\end{dfn}

\noindent By means of a Sweedler-type notation 
\begin{equation*}
\label{pm}
		  u_+ \otimes_\Ropp u_- := 
		  \beta^{-1}( u \otimes_\erre 1)
\end{equation*}
for the translation map
$
		  \beta^{-1}(- \otimes_\erre 1) : U \rightarrow 
		  {}_\blact U \otimes_\Ropp U_\ract,
$ 
one obtains for all $u, v \in U$, $r, r'\in R$ the following useful identities
\cite[Prop.~3.7]{Schau:DADOQGHA}:
\begin{eqnarray}
\label{Sch1}
u_{+(1)} \otimes_\erre u_{+(2)} u_- &=& u \otimes_\erre 1 \in U_\ract \otimes_\erre {}_\lact U, \\
\label{Sch2}
u_{(1)+} \otimes_\Ropp u_{(1)-} u_{(2)}  &=& u \otimes_\Ropp  1 \in  {}_\blact U
\otimes_\Ropp  U_\ract, \\ 
\label{Sch3}
u_+ \otimes_\Ropp  u_- & \in 
& U \times_\Rop U, \\
\label{Sch38}
u_{+(1)} \otimes_\erre u_{+(2)} \otimes_\Ropp  u_{-} &=& u_{(1)} \otimes_\erre
u_{(2)+} \otimes_\Ropp  u_{(2)-},\\
\label{Sch37}
u_+ \otimes_\Ropp  u_{-(1)} \otimes_\erre u_{-(2)} &=& u_{++} \otimes_\Ropp
u_- \otimes_\erre u_{+-}, \\
\label{Sch4}
(uv)_+ \otimes_\Ropp  (uv)_- &=& u_+v_+
\otimes_\Ropp  v_-u_-, 
\\ 
\label{Sch47}
u_+u_- &=& \sll (\varepsilon (u)), \\
\label{Sch48}
u_+ \tl (\varepsilon (u_-)) &=& u, \\
\label{Sch5}
(\sll (r) \tl (r'))_+ \otimes_\Ropp  (\sll (r) \tl (r') )_- 
&=& \sll (r) \otimes_\Ropp  \sll (r'), 
\end{eqnarray}
where in (\ref{Sch3}) we mean the Sweedler-Takeuchi product
\begin{equation*}
\label{petrarca}
		  U \times_\Rop U:=
		  \left\{\underset{}{} \textstyle\sum_i u_i \otimes_\Ropp  v_i \in 
		  {}_\blact U \otimes_\Ropp  U_\ract\,|\,
		  \sum_i u_i \ract r \otimes_\Ropp  v_i=
		  \sum_i u_i \otimes_\Ropp  r \blact
		  v_i, \,\, \forall r \in R
		  \right \},
\end{equation*}
which is an algebra by factorwise
multiplication, but with opposite 
multiplication on the second factor.
Note that in (\ref{Sch37}) the tensor product
over $R^{o}$ links the first and
third tensor component. 
By (\ref{Sch1}) and (\ref{Sch3}), one can write
$$
\beta^{-1}(u \otimes_\erre v) = u_+ \otimes_\Ropp  u_-v.
$$

\subsection{$U$-modules}
Let $(R, U)$ be a left bialgebroid. 
Left and right $U$-modules are defined as modules over the ring $U$, with respective actions denoted by juxtaposition. 
We denote the respective categories by
${}_U\Mod$ and ${}_{U^\op}\Mod$; while ${}_U\Mod$ is
a monoidal category, ${}_{U^\op}\Mod$ in general is
not \cite{Schau:BONCRAASTFHB}. One has a forgetful functor  
${}_{U}\Mod \rightarrow {}_{\Reh}\Mod$ using which
we consider every left 
$U$-module $N$ also as an $(R,R)$-bimodule
with actions 
\begin{equation}\label{brot}
	anb := 	  a \lact n \ract b := \sll(a)\tl(b)n,\quad
		  a,b \in R,n \in N.
\end{equation} 
Similarly, every right $U$-module $M$ is also 
an $(R,R)$-bimodule via
\begin{equation}
\label{salz}
	amb:= 	  a \blact m \bract b := 
			m \sll(b) \tl(a),\quad
		  a,b \in R,m \in M,
\end{equation} 
and in both cases we usually prefer to express these actions just by juxtaposition if no ambiguity is to be expected.

\subsection{$U$-comodules}

Similarly as for coalgebras, one may
define comodules over 
bialgebroids, but the underlying
$R$-module structures need some extra
attention. For the following definition
confer e.g.\
\cite{Schau:BONCRAASTFHB,Boe:GTFHA,
BrzWis:CAC}.

\begin{dfn}
\label{tempo}
A {\em left $U$-comodule} for a left bialgebroid $(R, U)$ 
is a left comodule of the underlying $R$-coring $(U, \gD,
\varepsilon)$, 
i.e., a left $R$-module $M$ with action $L_\erre: (r,m) \mapsto rm$ and a
left $R$-module map  
\begin{equation*}
		  \gD_\emme: 
		  M \to U_\ract \otimes_\erre M, 
		  \quad 
		  m \mapsto m_{(-1)} \otimes_\erre m_{(0)}
\end{equation*}
satisfying the usual coassociativity and
 counitality axioms. We denote the category of left
 $U$-comodules 
by ${}_U\bf Comod$.
\end{dfn}

\noindent On any left $U$-comodule
one can additionally define 
a right $R$-action 
\begin{equation}
\label{grimm}
mr := \varepsilon\big(m_{(-1)}\bract r\big)m_{(0)}.
\end{equation}
This action originates in fact from the algebra morphism
\begin{equation}
\label{Eq:U*}
 R \to U^*, \quad
 r \mapsto \left[u \mapsto \varepsilon(u\bract r) \right],
\end{equation}
where $U^*:=\Hom_{R}(U_\ract,R_\erre)$ is the right convolution ring of the underlying $R$-coring $U$, and the canonical functor ${}_U{\bf Comod} \to \Mod_{U^*}$ that endows any left $U$-comodule $X$ with a right $U^*$-action  given  by 
$$
x\, \sigma \,\, =\,\, \sum_{(x)} \sigma(x_{(-1)}) x_{(0)}
$$
for every $x \in X$ and $\sigma \in U^*$. 
The above action is then the restriction to scalars associated to the algebra morphism \eqref{Eq:U*}, 
and the action \eqref{grimm} is the unique one that turns $M$ into a left
$\Reh$-module in such a way that the coaction is an
$\Reh$-module morphism
$$
		  \gD_\emme: M \to U \times_\erre M, 
$$
where $U \times_\erre M$ is the Sweedler-Takeuchi product
$$
U \times_\erre M := \left\{\underset{}{} \textstyle\sum_i u_i \otimes_\erre m_i		 
		  \in U \otimes_\erre M \mid
		  \sum_i u_i\tl (a) \otimes_\erre m_i
		  = \sum_i u_i \otimes_\erre m_i a, \
		  \forall a \in R\right\}.
$$
In other words, $M$ becomes  a left  $\times_R$-$U$-comodule. Conversely, any left $\times_R$-$U$-comodule gives rise to a left $U$-comodule. This correspondence establishes  in fact an isomorphism of categories.

As a result of the previous discussion, $\Delta_\emme$ satisfies the identities 
\begin{eqnarray}
\label{maotsetung}
		  \gD_\emme(rmr') &=& 
		  \lr{r \lact  m_{(-1)} \bract r'} \otimes_\erre
		  m_{(0)}, \\
		  \label{douceuretresistance}
		  m_{(-1)} \otimes_\erre m_{(0)}r
		  &=&  
		 \lr{ r \blact m_{(-1)}}  \otimes_\erre m_{(0)}.
\end{eqnarray}

\subsection{Cyclic homology for left Hopf algebroids}

\subsubsection{Stable anti Yetter-Drinfel'd modules}

The following definition is the left
bialgebroid right module and left
comodule version of the corresponding notion in
\cite{BoeSte:CCOBAAVC, HajKhaRanSom:SAYD}.

\begin{dfn}
\label{SAYD}
Let $(R, U)$ be a left Hopf algebroid, and let
 $M$ simultaneously be a left
 $U$-comodule and a right $U$-module with
 action denoted by $(m, u) \mapsto mu$
 for $u \in U$, $m \in M$. We call $M$
 an {\em anti Yetter-Drinfel'd (aYD)
 module} if: 
\begin{enumerate}
\item
The two $\Reh$-module structures on $M$ originating from its nature as $U$-comodule resp.\ right $U$-module coincide:
for all $r, r' \in R$, $m \in M$
\begin{eqnarray}
\label{campanilla1}
rm &=& r \blact m, \\
\label{campanilla2}
mr' &=& m \bract r',
\end{eqnarray}
where the right $R$-module structure on the left hand side is given by \rmref{grimm}. 
\item
For $u \in U$ and $m \in M$ one has the following compatibility between action and coaction:
\begin{equation}
\label{huhomezone}
		  \gD_\emme(mu) = 
		  u_- m_{(-1)} u_{+(1)} \otimes_\erre m_{(0)} u_{+(2)}.
\end{equation}
\end{enumerate}
The aYD module $M$ is
 said to be {\em stable (SaYD)} if, 
for all $m \in M$, one has
\begin{equation}
\label{Eq:SaYD}
m_{(0)}m_{(-1)} = m.
\end{equation}
\end{dfn}

\subsubsection{Cyclic (co)homology}
\label{almeria}
We will not recall the formalism of cyclic (co)homology in
full detail; see, e.g., \cite{FeiTsy:AKT, Lod:CH} for more information. However,
recall that 
para-(co)cyclic $\kk$-modules generalise 
(co)cyclic $\kk$-modules by dropping the
condition that the (co)cyclic operator
implements an action of
$\mathbb{Z}/(n+1)\mathbb{Z}$ on the
degree $n$ part. Thus a para-cyclic
$\kk$-module is a simplicial
$\kk$-module
$(C_\bull,d_\bull,s_\bull)$ 
and a para-cocyclic $\kk$-module is a
cosimplicial $\kk$-module
$(C^\bull,\delta_\bull,\sigma_\bull)$,
together with $\kk$-linear maps 
$t_n : C_n \rightarrow C_n$ resp.~$\tau_n : C^n \rightarrow C^n$
satisfying, respectively
\begin{equation}\label{belleville}
\!\!\!\!\!\!\!\!
\begin{array}{cc}
\begin{array}{rcl}
d_i \circ t_n  &\!\!\!\!\!\!=&\!\!\!\!\!\! \left\{\!\!\!
\begin{array}{ll}
t_{n-1} \circ d_{i-1} 
& \!\!\!\! \mbox{if} \ 1 \leq i \leq n, \\
 d_n & \!\!\!\! \mbox{if} \
i = 0,
\end{array}\right. \\
\\
s_i \circ t_n &\!\!\!\!\!\!=&\!\!\!\!\!\! \left\{\!\!\!
\begin{array}{ll}
t_{n+1} \circ s_{i-1} & \!\!\!\!
\mbox{if} \ 1 \leq i \leq n, \\
 t^2_{n+1} \circ s_n
 & \!\!\!\! \mbox{if} \
i = 0, \\
\end{array}\right.
\end{array}
\!\!\!\!&\!\!\!\!
\begin{array}{rcll}
\tau_n \circ \gd_i &\!\!\!\!\!=&\!\!\!\!\! \left\{\!\!\!
\begin{array}{l}
\gd_{i-1}\circ \tau_{n-1} \\
 \gd_n 
\end{array}\right. & \!\!\!\!\!\!\!\!\! \begin{array}{l} \mbox{if} \ 1
\leq i \leq n, 
 \\ \mbox{if} \ i = 0,
 \end{array} \\
\\
\tau_n \circ \sigma_i &\!\!\!\!\!=&\!\!\!\!\! \left\{\!\!\!
\begin{array}{l}
\sigma_{i-1} \circ \tau_{n+1} \\
 \sigma_n \circ \tau^2_{n+1} 
\end{array}\right. & \!\!\!\!\!\!\!\!\!
\begin{array}{l} \mbox{if} \ 1 \leq i
 \leq n,  
\\ \mbox{if} \ i = 0. \end{array} 
\end{array}
\end{array}
\end{equation}
Such a para-(co)cyclic module is called {\em (co)cyclic} if $t^{n+1}_n = \id$ (resp.\ $\tau^{n+1}_n = \id$).
Any cyclic module $C_\bull$ 
gives rise to a cyclic bicomplex $C_{\bull\bull}$, see, e.g., \cite{FeiTsy:AKT} for details. The only thing we recall here is that the differential on the $b$-columns is given by
\begin{equation}
\label{Eq:b}
b = \sum_{i=0}^{n} (-1)^i\, d_i, 
\end{equation}
and likewise $\beta := \sum^{n+1}_{i=0} (-1)^i \gd_i$ for a cocyclic module.

\subsubsection{The para-(co)cyclic module associated to a left Hopf algebroid 
{\rm (\cite{KowKra:CSIACT}, cf.~also \cite{KowPos:TCTOHA})}}\label{ssubsect:CUM}

Let $M$ be simultaneously a left $U$-comodule and a right $U$-module with compatible left $R$-action as in \rmref{campanilla1}.
Set
$$
C_\bull(U,M) \, := \,\,M \otimes_\Ropp  (\due U \blact \ract)^{\otimes_\Ropp  \bull},
$$
and in each degree $n$ define the following structure maps on it:
\!\!\!\!
\begin{equation}
\label{adualnightinpyongyang}
\!\!\!
\begin{array}{rcll}
d_i(m \otimes_\Ropp x)  &\!\!\!\! =&\!\!\!\! 
\left\{ \!\!\!
\begin{array}{l}
m \otimes_\Ropp u^1  \otimes_\Ropp   \cdots   \otimes_\Ropp   \big(\varepsilon(u^n) \blact u^{n-1}\big)
\\
m \otimes_\Ropp \cdots \otimes_\Ropp  (u^{n-i} u^{n-i+1})
 \otimes_\Ropp  \cdots \otimes_\Ropp u^n
\\
(mu^1) \otimes_\Ropp u^2  \otimes_\Ropp   \cdots    \otimes_\Ropp  
u^n 
\end{array}\right.  & \!\!\!\!\!\!\!\!\!\! \,  \begin{array}{l} \mbox{if} \ i \!=\! 0, \\ \mbox{if} \ 1
\!  \leq \! i \!\leq\! n-1, \\ \mbox{if} \ i \! = \! n, \end{array} \\
\ \\
s_i(m \otimes_\Ropp x) &\!\!\!\! =&\!\!\!\!  \left\{ \!\!\!
\begin{array}{l} m  \otimes_\Ropp   u^1  
\otimes_\Ropp   \cdots   \otimes_\Ropp
 u^n  \otimes_\Ropp 1
\\
m \otimes_\Ropp \cdots \otimes_\Ropp   u^{n-i} 
\otimes_\Ropp   1  
\otimes_\Ropp   u^{n-i+1}  \otimes_\Ropp
 \cdots  \otimes_\Ropp u^n
\\
m \otimes_\Ropp 1 
\otimes_\Ropp u^1  \otimes_\Ropp   \cdots    \otimes_\Ropp  u^n 
\end{array}\right.   & \!\!\!\!\!\!\!\!\!  \begin{array}{l} 
\mbox{if} \ i\!=\!0, \\ 
\mbox{if} \ 1 \!\leq\! i \!\leq\! n-1, \\  \mbox{if} \ i\! = \!n, \end{array} \\
\ \\
t_n(m \otimes_\Ropp x) 
&\!\!\!\!=&\!\!\!\!
(m_{(0)} u^1_+) \otimes_\Ropp u^2_+ \otimes_\Ropp  \cdots  \otimes_\Ropp u^n_+ \otimes_\Ropp (u^n_- \cdots u^1_- m_{(-1)}),  
& \\
\end{array}
\end{equation}
where we abbreviate $x:=u^1 \otimes_\Ropp \cdots \otimes_\Ropp u^n$. 
As explained in detail in \cite{KowKra:CSIACT}, this cyclic module is the generalised ``cyclic dual'' to the following cocyclic module:
set
$$
C^\bull(U,M) \, := \,\, (\due U \lact \ract)^{\otimes_\erre  \bull} \otimes_\erre M,
$$
with structure maps in degree $n$ given by
\begin{equation}
\!\! \begin{array}{rll}
\label{anightinpyongyang}
\gd_i(z \otimes_\erre m) \!\!\!\!&= \left\{\!\!\!
\begin{array}{l} 1
\otimes_\erre u^1 \otimes_\erre \cdots
 \otimes_\erre u^n \otimes_\erre m  
\\ 
u^1 \otimes_\erre \cdots \otimes_\erre \Deltaell (u^i) \otimes_\erre \cdots
 \otimes_\erre u^n \otimes_\erre m
\\
u^1 \otimes_\erre \cdots \otimes_\erre u^n \otimes_\erre m_{(-1)} \otimes_\erre m_{(0)} 
\end{array}\right. \!\!\!\!\!\!\!\! 
& \!\! \hspace*{-2cm}  \begin{array}{l} \mbox{if} \ i=0, \\ \mbox{if} \
  1 \leq i \leq n, \\ \mbox{if} \ i = n + 1,  \end{array} \\
\\
\gd_j(m) \!\!\!\! &= \left\{ \!\!\!
\begin{array}{l}
		  1
		  \otimes_\erre m  \quad
\\
m_{(-1)} \otimes_\erre m_{(0)}  \quad 
\end{array}\right. & \!\! \hspace*{-2cm} 
\begin{array}{l} \mbox{if} \ j=0, \\ \mbox{if} \
  j = 1 ,  \end{array} \\
\\
\gs_i(z \otimes_\erre m) \!\!\!\! 
&= u^1 \otimes_\erre \cdots \otimes_\erre
{\varepsilon} (u^{i+1}) \otimes_\erre \cdots \otimes_\erre u^n \otimes_\erre m & \! \,
\hspace*{1pt} \hspace*{-2cm}  0 \leq i \leq n-1,
\\ 
\\
\tau_n(z \otimes_\erre m) \!\!\!\! 
&= u^1_{-(1)}u^2 \otimes_\erre \cdots \otimes_\erre u^1_{-(n-1)}u^n\otimes_\erre u^1_{-(n)}m_{(-1)} \otimes_\erre m_{(0)}u^1_+, & 
\end{array}
\end{equation}
where we abbreviate $z:=u^1 \otimes_\erre
\cdots \otimes_\erre u^n$. 

In \cite{KowKra:CSIACT} it was shown that, under the minimal assumption \rmref{campanilla1}, the maps \rmref{adualnightinpyongyang} (resp.~\rmref{anightinpyongyang}) 
give rise to a para-cyclic (resp.~para-cocylic) module, which is cyclic (resp.~cocyclic) if $M$ is SaYD, i.e., additionally fulfills \rmref{campanilla2}--\rmref{Eq:SaYD}.

Let us denote by $H_\bull(U,M)$ and $HC_\bull(U,M)$ the resulting simplicial and cyclic homology groups of $C_\bull(U,M)$, and likewise by  $H^\bull(U,M)$ and $HC^\bull(U,M)$ the resulting simplicial and cyclic cohomology groups of $C^\bull(U,M)$.

\section{$\sqrt{\mbox{Morita}}$ theory and Morita base change Hopf algebroids}
In this section, we first recall some  general facts about Morita contexts and their induced $\sqrt{\mbox{Morita}}$ theory   in the sense of Takeuchi \cite{Tak:squareMorita}.  Secondly, we explain how this theory 
was used by Schauenburg to introduce Morita base change (left) Hopf algebroids in \cite{Schau:MBCIQG}. 
In order to establish our main result, we explicitly give here the structure maps of  Schauenburg's Morita  base change (left) Hopf algebroids.
From now on, the unadorned symbol $\otimes$  stands for the tensor product over $\kk$, the commutative ground ring.

\subsection{Morita contexts}
\label{ssec: context}
Let $R$ and $S$ be two rings and let ${}_SP_R$ and ${}_RQ_S$ be two bimodules, together with the following bimodule isomorphisms:
\begin{equation}\label{Eq: Morita maps}
\begin{array}{rclrcl}
\phi: P \otimes_\erre Q &{\overset{\simeq}{\longrightarrow}}& S, & \phi^{-1}(1_\esse) &=& \sum p'_j \otimes_\erre q'_j, \\
\psi: Q \otimes_\esse P &{\overset{\simeq}{\longrightarrow}}& R, & \psi^{-1}(1_\erre) &=& \sum q_i \otimes_\esse p_i.
\end{array}
\end{equation}
It is known from Morita theory (see, e.g., \cite[p.\ 60]{Bas:AKT}) that, up to natural isomorphisms, $\phi$ and $\psi$ can be chosen in such a way that 
\begin{equation}
\label{energiaproxima}
(\phi\tensor{S}P) \,\,=\,\, (P\tensor{R}\psi) \quad \text{ and } \quad
(\psi\tensor{R}Q) \,\,=\,\, (Q\tensor{S}\phi).
\end{equation}
Thus $(R,S,P,Q,\phi,\psi)$ can be considered as a Morita context. In what follows, 
we will usually make use of the notation
$$
p'q' := \phi(p' \otimes_\erre q')  \quad \mbox{and} \quad qp := \psi(q \otimes_\esse p),\quad \forall \, p,p' \in P,\,\, q,q' \in Q.
$$
We then have 
$$
\sum_{j} p_j' \, q_j' \,\,=\,\, 1_S, \quad \quad \sum_{i} q_i \, p_i \,\,=\,\, 1_R,
$$
as well as
$$
a(bp) \,\,=\,\, (ab)p \quad \mbox{in} \quad {}_SP_R, \quad \quad
b(aq) \,\,=\,\, (ba) q \quad \mbox{in} \quad {}_RQ_S,
$$
for all pairs of elements $a, p \in P$ and $b,q \in Q$.

The above context is canonically extended to a Morita context between the enveloping rings $\Reh$ and $\Se$. 
That is, $(\Reh,\Se,\Pe,\Qe,\phi^{\rm e},\psi^{\rm e})$ is a Morita context as well, 
where the underlying bimodules are defined by 
$$
\begin{array}{rcl}
\Pe :=  P \otimes Q^\op &\in& \due \Mod \Se \Reh, \\
\Qe :=  Q \otimes P^\op &\in& \due \Mod \Reh \Se. 
\end{array}
$$
Here ${}_{R^\op}P^\op_{S^\op}$ and ${}_{S^\op}Q^\op_{R^\op}$ are the opposite 
bimodules, and $\phi^{\rm e},\psi^{\rm e}$ are the obvious maps. As was argued in \cite{Tak:squareMorita}, this is an induced $\sqrt{\mbox{Morita}}$ equivalence between $R$ and $S$, in the sense that the last context induces a monoidal equivalence between the monoidal categories of bimodules $ \due \Mod R R$ and $ \due \Mod S S$. 
Explicitly, such a monoidal equivalence is set up by the following functors
$$
\xymatrix@C=80pt{ \due \Mod R R \simeq {}_{\Reh}\Mod  \ar@<.5ex>[r]^-{\Pe\tensor{\Ree}-} &  {}_{\Se}\Mod \simeq \due \Mod S S.  \ar@<.5ex>[l]^-{\Qe\tensor{\Se}-} }
$$
One of the monoidal structure maps of the functor $\Qe\tensor{\Se}-$ is explicitly given by the following natural isomorphism
\begin{equation}\label{Eq:Phi}
\begin{array}{rcl}
\lr{\Qe\tensor{\Se}X}\tensor{R} \lr{\Qe\tensor{\Se}Y} &\overset{\simeq}{\lra}& \Qe\tensor{\Se}(X\tensor{S}Y), \\ 
\lr{(q\tensor{}p^o) \tensor{\Se} x} \tensor{R} \lr{(b\tensor{}a^o) \tensor{\Se} y}  &\lma& (q\tensor{}a^o) \tensor{\Se} \lr{x(pb)\tensor{S}y}, \\ 
\sum_j \lr{(q\tensor{}p'_j{}^o)\tensor{S}x}\tensor{R}\lr{(q_j'{}\tensor{}p^o)\tensor{S}y} &\longmapsfrom& (q\tensor{}p^o)\tensor{\Se}(x\tensor{S}y).
\end{array}
\end{equation}
An alternative way of defining these functors is via the following natural isomorphisms:
$$
\Qe\tensor{\Se}- \simeq Q\tensor{S}-\tensor{S}P, \qquad \Pe\tensor{\Ree}- \simeq P\tensor{R}-\tensor{R}Q.
$$
Repeating  the same process, we end up with two mutually inverse functors (up to natural isomorphisms)
$$
\xymatrix@C=100pt{ \due \Mod \Reh \Reh  \ar@<.5ex>[r]^-{\Pe\tensor{\Ree}(-)\tensor{\Ree}\Qe} &   \due \Mod \Se \Se.  \ar@<.5ex>[l]^-{\Qe\tensor{\Se}(-)\tensor{\Se}\Pe} }
$$
Using the Morita context, this equivalence is canonically lifted to the category of monoids. 
Thus, if we  denote by $\Rings{\Reh}$ the category of $\Reh$-rings, i.e., algebra extensions of $\Reh$, 
we have a commutative diagram
$$
\xymatrix@C=90pt@R=40pt{\Rings{\Reh}\ar@<.5ex>[rr]^-{\Pe\tensor{\Ree}(-)\tensor{\Ree}\Qe} \ar@{->}_-{\Scr{O}_R}[d] & &   \Rings{\Se}  \ar@{->}^-{\Scr{O}_S}[d] \ar@<.5ex>[ll]^-{\Qe\tensor{\Se}(-)\tensor{\Se}\Pe}  \\  \due \Mod \Reh \Reh  \ar@<.5ex>[rr]^-{\Pe\tensor{\Ree}(-)\tensor{\Ree}\Qe} & &   \due \Mod \Se \Se,  \ar@<.5ex>[ll]^-{\Qe\tensor{\Se}(-)\tensor{\Se}\Pe}  }
$$
whose vertical arrows are the forgetful functors.  For any $\Reh$-ring $T$ we then have  
functors connecting the categories of left modules:
\begin{equation}\label{Eq:modules}
\xymatrix@C=90pt@R=40pt{{}_T\Mod\ar@<.5ex>[rr]^-{\Pe\tensor{\Ree}(-)} \ar@{->}_-{\Scr{F}}[d] & &   {}_{\Pe\tensor{\Ree}T\tensor{\Ree}\Qe}\Mod  \ar@{->}^-{\Scr{F}'}[d] \ar@<.5ex>[ll]^-{\Qe\tensor{\Se}(-)}  \\   {}_{\Reh}\Mod   \ar@<.5ex>[rr]^-{\Pe\tensor{\Ree}(-)} & &   {}_{\Se}\Mod.  \ar@<.5ex>[ll]^-{\Qe\tensor{\Se}(-)}  }
\end{equation}

\subsection{Morita base change for left bialgebroids}
\label{subsect:Morita}
In \cite{Schau:MBCIQG}, Schauenburg used one of these functors to construct a functor 
from the category of left Hopf algebroids over $R$ to the category of left Hopf algebroids over $S$, known as 
\emph{Morita base change left Hopf algebroids}. 
In what follows, 
we will need an explicit description of this Morita base change left Hopf algebroid structure. So, it will be convenient to review this construction in more detail.

Let $(R,S,P,Q,\phi,\psi)$ be a Morita context. As one can realise from diagram \eqref{Eq:modules},  the following two assertions are equivalent: 
\begin{enumerate}
\item[(i)] the category of $T$-modules  is a monoidal category and the forgetful functor  $\Scr{F}$ is strict monoidal; 
\item[(ii)] the category of $(\Pe\tensor{\Ree}T\tensor{\Ree}\Qe)$-modules  is a monoidal category and the forgetful functor  $\Scr{F}'$ is strict monoidal.
\end{enumerate}
Therefore, by Schauenburg's result \cite[Theorem 5.1]{Schau:BONCRAASTFHB},  starting with a left Hopf algebroid $(R, U)$ we can construct a new one $(S, \tilde{U})$ as follows. Denote by 
$$
\tilde{U} := \Pe \otimes_\Ree U \otimes_\Ree \Qe
$$
the image of $U$. Using the natural isomorphism \eqref{Eq:Phi} and the diagram \eqref{Eq:modules} for the underlying $\Reh$-ring $U$, we can compute the structure maps of the left Hopf algebroid $(S,\tilde{U})$:
\begin{enumerate}
\item {\em Source and target.}
Source and target are given by 
\begin{equation}
\label{Eq:tilde_eta} 
\begin{array}{rcl}
\tilde{\eta}: \quad \Se &\lra & \Pe \otimes_\Ree U \otimes_\Ree \Qe, \\
s \otimes \td{s}^o  &\lma & \sum_{i,\, j} (sp'_j \otimes {q'_i}^o) \otimes_\Ree 1_{\uhhu} \otimes_\Ree (q'_j \otimes {(\td{s}p'_i)}^o).  
\end{array}
\end{equation}

\item {\em Ring structure.}
The multiplication in $\tilde{U}$ is given by
\begin{equation}
\label{Eq:tilde_mu}
\begin{array}{rcl}
\tilde{\mu}: \tilde{U} \otimes_\See \tilde{U} &\lra & \tilde{U}, \\
\td{u} \tensor{\See} \td{v}  &\lma & (a_1 \otimes b_1^o) \tensor{\Ree} \lr{\big(u \bract (c_1a_2)\big)\big((b_2d_1) \lact v \big) }
\tensor{\Ree} (c_2 \otimes d_2^o),
\end{array}
\end{equation}
where $\td{u}:= \lr{(a_1 \otimes b_1^o) \tensor{\Ree} u \tensor{\Ree} (c_1 \otimes d_1^o)}$ and $\td{v}:= \lr{(a_2 \otimes b_2^o) \tensor{\Ree} v \tensor{\Ree} (c_2 \otimes d_2^o)}$.
The identity element is given by the image $\td{\eta}(1_\See)$:
$$
1_\See \lma \sum (p'_j \otimes {q'_i}^o) \otimes_\Ree 1_{\uhhu} \otimes_\Ree (q'_j \otimes {p'_i}^o). 
$$
\item {\em Coring structure.}
The comultiplication is given by 
\begin{equation}
\label{Eq:tilde_Delta}
\begin{small}
\begin{array}{rcl}
\tilde{\gD}: \tilde{U} &\lra&  \tilde{U} \otimes_\esse \tilde{U}, \\
 \td{u}  &\lma& \underset{i,\, j}{\sum} \lr{(a \otimes q_i^o) \otimes_\Ree u_{(1)} \otimes_\Ree (c \otimes {p'_j}^o)}  
\otimes_{\esse} \lr{(p_i \otimes b^o) \otimes_\Ree u_{(2)} \otimes_\Ree (q'_j \otimes d^o)},
\end{array}
\end{small}
\end{equation}
where $\td{u}:= \lr{(a \otimes b^o) \tensor{\Ree} u \tensor{\Ree} (c \otimes d^o)}$,
and the counit is given by
\begin{equation}
\label{Eq:tilde_epsilon}
\tilde{\varepsilon}: \tilde{U} \longrightarrow S, \quad \td{u} \longmapsto a\varepsilon(u \bract (cd))b.
\end{equation}
\item {\em The left Hopf structure.}
The explicit expression for the translation map reads
\begin{equation}
\label{Eq:tilde_beta}
\begin{small}
\begin{array}{rcl}
\tilde{\gb}^{-1}: \tilde{U}  &\lra & \tilde{U} \otimes_\Sopp \tilde{U}, \\
\td{u}  &\lma & \underset{i,\, j}{\sum}\lr{(a \otimes {q'_j}^o) \otimes_\Ree u_+ \otimes_\Ree (c \otimes p_i^o)}
\otimes_{\Sopp} \lr{(d \otimes q_i^o) \otimes_\Ree u_- \otimes_\Ree (b \otimes {p'_j}^o)},  
\end{array}
\end{small}
\end{equation}
where again $\td{u}:= \lr{(a \otimes b^o) \tensor{\Ree} u \tensor{\Ree} (c \otimes d^o)}$.

\end{enumerate}

\subsection{$\tilde{U}$-modules and $\tilde{U}$-comodules}
\label{ssection:mod comod} 
Consider the diagram 
analogous to \eqref{Eq:modules} for right $U$-modules.
The functor of the first column in that diagram is explicitly given on objects as follows. 
For $M \in \Mod_{U}$, the right $\td{U}$-module $\tilde{M} := P \otimes_R M \otimes_R Q$ is equipped with the following action: 
denote 
$$
\tilde{m} := p\tensor{R}m\tensor{R}q \in \td{M}\quad \text{ and }\quad \td{u} := (a\tensor{}b^o)\tensor{\Ree}u\tensor{\Ree} (c\tensor{}d^o) \in \td{U},
$$
and define
\begin{equation}\label{Eq:act_tildeM}
\td{m} \td{u} := 
d \otimes_\erre \big((bp) \blact m \bract (qa)\big) u \otimes_\erre c.
\end{equation}
As shown in \cite{Schau:MBCIQG}, there is also a monoidal equivalence connecting  
the categories of left  comodules. 
More precisely,  if $M \in {}_U\bf Comod$, then $\tilde{M}$ is a left $\tilde{U}$-comodule with coaction
\begin{equation}\label{Eq:coact_tildeM}
\gD_{\tilde{\emme}}(\td{m}) := \sum_{i,\, j} \big((p \otimes q_i^o) \otimes_\Ree m_{(-1)} \otimes_\Ree (q \otimes {p'_j}^o)\big) \otimes_\esse (p_i \otimes_\erre m_{(0)} \otimes_\erre q'_j),
\end{equation}
which exactly coincides with the formula given in \cite{Schau:MBCIQG} in the special case where the left module $\due U \lact {}$ is finitely generated projective.

\begin{lemma}
\label{Lemma:I}
Let $M$ be a right $U$-module and left $U$-comodule. Then $M$ is aYD (resp. SaYD)
if and only if $\tilde{M}$ is.
\end{lemma}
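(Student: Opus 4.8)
The plan is to verify the defining conditions of an aYD (resp.~SaYD) module for $\tilde M = P\otimes_\erre M\otimes_\erre Q$ directly from the explicit structure maps, using the Morita context identities $\sum_j p_j'q_j'=1_S$ and $\sum_i q_ip_i=1_R$ as the main computational tool. Since both the right $\tilde U$-action \eqref{Eq:act_tildeM} and the left $\tilde U$-coaction \eqref{Eq:coact_tildeM} are given by concrete formulas in terms of the original action and coaction on $M$, each required identity for $\tilde M$ should reduce, after rewriting, to the corresponding identity for $M$. Because all constructions are symmetric under the Morita context (the functors $\Pe\tensor{\Ree}(-)\tensor{\Ree}\Qe$ and $\Qe\tensor{\See}(-)\tensor{\See}\Pe$ being mutually inverse up to natural isomorphism), it suffices to prove one direction; the converse then follows by applying the same argument to the inverse functor, recovering $M\simeq Q\tensor{S}\tilde M\tensor{S}P$.

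First I would treat the two $\Se$-module structures on $\tilde M$, i.e.~conditions \eqref{campanilla1}--\eqref{campanilla2}. The left $S$-action and right $S$-action on $\tilde M$ coming from its right $\tilde U$-module structure are computed by feeding $\td u = \td\eta(s\otimes\td s^o)$ from \eqref{Eq:tilde_eta} into \eqref{Eq:act_tildeM} and simplifying with $\sum_j p_j'q_j'=1_S$ and $\sum_i q_ip_i=1_R$; the $S$-action coming from the $\tilde U$-comodule structure is computed from \eqref{grimm} applied to $\tilde U$, i.e.~using $\tilde\varepsilon$ from \eqref{Eq:tilde_epsilon} together with the coaction \eqref{Eq:coact_tildeM}. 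In each case I expect the telescoping of the context sums to leave exactly the corresponding $\blact,\bract$-actions on $M$, so that the coincidence \eqref{campanilla1}--\eqref{campanilla2} for $\tilde M$ follows from that for $M$.

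Next I would check the central compatibility \eqref{huhomezone} between action and coaction for $\tilde M$. The plan is to expand $\gD_{\tilde\emme}(\td m\,\td u)$ using \eqref{Eq:coact_tildeM} applied to the element $\td m\td u$ from \eqref{Eq:act_tildeM}, and independently expand the right-hand side $\td u_-\,\td m_{(-1)}\,\td u_{+(1)}\otimes_\esse \td m_{(0)}\td u_{+(2)}$ using the translation map \eqref{Eq:tilde_beta} and the comultiplication \eqref{Eq:tilde_Delta}. Here the key inputs are the original aYD identity \eqref{huhomezone} for $M$, together with Schauenburg's translation-map identities \eqref{Sch1}--\eqref{Sch5} for $U$, applied inside the tensor factors $u_\pm$, $u_{(1)},u_{(2)}$ that appear. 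Stability \eqref{Eq:SaYD} for $\tilde M$, namely $\td m_{(0)}\td m_{(-1)}=\td m$, is then the most straightforward: one computes $\td m_{(0)}\td m_{(-1)}$ by composing \eqref{Eq:coact_tildeM} with \eqref{Eq:act_tildeM}, collapses the context sums, and invokes $m_{(0)}m_{(-1)}=m$.

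The main obstacle I anticipate is bookkeeping in the compatibility \eqref{huhomezone}: the element $\tilde U$ carries four ``Morita legs'' $(a\otimes b^o),(c\otimes d^o)$ in addition to $u\in U$, and both \eqref{Eq:tilde_beta} and \eqref{Eq:tilde_Delta} reshuffle these legs against the summation indices $p_i,q_i,p_j',q_j'$. The delicate point is that the right-hand side of \eqref{huhomezone} mixes $\td u_-$, which carries its own $P$- and $Q$-legs, with the coaction of $\td m$, which carries still further context elements; matching these requires repeated use of \eqref{energiaproxima} and the associativity relations $a(bp)=(ab)p$, $b(aq)=(ba)q$ to slide scalars across the tensor products over $R$, $S$, and their enveloping rings, and one must be careful that each tensor leg lands over the correct ring $R$, $\Rop$, $S$, or $\Sop$. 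Once the leg-matching is organised cleanly, everything should reduce termwise to the corresponding identity for $M$, so the verification is in principle routine but notationally heavy.
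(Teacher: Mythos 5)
Your proposal matches the paper's own proof in all essentials: the paper likewise reduces to one implication by the symmetry of the Morita base change equivalences, verifies \eqref{campanilla1}--\eqref{campanilla2} for $\tilde M$ by feeding $\tilde\eta(t\otimes s^o)$ into the action \eqref{Eq:act_tildeM} and telescoping the context sums, establishes \eqref{huhomezone} by expanding $\gD_{\tilde\emme}(\td m\,\td u)$ and $\td u_-\td m_{(-1)}\td u_{+(1)}\otimes_\esse\td m_{(0)}\td u_{+(2)}$ via \eqref{Eq:coact_tildeM}, \eqref{Eq:tilde_Delta}, \eqref{Eq:tilde_beta} and collapsing indices using \eqref{panuelos}, \eqref{douceuretresistance}, \eqref{Sch3}, \eqref{maotsetung}, and treats stability analogously. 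The anticipated ``leg-matching'' bookkeeping is exactly what the paper's displayed computation carries out, so the plan is correct and essentially identical in strategy.
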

\begin{proof}
It is sufficient to prove, say, the direct implication as the opposite direction then follows at once since both directions in the Morita base change induced equivalence between $U$-modules and $\td{U}$-modules as well as in the induced equivalence between $U$-comodules and $\td{U}$-comodules work the same way. 

So assume that $M$ is aYD.
Then, for any $s, t \in S$, we have 
\begin{eqnarray*}
 \td{m}\, \td{\eta}( t\tensor{}s^o) &=& \sum_{i,\, j}
 \Big(p\tensor{R}m\tensor{R}q \Big) \, \Big( ((t p'_j)\tensor{}q'_j{}^o) \tensor{\Ree}1_{U}
\tensor{\Ree} (q_j'\tensor{}(sp_i')^o) \Big) \\ &\overset{\eqref{Eq:act_tildeM}}{=}& 
\sum_{i,\,j} (sp_i'\tensor{R}\lr{(q_i'p)\blact m\bract (qtp_j')}\tensor{R} q_j' 
\\ &\overset{{\eqref{campanilla1},\eqref{campanilla2}}}{=}&  \sum_{i,\,j} (sp_i'\tensor{R}\lr{(q_i'p) m (qtp_j')}\tensor{R} q_j' \\ &=& \sum_{i,\,j} (sp_i'q_i'p)\tensor{R}m\tensor{R}(qtp_j'q_j') \\ &=& (sp)\tensor{R}m\tensor{R}(qt) \,\, =\,\, s\td{m}t,
\end{eqnarray*}
which gives \eqref{campanilla1} and \eqref{campanilla2} for $\td{M}$. 
Now, let us show  \eqref{huhomezone} for $\td{M}$, and
start with $\td{m}\td{u}\,=\, d\tensor{R}\big((bp)\blact m \ract(qa)\big)u\tensor{R} c$ as defined in \eqref{Eq:act_tildeM}. 
Once computed the coaction of the middle term in the latter tensor product 
and taking into account \eqref{huhomezone} for $M$,  apply \eqref{Eq:coact_tildeM} to obtain
\begin{footnotesize}
$$
\Delta_{\td{\emme}}(\td{m}\td{u}) = \sum_{i,\, j} \LR{(d\tensor{}q_i^o)\tensor{\Ree}\lr{(u_-\bract(bp))(m_{(-1)}\bract(qa))u_{+(1)}} \tensor{\Ree} (c\tensor{}p_j'{}^o)} \tensor{S} \lr{p_i\tensor{R}m_{(0)}u_{+(2)}\tensor{R}q_j'}.
$$
\end{footnotesize}
On the other hand, using \eqref{Eq:tilde_Delta} and \eqref{Eq:tilde_beta}, 
we get 
\begin{footnotesize}
\begin{equation*}
\begin{split}
\td{u}_- & \td{m}_{(-1)}\td{u}_{+(1)}\tensor{S}\td{m}_{(0)}\td{u}_{+(2)} \\
& = \sum_{i_0, i_1, i_2; \, j_0, j_1,j_2} \LR{ (d\tensor{}q_{i_1}^o)\tensor{\Ree}\Big(((q_{i_0}p_{j_1}')\blact u_-\bract(bp))((q_{i_2}p_{j_0}')\blact m_{(-1)} \bract (qa))u_{+(1)}\Big)\tensor{\Ree} (c\tensor{}p_{j_2}'{}^o)} \\ & \qquad \qquad
 \tensor{S}\LR{ p_{i_1}\tensor{R} \Big( (m_{(0)}\bract(q'_{j_0}p_{i_2}))((q'_{j_1}p_{i_0})\blact u_{+(2)})\Big)\tensor{R}q_{j_2}' } \\
& \!\! \overset{\eqref{douceuretresistance},\eqref{campanilla2}}{=}  
\sum_{i_0, i_1; \,  j_1,j_2} \LR{ (d\tensor{}q_{i_1}^o)\tensor{\Ree}\Big(((q_{i_0}p_{j_1}')\blact u_-\bract(bp))(m_{(-1)} \bract (qa))u_{+(1)}\Big)\tensor{\Ree} (c\tensor{}p_{j_2}'{}^o)} \\ & \qquad \qquad
 \tensor{S}\LR{ p_{i_1}\tensor{R} \Big( (m_{(0)})((q'_{j_1}p_{i_0})\blact u_{+(2)})\Big)\tensor{R}q_{j_2}' }
\\ 
& \!\! \overset{\eqref{panuelos}, \rmref{Sch3}}{=}  
\sum_{i_1; \, j_2} \LR{ (d\tensor{}q_{i_1}^o)\tensor{\Ree}\lr{(u_-\bract (bp))(m_{(-1)} \bract (qa))u_{+(1)}}\tensor{\Ree} (c\tensor{}p_{j_2}'{}^o)} \tensor{S}\LR{ p_{i_1}\tensor{R} \Big( m_{(0)}u_{+(2)}\Big)\tensor{R}q_{j_2}' } \\
&= \Delta_{\td{\emme}}(\td{m}\td{u}),  
\end{split}
\end{equation*}
\end{footnotesize}
where in the last equality we used \rmref{maotsetung} along with \rmref{campanilla1}--\rmref{huhomezone}. 
Analogously one checks the stability condition for $\td{M}$.
\end{proof}

\section{Morita base change invariance in Hopf-cyclic (co)homology}
This section contains our main results, Theorems \ref{thm:main1} \& \ref{thm:main2}. More precisely, 
we construct two morphisms between the cyclic modules $C_{\bull}(U, M)$ and $C_{\bull}(\tilde{U}, \tilde{M})$, 
where $(S, \tilde{U})$ is a Morita base change of $(R,U)$, and show that they form quasi-isomorphisms 
by giving an explicit homotopy. This establishes the Morita base change invariance for cyclic homology.  
For the Morita base change invariance of cyclic {\em co}homology, we follow the same path although we shall not give the proofs  
since they are similar to the homology case. 

Fix a Morita context $(R, S, P, Q, \phi, \psi)$ and assume we are given a left Hopf algebroid $(R, U)$, with 
Morita base change left Hopf algebroid  $(S, \tilde{U})$ as constructed in \S\ref{subsect:Morita}. 
Recall the notation of \S\ref{ssection:mod comod}, and from now on, 
the symbol  $i_{0,\ldots,n}$ stands for the set of indices $\{i_0,\cdots,i_n\}$.

\subsection{The homology case}
Consider the cyclic module $\big(C_\bull(U,M), d_\bull, s_\bull, t_\bull\big)$ as in \rmref{adualnightinpyongyang}.

\begin{lemma}
\label{Lemma:II}
Let  $M$ be a right $U$-module left
 $U$-comodule, subject to both \eqref{campanilla1} and \eqref{campanilla2}. 
Then the cyclic operator $\tilde{t}: C_n(\tilde{U}, \tilde{M}) \to C_n(\tilde{U}, \tilde{M})$ 
for the left Hopf algebroid $\tilde{U}$ with coefficients in $\td{M}$ is explicitly given by
\begin{multline*}
\tilde{t}: \tilde{m} \otimes_\Sopp \tilde{x} \lma \\ 
 \underset{i_{1,\ldots,n}}{\sum}\lr{p_{i_1}\tensor{R}m_{(0)}u^1_{+}\tensor{R}c_1}\tensor{S^\op}
\lr{(a_2\tensor{}q_{i_1}^o)\tensor{\Ree}u^2_{+}\tensor{\Ree} (c_2\tensor{}p_{i_2}^o)}\tensor{S^\op}  \\ \cdots\tensor{S^\op} \lr{(a_n\tensor{}q_{i_{n-1}}^o)\tensor{\Ree}u^n_{+}\tensor{\Ree}(c_n\tensor{}p_{i_n}^o)} \tensor{S^\op}  \\ \LR{(d_n\tensor{}q_{i_n}^o) \tensor{\Ree}\LR{(u^n_-\bract (b_nd_{n-1}))(u^{n-1}_-\bract (b_{n-1}d_{n-2}))\cdots (u^1_{-}\bract (b_1p)) \, m_{(-1)}}\tensor{\Ree}(q\tensor{}a_1^o)},
\end{multline*}
using the notation $\td{m} := p\tensor{R}m \tensor{R}q \in \td{M}$ as well as 
$\td{x}:= \td{u}^1\tensor{S^\op}\cdots\tensor{S^\op}\td{u}^{n}$, where $\td{u}^{k} := (a_k\tensor{}b_k^o)\tensor{\Ree}u^k\tensor{\Ree}(c_k\tensor{}d_k^o)$ for $1 \leq k \leq n$.
\end{lemma}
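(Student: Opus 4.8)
The plan is to substitute Schauenburg's explicit structure maps into the general formula for the cyclic operator of a left Hopf algebroid and then simplify. By the last line of \eqref{adualnightinpyongyang}, applied to $(S,\td{U})$ with coefficients in $\td{M}$---which satisfies the analogues of \eqref{campanilla1}--\eqref{campanilla2} by the first computation in the proof of Lemma~\ref{Lemma:I}, so that $\td{t}$ is well defined---one has
$$
\td{t}(\td{m} \tensor{S^\op} \td{x}) \,=\, (\td{m}_{(0)}\,\td{u}^1_+) \tensor{S^\op} \td{u}^2_+ \tensor{S^\op} \cdots \tensor{S^\op} \td{u}^n_+ \tensor{S^\op} (\td{u}^n_- \cdots \td{u}^1_-\, \td{m}_{(-1)}),
$$
where $\td{x} = \td{u}^1 \tensor{S^\op} \cdots \tensor{S^\op} \td{u}^n$. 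First I would record the ingredients entering this expression: the coaction $\td{m}_{(-1)}\tensor{S}\td{m}_{(0)}$ from \eqref{Eq:coact_tildeM}, the translation data $\td{u}^k_+ \tensor{S^\op} \td{u}^k_-$ from \eqref{Eq:tilde_beta} (each introducing two dual-basis indices, one paired through $\psi$ via labels $p^{o},q^{o}$ and one through $\phi$ via labels ${p'}^{o},{q'}^{o}$), the module action \eqref{Eq:act_tildeM} for the leading slot, and the multiplication \eqref{Eq:tilde_mu} for the trailing slot.

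The core of the argument is the trailing factor $\td{u}^n_- \cdots \td{u}^1_-\,\td{m}_{(-1)}$, an iterated product in $\td{U}$. Associating from the right and iterating \eqref{Eq:tilde_mu}, I would show that it telescopes: after multiplying in $\td{u}^k_-$ the leading $P$-label of the running product is $d_k$, so that at the next step the contraction term $c_1a_2$ of \eqref{Eq:tilde_mu} equals $b_{k+1}d_k$ and enters as $u^{k+1}_- \bract (b_{k+1}d_k)$; at the bottom the label $p$ of $\td{m}_{(-1)}$ yields $u^1_- \bract (b_1p)$, while $m_{(-1)}$ is carried along. The auxiliary $\lact$-terms $(b_2d_1)\lact v$ of \eqref{Eq:tilde_mu} carry the $Q$-type dual-basis labels of the $-$-parts together with the coaction labels and are disposed of at the simplification stage. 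This produces the chain $(u^n_- \bract (b_nd_{n-1}))\cdots (u^1_- \bract (b_1p))\,m_{(-1)}$ in the middle $U$-slot, with outer $P$-label $d_n$ inherited from $\td{u}^n_-$. In parallel the leading factor $\td{m}_{(0)}\td{u}^1_+$ is computed from \eqref{Eq:act_tildeM}, and the interior factors are read off directly from \eqref{Eq:tilde_beta}.

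It then remains to collapse the auxiliary summation indices produced by the $n$ translation maps and the coaction down to the single family $i_{1,\ldots,n}$. Here I would transport the scalar $R$-actions across the two tensor factors of $P\tensor{R}M\tensor{R}Q$ by means of \eqref{campanilla1}--\eqref{campanilla2}, push the dual-basis labels through the $\tensor{S^\op}$-balancing between consecutive tensor components (governed by the $\td{U}$-bimodule structure induced by the source and target \eqref{Eq:tilde_eta}), and finally cancel the paired dual bases by the Morita completeness relations $\sum_i q_ip_i = 1_R$ and $\sum_j p'_jq'_j = 1_S$ from \eqref{Eq: Morita maps}. I expect the bookkeeping to be the main obstacle: one has to check that the second $Q$-label $p^{o}$ of each $\td{u}^{k-1}_+$ pairs across the $S^\op$-tensor with the first $Q$-label ${q'}^{o}$ of $\td{u}^k_+$ so as to become the label $q_{i_{k-1}}^{o}$ of the asserted formula, that the outer $Q$-label of the trailing factor collapses to $a_1^{o}$, and that all remaining sums cancel against the completeness relations. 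The conceptual content is merely that each dual-basis pair introduced by a translation map is reabsorbed by one completeness relation, but keeping track of which label occupies which of the four slots of every tensor component through the telescoping product is delicate.
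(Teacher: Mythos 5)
Your skeleton is the same as the paper's proof: expand $\td{t}$ via \eqref{adualnightinpyongyang} applied to $(S,\td{U})$, substitute \eqref{Eq:act_tildeM}, \eqref{Eq:coact_tildeM}, \eqref{Eq:tilde_beta}, and iterate \eqref{Eq:tilde_mu}, then collapse the auxiliary dual-basis sums; your telescoping analysis of $\td{u}^n_-\cdots\td{u}^1_-\td{m}_{(-1)}$ (the contractions $b_{k+1}d_k$, the bottom term $u^1_-\bract(b_1p)$, the auxiliary factors carrying the remaining labels) is correct and is exactly what the paper computes. The gap lies in the mechanism you give for the index elimination. You propose to pair the dual bases by pushing labels ``through the $\tensor{\Sopp}$-balancing between consecutive tensor components'' and then using $\sum_i q_ip_i=1_\erre$, $\sum_j p'_jq'_j=1_\esse$. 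This cannot work, because the two occurrences of each index to be eliminated are not in adjacent factors: after substitution, the label $q'_{j_k}$ sits in the $\Pe$-part of the $(k-1)$-st factor, while its partner $p'_{j_k}$ sits attached to $u^k_-$ \emph{inside the last factor} (buried in the long product), with all the intermediate factors in between; likewise $q'_{j_0}$ is attached to $m_{(0)}$ in the zeroth factor while $p'_{j_0}$ sits in the $\Qe$-part of the last factor. The $\tensor{\Sopp}$-balancing of \eqref{tata} only identifies $s\blact\td{u}^k\tensor{\Sopp}\td{u}^{k+1}$ with $\td{u}^k\tensor{\Sopp}\td{u}^{k+1}\ract s$ between \emph{neighbours}, and these actions (multiplications by $\td{\eta}$-elements, which hit specific slots of each factor) do not commute past the intervening factors; moreover the buried labels are not even of the global form $\td{u}\ract s$ to which the balancing could be applied. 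So no sequence of such moves brings the two halves of a dual-basis pair together.

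What actually performs the elimination---and what the paper's proof invokes---are two ``action at a distance'' identities that you never mention. First, the Sweedler--Takeuchi property \eqref{Sch3} of the translation map of $U$: since $u^k_+\tensor{\Ropp}u^k_-$ arises from a single application of $\beta^{-1}$, one has $u^k_+\ract r\tensor{\Ropp}u^k_- = u^k_+\tensor{\Ropp}r\blact u^k_-$, which transports $(q_{i_{k-1}}p'_{j_k})$ from $u^k_-$ in the last factor onto $u^k_+$ in factor $k-1$; there the internal $\tensor{\Ree}$-balancing of $\td{U}$ pushes it into the $\Pe$-part, and $\sum_{j_k}p'_{j_k}q'_{j_k}=1_\esse$ kills the sum, turning $q'^{\,o}_{j_k}$ into $q^o_{i_{k-1}}$. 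Second, for the coaction index $j_0$, one needs \eqref{campanilla2} \emph{together with} \eqref{douceuretresistance} to move $(q'_{j_0}a_1)$ from $m_{(0)}$ (first factor) over to $m_{(-1)}$ (last factor). Without \eqref{Sch3} and \eqref{douceuretresistance} the sums over $i_0,j_0,j_1,\ldots,j_n$ cannot be collapsed and the computation stalls. A further wrinkle: you quote the auxiliary terms of \eqref{Eq:tilde_mu} literally as $(b_2d_1)\lact v$; with $\lact$ these labels end up attached to the $u^k_-$ via $\bract$, a position in which \eqref{Sch3} is useless (and with which $\td{\mu}$ would not even be unital). The multiplication the paper's proof actually uses carries $(b_2d_1)\blact v$, which is precisely what places the labels in the $\blact$-position that \eqref{Sch3} requires.
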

\begin{proof}
Eq.~\rmref{campanilla1} is not directly needed in the computation, but rather to make the operator $\tilde{t}$ well-defined.
By definition we know that 
$$
\td{t}(\td{m}\tensor{S^\op}\td{u}^1\tensor{S^\op}\cdots\tensor{S^\op}\td{u}^n)\,:=\,\td{m}_{(0)}\td{u}^1_{+} \tensor{S^\op}\td{u}^2_{+}\tensor{S^\op} \cdots \tensor{S^\op}\td{u}^n_{+}\tensor{S^\op} \lr {\td{u}^n_{-}\td{u}^{n-1}_- \cdots \td{u}^1_{-}\td{m}_{(-1)}}. 
$$ 
Using the formula for the translation map $\td{\beta}$ in \eqref{Eq:tilde_beta}, we have, along with Eqs.~\rmref{Eq:act_tildeM}, \rmref{Eq:coact_tildeM}, \rmref{salz}, and repeatedly using the multiplication formula \rmref{Eq:tilde_mu}
\begin{multline*}
\td{t}(\td{m}\tensor{S^\op}\td{u}^1\tensor{S^\op}\cdots\tensor{S^\op}\td{u}^n) \\ \,=\,
 \underset{\underset{i_{0,\ldots, n}}{j_{0,\ldots, n}}}{\sum} \Big(p_{i_1} \otimes_\erre \big((q'_{j_1}p_{i_0}) \blact m_{(0)} \bract (q'_{j_0}a_1)\big) u_{+}^1 \otimes_\erre c_1\Big)\tensor{S^\op}\lr{(a_2\tensor {}q_{j_2}'{}^o)\tensor{\Ree}u^2_{+}\tensor{\Ree}(c_2\tensor{}p_{i_2}^o)} \\ \tensor{S^\op} \cdots \tensor{S^\op}\lr{(a_{n}\tensor {}q_{j_{n}}'{}^o)\tensor{\Ree}u^{n}_{+}\tensor{\Ree}(c_{n}\tensor{}p_{i_n}^o)}  \tensor{S^\op} \left[\underset{}{} (d_{n}\tensor{}q_{i_n}^o)\tensor{\Ree} \right. \\ \Big[ \lr{(q_{i_{n-1}}p'_{j_n})\blact u^{n}_{-} \bract (b_nd_{n-1})} \lr{(q_{i_{n-2}}p'_{j_{n-1}})\blact u^{n-1}_{-} \bract (b_{n-1}d_{n-2})} \cdots \lr{(q_{i_{0}}p'_{j_{1}})\blact u^{1}_{-} \bract (b_{1}p)}m_{(-1)} \Big] \\ \tensor{\Ree}   
(q\tensor{}p'_{j_0}{}^o) \left. \underset{}{} \right] \\ \,=\,
 \underset{\underset{i_{0,\ldots, n}}{j_{0,\ldots, n}}}{\sum} \Big( p_{i_1}\tensor{R} \lr{(m_{(0)}\bract (q_{j_0}'a_1))(u_{+}^1 \ract (q_{j_1}'p_{i_0}))}\tensor{R}c_1\Big)\tensor{S^\op}\lr{(a_2\tensor {}q_{j_2}'{}^o)\tensor{\Ree}u^2_{+}\tensor{\Ree}(c_2\tensor{}p_{i_2}^o)} \\ \tensor{S^\op} \cdots \tensor{S^\op}\lr{(a_{n}\tensor {}q_{j_{n}}'{}^o)\tensor{\Ree}u^{n}_{+}\tensor{\Ree}(c_{n}\tensor{}p_{i_n}^o)}  \tensor{S^\op} \left[\underset{}{} (d_{n}\tensor{}q_{i_n}^o)\tensor{\Ree} \right. \\ \Big[ \lr{(q_{i_{n-1}}p'_{j_n})\blact u^{n}_{-} \bract (b_nd_{n-1})} \lr{(q_{i_{n-2}}p'_{j_{n-1}})\blact u^{n-1}_{-} \bract (b_{n-1}d_{n-2})} \cdots \lr{(q_{i_{0}}p'_{j_{1}})\blact u^{1}_{-} \bract (b_{1}p)}m_{(-1)} \Big] \\ \tensor{\Ree}   
(q\tensor{}p'_{j_0}{}^o) \left. \underset{}{} \right].
\end{multline*}
By Eqs.~\eqref{campanilla2} and \eqref{douceuretresistance}, we can eliminate the sum with the index $j_0$. Thus we have
\begin{multline*}
\td{t}(\td{m}\tensor{S^\op}\td{u}^1\tensor{S^\op}\cdots\tensor{S^\op}\td{u}^n) \\ \,=\,
 \underset{\underset{i_{0,\ldots,n}}{ j_{1,\ldots,n}}}{\sum} \Big( p_{i_1}\tensor{R} \lr{m_{(0)}(u_{+}^1 \ract (q_{j_1}'p_{i_0}))}\tensor{R}c_1\Big)\tensor{S^\op}\lr{(a_2\tensor {}q_{j_2}'{}^o)\tensor{\Ree}u^2_{+}\tensor{\Ree}(c_2\tensor{}p_{i_2}^o)} \\ \tensor{S^\op} \cdots \tensor{S^\op}\lr{(a_{n}\tensor {}q_{j_{n}}'{}^o)\tensor{\Ree}u^{n}_{+}\tensor{\Ree}(c_{n}\tensor{}p_{i_n}^o)}  \tensor{S^\op} \left[\underset{}{} (d_{n}\tensor{}q_{i_n}^o)\tensor{\Ree} \right. \\ \Big[ \lr{(q_{i_{n-1}}p'_{j_n})\blact u^{n}_{-} \bract (b_nd_{n-1})} \lr{(q_{i_{n-2}}p'_{j_{n-1}})\blact u^{n-1}_{-} \bract (b_{n-1}d_{n-2})} \cdots \lr{(q_{i_{0}}p'_{j_{1}})\blact u^{1}_{-} \bract (b_{1}p)}m_{(-1)} \Big] \\ \tensor{\Ree}   
(q\tensor{}a_1^o) \left. \underset{}{} \right].
\end{multline*}
Repeating the same process, but now using repeatedly \eqref{Sch3}, we can eliminate the sums indexed by $i_0,j_1,\cdots,j_n$, and obtain the stated formula. 
\end{proof}

In order to show invariance of Hopf-cyclic homology, we will first of all construct a quasi-isomorphism between the $b$-columns, 
denoted again by $C_\bull(U,M)$ resp.\ $C_\bull(\td{U},\td{M})$, 
of the cyclic bicomplexes $CC_{\bull\bull}(U,M)$ and $CC_{\bull\bull}(\td{U},\td{M})$ associated to the respective cyclic modules (cf.\ \S\ref{almeria}).

Define the map $\theta_n: C_n(U,M) \to C_n(\tilde{U},\tilde{M})$ as follows: for $n=0$, set
$$
\theta_0: M \longrightarrow \tilde{M}, \quad m \longmapsto \sum_i p_i \otimes_\erre m \otimes_\erre q_i,
$$
and for $n \geq 1$, abbreviating $x:= u^1 \otimes_\Ropp \cdots \otimes_\Ropp u^n$, set
\begin{equation}
\label{recattolico1}
\begin{split}
\theta_n: 
m \otimes_\Ropp x \longmapsto  \sum_{\scriptscriptstyle{i_{0, \ldots, {n-1}}} \atop \scriptscriptstyle{j_{0, \ldots, n}}} 
 & (p_{i_0} \otimes_\erre m \otimes_\erre q_{j_0}) \otimes_\Sopp \big((p_{j_0} \otimes q_{i_0}^o) \otimes_\Ree u^1 \otimes_\Ree (q_{j_1} \otimes p_{i_1}^o)\big) \\
&\quad \otimes_\Sopp  \cdots \otimes_\Sopp \big((p_{j_{n-1}} \otimes q_{i_{n-1}}^o) \otimes_\Ree u^{n} \otimes_\Ree (q_{j_{n}} \otimes p_{j_{n}}^o)\big). 
\end{split}
\end{equation}
In the opposite direction, introduce the map $\gamma_n: C_n(\tilde{U},\tilde{M}) \to  C_n(U,M)$, which is, for $n=0$,
$$
\gamma_0: \tilde{M} \longrightarrow M, \quad \big( \td{m}:=p \otimes_\erre m \otimes_\erre q\big) \longmapsto \sum_{j}(q'_jp)m(qp'_j),
$$
and for $n \geq 1$ it is given as
\begin{equation}
\label{reinacattolica1}
\begin{footnotesize}
{
\begin{split}
& \gamma_n:  \tilde{m} \otimes_\Sopp \tilde{x} \longmapsto  \sum_{j_{0, \ldots, n}} 
 m(q p'_{j_0}) \otimes_\Ropp \lr{ (q'_{j_0}a_1) \lact u^1 \ract (b_1p) \bract (c_1p'_{j_1}) } \\
& \quad  \otimes_\Ropp \lr{(q'_{j_1}a_2) \lact u^2 \ract (b_2d_1) \bract (c_2p'_{j_2})} \otimes_\Ropp 
\cdots \otimes_\Ropp \lr{(q'_{j_n} d_n) \blact (q'_{j_{n-1}}a_n) \lact  u^n \ract (b_nd_{n-1}) \bract (c_np'_{j_n})},
\end{split} }
\end{footnotesize}
\end{equation}
where $\tilde{u}^i := (a_i \otimes b^o_i) \otimes_\Ree u^i \otimes_\Ree (c_i \otimes d^o_i) \,\in \td{U}$ 
for $1 \leq i \leq n$, and
$\tilde{x}:= \tilde{u}^1 \otimes_\Sopp \cdots \otimes_\Sopp \tilde{u}^n$.

\begin{lem}
\label{revell}
The maps $\theta_{\bull}$ and $\gamma_{\bull}$ are morphisms of chain complexes. 
\end{lem}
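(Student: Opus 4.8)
The plan is to verify, for both $\theta_\bull$ and $\gamma_\bull$, commutativity with each of the simplicial face operators $d_i$ in degrees $n$ and $n-1$, since the boundary $b = \sum_{i=0}^n (-1)^i d_i$ of \eqref{Eq:b} is the alternating sum of the $d_i$; once each $d_i$ intertwines the two maps, the alternating sum does as well, and $\theta_\bull$, $\gamma_\bull$ become chain maps for the $b$-columns. The $d_i$ come in three flavours according to \eqref{adualnightinpyongyang}: the inner degeneracy-type faces ($1 \le i \le n-1$) that multiply two adjacent tensor factors via $\tilde\mu$, the face $d_0$ that folds the last tensor factor into the coefficient via $\tilde\varepsilon$ (applied through $\hat\varepsilon$), and the face $d_n$ that acts on the coefficient $\td{m}$ by the module structure \eqref{Eq:act_tildeM}. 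I would organise the proof as a sequence of three computational checks for each map, treating these three families separately.

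First I would treat the inner faces $d_i$ for $1 \le i \le n-1$. For $\theta_\bull$, applying $d_i$ on the $\td{U}$-side multiplies the factors $\td{u}^{n-i}$ and $\td{u}^{n-i+1}$ using the Morita multiplication $\tilde\mu$ of \eqref{Eq:tilde_mu}, and the task is to show that the $P$/$Q$-index contractions appearing in the definition \eqref{recattolico1} collapse exactly so as to reproduce $\theta_{n-1}$ applied to the product $u^{n-i}u^{n-i+1}$ on the $U$-side; the key mechanism is that consecutive slots share a summed index (e.g. $q_{j_k} \otimes p_{i_k}^o$ meeting $p_{j_k} \otimes q_{i_k}^o$) which, under $\tilde\mu$, reassembles via $\psi$ and $\phi$ — that is, via the relations $\sum_i q_i p_i = 1_R$ and $\sum_j p'_j q'_j = 1_S$ — into the single $U$-multiplication. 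For $\gamma_\bull$ the same mechanism runs in reverse. Second, I would handle $d_0$: on the $\td U$-side this contracts the last factor through $\tilde\varepsilon$ of \eqref{Eq:tilde_epsilon}, and I expect the counit identities \eqref{Eq:counit} together with the Morita traces to match the $U$-side face $\varepsilon(u^n) \blact u^{n-1}$. Third, for $d_n$ the coefficient action \eqref{Eq:act_tildeM} on $\td M$ must reduce to the action $mu^1$ on $M$ after index contraction, again using $\sum_i q_i p_i = 1_R$ and $\sum_j p'_j q'_j = 1_S$ and the bimodule relations \eqref{campanilla1}, \eqref{campanilla2}.

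The main obstacle I expect is purely bookkeeping rather than conceptual: keeping track of the several independent summation indices $i_{0,\ldots,n-1}$, $j_{0,\ldots,n}$ and the opposite-ring superscripts $(\,\cdot\,)^o$ correctly through each face map, and verifying at each step that the remaining contractions genuinely telescope via the Morita unit relations without accidentally shifting which slot a given index belongs to. The inner-face case for $1 \le i \le n-1$ is the most delicate, since there the multiplication $\tilde\mu$ \eqref{Eq:tilde_mu} itself reshuffles the $(a,b,c,d)$ decorations through the actions $\blact, \bract, \lact, \ract$, so one must confirm that these decorations recombine into precisely the pattern prescribed by $\theta_{n-1}$ (resp.\ $\gamma_{n-1}$). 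I would carry out the $\theta$ computations in full and then remark that the $\gamma$ computations are entirely analogous (running the same index-contraction identities in the opposite direction), so that it suffices to present one side in detail.
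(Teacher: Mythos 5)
Your proposal is correct and is essentially the paper's own proof: the paper likewise decomposes $b$ into the three families of face maps ($d_0$, the inner faces $d_k$ for $1\le k\le n-1$, and $d_n$) and verifies face-by-face commutativity, using the explicit Morita structure maps \rmref{Eq:tilde_mu}, \rmref{Eq:tilde_epsilon}, \rmref{Eq:act_tildeM}, the counit identities \rmref{Eq:counit}, and the contraction relations $\sum_i q_ip_i = 1_R$, $\sum_j p'_jq'_j = 1_S$, exactly as you outline. The only difference is immaterial: you propose to write out the $\theta_\bull$ computation in full and invoke analogy for $\gamma_\bull$, whereas the paper does the (as it notes, more complicated) $\gamma_\bull$ case in detail and states that the $\theta_\bull$ case is similar.
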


\begin{proof}
We only check the compatibility of the differential with $\gamma_n$ since the computation for $\theta_n$ is similar but less complicated. 
Decompose
$$
b\gamma_n\,\,=\,\, \overset{=:(i)}{\overbrace{d_0\gamma_n}} \,+\, \overset{=:(ii)}{\overbrace{\sum_{1\leq \, k\,  \leq n-1} (-1)^k d_k \gamma_n}} \,+\, \overset{=:(iii)}{\overbrace{(-1)^n d_n\gamma_n}},
$$ 
where $b$ is the differential \rmref{Eq:b} of the underlying simplicial structure of $C_{\bull}({U}, {M})$ as in \eqref{adualnightinpyongyang}. 
When applying this map to an element of the form 
$\td{m}\tensor{S^\op}\td{u}^1\tensor{S^\op}\cdots\tensor{\Sopp}\td{u}^n$ 
(using the notation above), each term is explicitly given by 
\begin{footnotesize}
\begin{eqnarray*} 
(i) &\!\!\!\!\!\!=&\!\!\!\!\!\! \underset{j_{0,\, \ldots,n}}{\sum}  (m(qp_{j_0}') \tensor{\Ree}\Big( \lr{(q_{j_0}'a_1)\lact u^1\ract(b_1p)}\bract (c_1p_{j_1}')\Big) \tensor{\Ree}\cdots  \tensor{\Ree} 
\\ 
&\!\!\!\!\!\!\!\!\!\! &\!\!\!\!
\!\!\!\!\!\! \LR{ \varepsilon\Big( (q_{j_n}'d_n)\blact \lr{(q_{j_{n-1}}'a_n)\lact u^n\ract(b_nd_{n-1})}\bract(c_np_{j_n}')\Big) \blact \Big( \lr{(q_{j_{n-2}}'a_{n-1})\lact u^{n-1}\ract(b_{n-1}d_{n-2})}\bract(c_{n-1}p_{j_{n-1}}') \Big)} 
\\
\!\!\!\!
 &\!\!\!\!\!\!\overset{\eqref{Eq:counit}}{=} &\!\!\!\!\!\!
  \underset{j_{0,\, \ldots,n-1}}{\sum}  (m(qp_{j_0}') \tensor{\Ree}\Big( \lr{(q_{j_0}'a_1)\lact u^1\ract(b_1p)}\bract (c_1p_{j_1}')\Big) \tensor{\Ree}\cdots  \tensor{\Ree} \\ & & \LR{\Big((q_{j_{n-1}}'a_{n}) \varepsilon( u^n\bract(c_nd_n))(b_nd_{n-1})\Big) \blact \Big( \lr{(q_{j_{n-2}}'a_{n-1})\lact u^{n-1}\ract(b_{n-1}d_{n-2})}\bract(c_{n-1}p_{j_{n-1}}') \Big)};
\\
(ii) &\!\!\!\!\!\!=&\!\!\!\!\!\!
 \sum_{k=1}^{n-1}\underset{j_{0,\ldots,{n-k-1},{n-k+1},\ldots,{n}}}{\sum}(m(qp_{j_0}')\tensor{\Ree}\Big( \lr{(q_{j_0}'a_1)\lact u^1\ract(b_1p)}\bract (c_1p_{j_1}')\Big)   \tensor{\Ree}\cdots \tensor{\Ree} \\ 
&\!\!\!\!\!\!\!\!\!\!\!&\!\!\!\!\!\!\!\!\!\!\! \LR{  \Big( (b_{n-k-1}d_{n-k})\blact \lr{(q_{j_{n-k-1}}'a_{n-k})\lact u^{n-k} \ract (b_{n-k}d_{n-k-1})}\bract (c_{n-k}a_{n-k+1})\Big)   \Big( u^{n-k+1} \bract(c_{n-k+1}p_{j_{n-k+1}}')\Big) } \\
&\!\!\!\!\!\!&\!\!\!\!\!\! \qquad \qquad 
\tensor{\Ree}\cdots \tensor{\Ree} 
\Big((q_{j_n}'d_n) \blact \lr{(q_{j_{n-1}}'a_n)\lact u^{n}\ract(b_nd_{n-1})}\bract (c_np_{j_n}')\Big); \\
(iii) &\!\!\!\!\!\!=&\!\!\!\!\!\!
  \sum_{j_{1, \ldots,n}} (-1)^n \Big( m\lr{\lr{(qa_1)\lact u^1\ract(b_1p)}\bract (c_1p_{j_1}') }\Big)\tensor{\Ree} 
  \\
&& \qquad \qquad \qquad \qquad \qquad \qquad \qquad 
\cdots \tensor{\Ree} \Big((q_{j_n}'d_n) \blact \lr{(q_{j_{n-1}}'a_n)\lact u^{n}\ract(b_nd_{n-1})}\bract (c_np_{j_n}')\Big). 
\end{eqnarray*}
\end{footnotesize}
On the other hand, we can also write  
$$ 
\gamma_{n-1}\td{b}\,\,=\,\, \overset{=:\widetilde{(i)}}{\overbrace{\gamma_{n-1} \td{d}_0}} \,+\, \overset{=:\widetilde{(ii)}}{\overbrace{\sum_{1\leq \, k\,  \leq n-1} (-1)^k  \gamma_{n-1}\td{d}_k}} \,+\, \overset{=:\widetilde{(iii)}}{\overbrace{(-1)^n \gamma_{n-1}\td{d}_n}},
$$
where $\td{b}$ is analogously the differential of the underlying simplicial structure of $C_{\bull}(\td{U}, \td{M})$.
Applying $\gamma_{n-1} \td{b}$ to the same element $\td{m}\tensor{S^\op}\td{u}^1\tensor{S^\op}\cdots\tensor{\Sopp}\td{u}^n$, 
we find that the first term is
\begin{footnotesize}
\begin{eqnarray*}
\widetilde{(i)} &=& \underset{j_{0, \ldots,{n-1}}}{\sum}  m(qp_{j_0}') \tensor{\Ree}\Big( \lr{(q_{j_0}'a_1)\lact u^1\ract(b_1p)}\bract (c_1p_{j_1}')\Big) \tensor{\Ree}\\
&& \quad \cdots \tensor{\Ree} \Big(  (q_{j_{n-1}}'\hat{d}_{n-1})\blact \lr{ (q_{j_{n-2}}'\hat{a}_{n-1})\lact \hat{u}^{n-1}\ract(\hat{b}_{n-1}d_{n-2}) } \bract(\hat{c}_{n-1}p_{j_{n-1}}') \Big),
\end{eqnarray*}
\end{footnotesize}
where we denoted the elements $\td{\varepsilon}(\td{u}^n)\blact \td{u}^{n-1} =: 
(\hat{a}_{n-1}\tensor{}\hat{b}_{n-1}{}^o)\tensor{\Ree}\hat{u}^{n-1}\tensor{\Ree}(\hat{c}_{n-1}\tensor{}\hat{d}_{n-1}{}^o)$. 
Computing explicitly this term, we obtain
\begin{footnotesize}
\begin{eqnarray*}
\td{\varepsilon}(\td{u}^n)\blact \td{u}^{n-1}&\!\!\!\!\!\!
=&\!\!\!\!\!\! \td{u}^{n-1}\td{\eta}(1\tensor{}\td{\varepsilon}(\td{u}^n)^o) \nonumber \\
&\!\!\!\!\!\!\overset{\eqref{Eq:tilde_eta} }{=}&\!\!\!\!\!\! \sum_{i_n,\, j_n} 
({a}_{n-1}\tensor{}{b}_{n-1}{}^o)\tensor{\Ree}\Big ((q_{i_n}'d_{n-1})\blact{u}^{n-1} \bract (c_{n-1}p_{j_n}') \Big) \tensor{\Ree}(q_{j_n}'\tensor{}(\td{\varepsilon}(\td{u}^n) p_{i_n}'){}^o) \nonumber \\
&\!\!\!\!\!\!\overset{\eqref{Eq:tilde_epsilon}}{=}&\!\!\!\!\!\!
\sum_{i_n,\, j_n} 
({a}_{n-1}\tensor{}{b}_{n-1}{}^o)\tensor{\Ree}\Big ((q_{i_n}'d_{n-1})\blact{u}^{n-1} \bract (c_{n-1}p_{j_n}') \Big)  \tensor{\Ree}(q_{j_n}'\tensor{}\lr{a_n \varepsilon(u^n\bract (c_nd_n)) b_n p_{i_n}'){}^o}  \nonumber\\ 
&\!\!\!\!\!\!=&\!\!\!\!\!\! 
\sum_{ j_n} 
({a}_{n-1}\tensor{}{b}_{n-1}{}^o)\tensor{\Ree}\Big ({u}^{n-1} \bract (c_{n-1}p_{j_n}') \Big)  \tensor{\Ree}(q_{j_n}'\tensor{}\lr{a_n \varepsilon(u^n\bract (c_nd_n)) b_n d_{n-1}){}^o}
\nonumber\\ 
&\!\!\!\!\!\!=&\!\!\!\!\!\! 
({a}_{n-1}\tensor{}{b}_{n-1}{}^o)\tensor{\Ree}{u}^{n-1}   \tensor{\Ree}\Big(\big( c_{n-1}(\sum_{ j_n} p_{j_n}'q_{j_n}') \big)\tensor{}\lr{a_n \varepsilon(u^n\bract (c_nd_n)) b_n d_{n-1}){}^o} \Big) \nonumber \\ 
&\!\!\!\!\!\!= &\!\!\!\!\!\!
({a}_{n-1}\tensor{}{b}_{n-1}{}^o)\tensor{\Ree}\, {u}^{n-1}  \, \tensor{\Ree} \Big(c_{n-1}\tensor{}\lr{a_n \varepsilon(u^n\bract (c_nd_n)) b_n d_{n-1}){}^o}\Big),
 \label{Eq:T1}
\end{eqnarray*}
\end{footnotesize}
thence, 
$$ 
\hat{a}_{n-1}=a_{n-1},\quad \hat{b}_{n-1}= {b}_{n-1}, \quad \hat{u}^{n-1}=u^{n-1},\quad \text{ and } \quad \hat{d}_{n-1}=a_n \varepsilon(u^n\bract (c_nd_n)) b_n d_{n-1}.
$$
Inserting this into the expression of $\widetilde{(i)}$ above, one obtains $\widetilde{(i)}=(i)$.
The second term can be written as follows:
\begin{footnotesize}
\begin{multline*} 
\widetilde{(ii)} \,=\, \sum_{k=1}^{n-1}\underset{j_{0,\ldots,{n-k-1},{n-k+1},\ldots,{n}}}{\sum}(m(qp_{j_0}')\tensor{\Ree}\Big( \lr{(q_{j_0}'a_1)\lact u^1\ract(b_1p)}\bract (c_1p_{j_1}')\Big)   \tensor{\Ree}\cdots \\ \tensor{\Ree} 
 \Big( \lr{(q_{j_{n-k-1}}'\bara{a}_{n-k})\lact \bara{u}^{n-k} \ract (\bara{b}_{n-k}d_{n-k-1})}\bract (\bara{c}_{n-k}p_{j_{n-k+1}}')\Big) \\   \qquad \qquad \qquad \tensor{\Ree}
 \Big( \lr{(q_{j_{n-k+1}}'a_{n-k+2})\lact u^{n-k+2} \ract (b_{n-k+2}\bara{d}_{n-k}) }\bract(c_{n-k+2}p_{j_{n-k+1}}')\Big)  \\
\tensor{\Ree}\cdots \tensor{\Ree} \Big((q_{j_n}'d_n) \blact \lr{(q_{j_{n-1}}'a_n)\lact u^{n}\ract(b_nd_{n-1})}\bract (c_np_{j_n}')\Big), 
\end{multline*}
\end{footnotesize}
where we denoted the elements 
\begin{footnotesize}
\begin{eqnarray} 
 \td{u}^{n-k} \td{u}^{n-k+1} &\!\!\!\!\!\!:=&\!\!\!\!\!\! (\bara{a}_{n-k}\tensor{}\bara{b}_{n-k}{}^o)\tensor{\Ree}\bara{u}^{n-k}\tensor{\Ree}(\bara{c}_{n-k}\tensor{}\bara{d}_{n-k}{}^o) 
\nonumber \\
\label{Eq:T2}
&\!\!\!\!\!\!\overset{\eqref{Eq:tilde_mu}}{\,=}&\!\!\!\!\!\!  (a_{n-k}\tensor{}b_{n-k}^o)\tensor{\Ree} \lr{(b_{n-k+1}d_{n-k}) \blact u^{n-k} \bract (c_{n-k}a_{n-k+1})}  u^{n-k+1} \\
\nonumber
&& \hspace*{8cm} \tensor{\Ree}(c_{n-k+1}\tensor{}d_{n-k+1}^o). 
\end{eqnarray}
\end{footnotesize}
Therefore, $\widetilde{(ii)} = (ii)$ after substituting 
\eqref{Eq:T2} in $\widetilde{(ii)}$.
As for the third term, we have
\begin{footnotesize}
\begin{eqnarray*}
\widetilde{(iii)} &=&  \sum_{j_{1, \ldots,n}} (-1)^n \Big( m\lr{\lr{(qa_1)\lact u^1\ract(b_1p)}\bract (c_1p_{j_1}') }\Big)\tensor{\Ree}\hspace{3cm} \\ & & \hspace{2cm} \cdots \tensor{\Ree} \Big((q_{j_n}'d_n) \blact \lr{(q_{j_{n-1}}'a_n)\lact u^{n}\ract(b_nd_{n-1})}\bract (c_np_{j_n}')\Big),
\end{eqnarray*}
\end{footnotesize}
which is obviously $(iii)$. We conclude that $\gamma_{\bull}$ is a morphism of chain complexes. 
\end{proof}

\begin{proposition}\label{prop: h}
The composite $\gamma_n \theta_n$ is homotopic to the identity, the homotopy $h_n: C_n(U, M) \to C_{n+1}(U,M)$ 
being explicitly given by the following map: for $n=0$, define  
$$
h_0: m \mapsto \sum_{i,\,j} m(q_ip'_j) \otimes_\Ropp ((q'_jp_i) \lact 1_\uhhu),
$$
and for $n\geq 1$, set
\begin{equation}\label{reyescattolicos}
\begin{footnotesize}
{
\begin{split}
h_n: m \otimes_\Ropp x \mapsto 
& \sum^n_{k=0}  \underset{\underset{i_{0, \ldots, k}}{j_{0,\ldots, k}}}{\sum}  (-1)^{k+n} m(q_{i_0}p'_{j_0}) 
\otimes_\Ropp \lr{ (q'_{j_0}p_{i_0}) \lact u^1 \bract (q_{i_1}p'_{j_1}) }
\otimes_\Ropp  \\
& \quad \cdots  \otimes_\Ropp\lr{ (q'_{j_{k-1}}p_{i_{k-1}}) \lact u^k \bract   (q_{i_k}p'_{j_k}) }   \otimes_\Ropp  \Big( (q'_{j_k}p_{i_k})  \lact 1_\uhhu \Big) \otimes_\Ropp u^{k+1} \otimes_\Ropp \cdots \otimes_\Ropp u^n 
\end{split} }
\end{footnotesize}
\end{equation}
abbreviating $x:=u^1 \otimes_\Ropp \cdots \otimes_\Ropp u^n$ as before. 
Similarly, $\theta_n \gamma_n$ is homotopic to the identity as well.
\end{proposition}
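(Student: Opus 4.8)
The plan is to verify the simplicial homotopy identity
\[
b_{n+1}\,h_n + h_{n-1}\,b_n \;=\; \gamma_n\theta_n - \id_{C_n(U,M)},
\]
where $b=\sum_{i=0}^n(-1)^i d_i$ is the differential \rmref{Eq:b} of the simplicial object underlying $C_\bull(U,M)$. Since $\theta_\bull$ and $\gamma_\bull$ are already chain maps by Lemma \ref{revell}, this identity exhibits $\gamma_\bull\theta_\bull$ and the identity as homotopic, and hence makes $\theta_\bull,\gamma_\bull$ mutually inverse up to homotopy on the $b$-columns. The statement for the opposite composite $\theta_\bull\gamma_\bull$ follows from the completely symmetric argument in which the two halves $(R,P,\psi)$ and $(S,Q,\phi)$ of the Morita context exchange their roles, so I would concentrate on $\gamma_\bull\theta_\bull$ and only record at the end the analogous homotopy $\tilde h_\bull$ on $C_\bull(\td U,\td M)$.

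First I would record what $\gamma_n\theta_n$ actually is. Substituting \rmref{recattolico1} into \rmref{reinacattolica1} and summing over the indices $i_{0,\ldots,n-1}$ first, every leg of $\gamma_n$ acquires a factor $u^k\ract(q_{i_{k-1}}p_{i_{k-1}})$ (with the convention that the slot below $u^1$ carries the element $p$ of $\td m$), and the Morita relation $\sum_i q_ip_i = 1_R$ contracts each of these. What remains is an operator built only from the ``mixed'' products $q_{j}p'_{j'},\,q'_{j'}p_{j}\in R$ threading neighbouring legs; in the base case $n=0$ it collapses, using \rmref{campanilla1}--\rmref{campanilla2}, to $\gamma_0\theta_0(m)=\sum_{i,j}m\,s(q_ip'_j)\,t(q'_jp_i)$, which is visibly \emph{not} the identity. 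The content of the proposition is that this defect is exactly a boundary.

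The heart of the proof is the verification of the homotopy identity, for which I would split $h_n=\sum_{k=0}^n(-1)^{k+n}h_n^{(k)}$, with $h_n^{(k)}$ the summand of \rmref{reyescattolicos} that Morita-converts the first $k$ legs $u^1,\ldots,u^k$, inserts the unit $1_\uhhu$ into slot $k+1$, and leaves $u^{k+1},\ldots,u^n$ unchanged. Evaluating $d_i\,h_n^{(k)}$ and $h_{n-1}^{(k')}\,d_i$ with the explicit faces \rmref{adualnightinpyongyang} produces the usual prismatic telescope: a face applied to a leg strictly below, or strictly above, the inserted unit matches a term of $h\,b$ carrying the opposite sign, so that all such interior contributions cancel in pairs. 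The only unmatched terms are (a) those in which a face meets the inserted unit $1_\uhhu$, which by unitality of the multiplication and by the counit relations \rmref{Eq:counit} rebuild the original chain and contribute $-\id$, and (b) the extreme term in which the converted block has swallowed all $n$ legs, which after a last use of $\sum_i q_ip_i=1_R$, $\sum_j p'_jq'_j=1_S$ and the Takeuchi condition \rmref{Sch3} reassembles into $\gamma_n\theta_n$; the weight $(-1)^{k+n}$ is rigged exactly so that these survivors carry the signs the identity demands. For $n=0$ this reduces to $b_1h_0=\gamma_0\theta_0-\id$, which in turn rests on the single computation $\sum_{i,j}(q_ip'_j)(q'_jp_i)=\sum_i q_i\bigl(\sum_j p'_jq'_j\bigr)p_i=\sum_i q_ip_i=1_R$, illustrating the mechanism in miniature.

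The genuinely delicate point will be the bookkeeping rather than the cancellation pattern, which is standard. One has to carry the four commuting actions $\lact,\ract,\blact,\bract$ through the balanced tensor products $\otimes_\Ropp$, keep the Morita indices intrinsic to $h_n$ disjoint from those produced by $\theta_n$ and $\gamma_n$, and apply each dual-basis contraction $\sum_j p'_j(q'_jp)=p$, $\sum_i q_i(p_iq)=q$ only when the relevant elements of $P$ and $Q$ occupy adjacent legs, so that the contraction is legal over the pertinent tensor product. I expect the main obstacle to be precisely these repeated checks of well-definedness over $\otimes_\Ropp$ (where \rmref{Sch3} and the balancing relations \rmref{campanilla2}, \rmref{douceuretresistance} must be invoked) together with confirming that the inserted unit interacts correctly with the counit face $d_0$, the multiplication faces $d_1,\ldots,d_{n-1}$, and the module face $d_n$, which all behave differently. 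Once these are in place the computation closes, and the symmetric homotopy $\tilde h_\bull$ settles $\theta_\bull\gamma_\bull\simeq\id$.
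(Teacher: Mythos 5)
Your overall strategy is the paper's: verify the simplicial identity $bh_n+h_{n-1}b=\gamma_n\theta_n-\id$ directly, and indeed your preliminary computation of $\gamma_n\theta_n$ (contraction of the $\ract$-factors via $\sum_i q_ip_i=1_R$, leaving the ``all legs converted'' operator) and your $n=0$ case agree exactly with what the paper does. The gap lies in your accounting of the surviving terms: your claim (a), that every face which meets the inserted unit ``rebuilds the original chain and contributes $-\id$'', is false, and followed literally the verification would not close. Decompose $h_n=\sum_k(-1)^{k+n}h_n^{(k)}$ as you propose, with the unit in slot $k+1$. For $1\leq k\leq n$, the multiplication face merging the unit of $h_n^{(k)}$ downwards into the converted $u^k$ does meet the unit, but after the contraction $\sum_{i_k,j_k}(q_{i_k}p'_{j_k})(q'_{j_k}p_{i_k})=1_R$ it produces the \emph{half-converted} chain in which $u^1,\dots,u^{k-1}$ are fully converted, $u^k$ carries only its left factor $(q'_{j_{k-1}}p_{i_{k-1}})\lact$, and $u^{k+1},\dots,u^n$ are untouched; this is neither the original chain nor any summand of $h_{n-1}b$. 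The same face applied to $h_n^{(k-1)}$ (merging its unit upwards into the unconverted $u^k$) produces exactly the same chain with opposite sign, so these $2n$ terms must cancel \emph{pairwise inside $bh_n$ itself} --- the analogue of the standard simplicial-homotopy identity $d_{j+1}h_{j+1}=d_{j+1}h_j$. This second cancellation mechanism is absent from your scheme, which matches $bh_n$-terms only against $h_{n-1}b$-terms. Likewise the counit face $d_0$ applied to $h_n^{(n-1)}$ meets the unit (via $\varepsilon(u^n)\blact$) yet contributes neither $-\id$ nor $\gamma_n\theta_n$: using \rmref{pouvoir} it cancels against the top summand of $h_{n-1}d_0$. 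In the correct bookkeeping exactly one term yields $-\id$, namely the module-action face $d_{n+1}$ applied to $h_n^{(0)}$ (unit adjacent to $m$, killed by $\sum_{i,j}(q_ip'_j)(q'_jp_i)=1_R$), and exactly one yields $\gamma_n\theta_n$, namely $d_0$ applied to $h_n^{(n)}$; this is precisely how the paper organizes its proof.

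Two smaller points. The identities \rmref{Sch3} and \rmref{douceuretresistance} you invoke play no role in this proposition: the translation map $u_+\otimes_\Ropp u_-$ and the coaction enter only through the cyclic operator $t_n$ of \rmref{adualnightinpyongyang}, not through the faces, so what the verification actually needs besides the Morita relations are \rmref{pouvoir}, \rmref{Eq:counit} and \rmref{campanilla1}--\rmref{campanilla2}. Your reduction of $\theta_\bull\gamma_\bull\simeq\id$ to the symmetric context $(S,R,Q,P,\psi,\phi)$ is reasonable; the paper offers no more detail there either.
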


\begin{proof}
We need to check $bh_0 = \gamma_0 \theta_0 - \id$ for $n=0$ and $bh_n + h_{n-1}b = \gamma_n \theta_n - \id$ for $n > 0$. 
As for the first one, it is immediate that
\begin{equation*}
b h_0(m) = \sum_{i,j} \varepsilon((q'_jp_i) \lact 1_\uhhu) \blact m(q_ip'_j) - m \\
= \sum_{i, j} (q'_jp_i)\big(m) (q_ip'_j) - m
= \gamma_0 \theta_0(m) - m.
\end{equation*}
In case $n > 0$, since multiplying two consecutive tensor factors of $h_n$ kills the respective $q, p$ as well as the $q', p'$ 
between them, 
it is
straightforward to see that 
\begin{equation}
\label{ronda}
\sum_{k=1}^n (-1)^k d_k h_n(m \otimes_\Ropp x) + \sum_{k=1}^{n-1} (-1)^k h_{n-1} d_k (m \otimes_\Ropp x) =0. 
\end{equation}
As for the remaining terms, we have
\begin{footnotesize}
\begin{equation*}
\begin{split}
&(-1)^{n+1} d_{n+1}   h_n(m \otimes_\Ropp x) \\ 
&= - m \otimes_\Ropp x 
+  (-1)^{n+1} \sum^n_{k=1} \underset{\underset{i_{1, \ldots, k}}{j_{1,\ldots, k}}}{\sum} (-1)^{k+n} mu^1(q_{i_1}p'_{j_1}) 
\otimes_\Ropp  \cdots  \otimes_\Ropp\lr{ (q'_{j_{k-1}}p_{i_{k-1}}) \lact u^k \bract   (q_{i_k}p'_{j_k}) } \\
& \hspace*{8cm} \otimes_\Ropp  \Big( (q'_{j_k}p_{i_k})  \lact 1_\uhhu \Big) \otimes_\Ropp u^{k+1} \otimes_\Ropp \cdots \otimes_\Ropp u^n \\
& = - m \otimes_\Ropp x 
+  (-1)^{n+1} \sum^{n-1}_{k=0} \underset{\underset{i_{0, \ldots, k}}{j_{0,\ldots, k}}}{\sum} (-1)^{k+(n-1)} (mu^1)(q_{i_0}p'_{j_0}) 
\otimes_\Ropp  \lr{ (q'_{j_{0}}p_{i_{0}}) \lact u^{2} \bract   (q_{i_1}p'_{j_1}) } \otimes_\Ropp  \cdots \\
&  \hspace*{3.5cm} \otimes_\Ropp \lr{ (q'_{j_{k-1}}p_{i_{k-1}}) \lact u^{k+1} \bract   (q_{i_k}p'_{j_k}) } \otimes_\Ropp  \Big( (q'_{j_k}p_{i_k})  \lact 1_\uhhu \Big) \otimes_\Ropp u^{k+2} \otimes_\Ropp \cdots \otimes_\Ropp u^n \\
&= (- \id -  (-1)^n h_{n-1} d_n) (m \otimes_\Ropp x).
\end{split}
\end{equation*}
\end{footnotesize}
Moreover,
\begin{footnotesize}
\begin{equation*}
\begin{split}
& d_{0} h_n(m \otimes_\Ropp x)  \\ 
& =\sum^{n-1}_{k=0} \underset{\underset{i_{0, \ldots, k}}{j_{0,\ldots, k}}}{\sum}  (-1)^{k+n} (m \bract (q_{i_0}p'_{j_0}) )
\otimes_\Ropp \lr{ (q'_{j_0}p_{i_0}) \lact u^1 \bract (q_{i_1}p'_{j_1}) }
\otimes_\Ropp  \cdots  \\
&\qquad \quad \otimes_\Ropp\lr{ (q'_{j_{k-1}}p_{i_{k-1}}) \lact u^k \bract   (q_{i_k}p'_{j_k}) }   \otimes_\Ropp  \Big( (q'_{j_k}p_{i_k})  \lact 1_\uhhu \Big) \otimes_\Ropp u^{k+1} \otimes_\Ropp \cdots \otimes_\Ropp u^{n-2} \otimes_\Ropp \big(\gve(u^n) \blact u^{n-1}\big) \\
& 
\quad +  \underset{\underset{i_{0,\ldots,{n}}}{j_{0,\ldots,{n}}}}{\sum} (m \bract (q_{i_0}p'_{j_0}) )
\otimes_\Ropp \lr{ (q'_{j_0}p_{i_0}) \lact u^1 \bract (q_{i_1}p'_{j_1}) }
\otimes_\Ropp  \cdots \otimes_\Ropp 
\big((q'_{j_{n-1}}p_{i_{n-1}}) \lact (q'_{j_n}p_{i_n}) \blact u^n \bract   (q_{i_n}p'_{j_n}) \big), 
\end{split}
\end{equation*}
\end{footnotesize}
where we have used Eqs.~\rmref{pouvoir} and \rmref{Eq:counit}. 
The first sumand is easily seen to be equal to $- h_{n-1} d_0$ and we are left with computing the last sumand: 
by definition of $\theta_n$ and $\gamma_n$ (see Eqs.~\eqref{recattolico1}--\eqref{reinacattolica1})
\begin{footnotesize}
\begin{eqnarray*}
 \gamma_n\theta_n(m\otimes_\Ropp x) &=& \gamma_n\Big[
\sum_{\scriptscriptstyle{k_{0, \ldots, {n-1}}} \atop \scriptscriptstyle{j_{0, \ldots, n}}} 
  (p_{k_0} \otimes_\erre m \otimes_\erre q_{j_0}) \otimes_\Sopp \big((p_{j_0} \otimes q_{k_0}^o) \otimes_\Ree u^1 \otimes_\Ree (q_{j_1} \otimes p_{k_1}^o)\big)  \\
& &  \qqquad \qqquad 
\otimes_\Sopp  \cdots \otimes_\Sopp 
\big((p_{j_{n-1}} \otimes q_{k_{n-1}}^o) \otimes_\Ree u^{n} \otimes_\Ree (q_{j_{n}} \otimes p_{j_{n}}^o)\big)\underset{\underset{}{}}{\underset{}{}} \Big]
\\ &=& 
 \sum_{\scriptscriptstyle{k_{0, \ldots, {n-1}}}  \atop \scriptscriptstyle{j_{0, \ldots, n};\, i_{0,\ldots,n}}} 
    m (q_{j_0}p_{i_0}') \otimes_\Rop \big((q_{i_0}'p_{j_0})\lact  u^1 \ract (q_{k_0}p_{k_0}) \bract (q_{j_1}p'_{i_1}) \big) 
\\ 
&& \qquad \qquad \otimes_\Rop \cdots \otimes_\Rop 
\big((q'_{i_{n}}p_{j_n})\blact (q_{i_{n-1}}'p_{j_{n-1}}) \lact  u^{n} \ract (q_{k_{n-1}} p_{k_{n-1}}) \bract(q_{j_{n}} p_{i_{n}}')\big) \\
&= & 
\quad   \underset{\underset{i_{0,\ldots,{n}}}{j_{0,\ldots,{n}}}}{\sum} (m \bract (q_{i_0}p'_{j_0}) )
\otimes_\Ropp \lr{ (q'_{j_0}p_{i_0}) \lact u^1 \bract (q_{i_1}p'_{j_1}) }
\\ & &  
\qquad \qquad
\otimes_\Ropp  \cdots \otimes_\Ropp 
\big((q'_{j_{n-1}}p_{i_{n-1}}) \lact (q'_{j_n}p_{i_n}) \blact u^n \bract   (q_{i_n}p'_{j_n}) \big),
\end{eqnarray*}
\end{footnotesize}
where \rmref{campanilla2} was used in the last line and which, as is seen by interchanging the indices, is exactly the last term in the expression of $d_{0} h_n(m \otimes_\Ropp x)$ above. Hence we have shown that 
$$
d_{0} h_n(m \otimes_\Ropp x) \,  =\, \big(-h_{n-1} d_0 +  \gamma_n \theta_n\big)(m \otimes_\Ropp x). 
$$
Combining this with \rmref{ronda}, we obtain
$
bh_n + h_{n-1}b = \gamma_n \theta_n - \id,
$
and this finishes the proof.
\end{proof}

To pass to the cyclic case, we prove first:
\begin{lemma}
\label{Lemma:tgamma}
The morphisms of chain complexes $\theta_{\bull}$ and $\gamma_{\bull}$ are morphisms of cyclic objects. That is, they satisfy:
$$ \gamma_{\bull} \, \td{t}_{\bull}\,\,=\,\, t_{\bull} \, \gamma_{\bull},\qquad 
\theta_{\bull} \, t_{\bull}\,\,=\,\, \td{t}_{\bull} \, \theta_{\bull}.$$
\end{lemma}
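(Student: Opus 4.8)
The plan is to establish both identities by direct computation, using the explicit form of the cyclic operator $\tilde t$ furnished by Lemma~\ref{Lemma:II} together with the definitions of $\theta_\bull$, $\gamma_\bull$ in \eqref{recattolico1}, \eqref{reinacattolica1}, the cyclic operator $t_\bull$ from \eqref{adualnightinpyongyang}, and the translation map \eqref{Eq:tilde_beta} of $\tilde U$. Since $\theta_\bull$ is the structurally simpler of the two maps, as already observed in the proof of Lemma~\ref{revell}, I would carry out $\gamma_\bull\,\tilde t_\bull = t_\bull\,\gamma_\bull$ in full and then obtain $\theta_\bull\,t_\bull = \tilde t_\bull\,\theta_\bull$ along entirely parallel but shorter lines. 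Throughout, only the compatibilities \eqref{campanilla1}--\eqref{campanilla2} are needed, exactly as in Lemma~\ref{Lemma:II}.

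For the identity $\gamma_\bull\,\tilde t_\bull = t_\bull\,\gamma_\bull$, I would first apply $\gamma_n$ to a generator $\tilde m \otimes_\Sopp \tilde x$, producing the element of $C_n(U,M)$ whose zeroth slot is $m(qp'_{j_0})$ and whose $k$-th slot carries the decorated factor $(q'_{j_{k-1}}a_k)\lact u^k\ract(b_kd_{k-1})\bract(c_kp'_{j_k})$, and then apply $t_n$ from \eqref{adualnightinpyongyang}: this coacts on the leading $M$-component, splits each $U$-factor through its translation elements $(-)_+ \otimes_\Ropp (-)_-$, and transports the product $u^n_-\cdots u^1_- m_{(-1)}$ into the final slot. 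On the other side, I would feed the explicit formula for $\tilde t$ from Lemma~\ref{Lemma:II} into $\gamma_n$ and expand by \eqref{reinacattolica1}. The decisive structural point is how the translation map interacts with the source/target decorations: applying $(-)_+\otimes_\Ropp(-)_-$ to a twisted element distributes according to \eqref{Sch4}--\eqref{Sch5}, while \eqref{Sch3} (membership in $U\times_{\Rop}U$) permits sliding the $R$-actions across the $\otimes_\Ropp$ in precisely the pattern prescribed by $t_n$. Combined with the comodule identities \eqref{maotsetung}, \eqref{douceuretresistance} and the compatibilities \eqref{campanilla1}, \eqref{campanilla2}, this reduces both composites to a common expression.

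The matching is completed by collapsing the redundant Morita indices: I would repeatedly insert the completeness relations $\sum_i q_i p_i = 1_R$ and $\sum_j p'_j q'_j = 1_S$, so that whenever a consecutive pair of the form $p'_{j_k}\cdots q'_{j_k}$ or $q_{i_k}\cdots p_{i_k}$ brackets a factor that has been moved into position, the corresponding summation telescopes to the unit of $R$ or $S$, reproducing the index bookkeeping already seen in Lemma~\ref{Lemma:II}. The identity $\theta_\bull\,t_\bull = \tilde t_\bull\,\theta_\bull$ then follows by the same mechanism, now with $\theta_n$ inserting the resolution-of-unity elements $p_{i}\otimes_\erre m\otimes_\erre q_{j}$ in front and feeding the result into the $\tilde t$ of Lemma~\ref{Lemma:II}; here fewer decorations are present, so the reduction is noticeably shorter.

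The main obstacle is organizational rather than conceptual: keeping the four mutually commuting actions $\lact,\ract,\blact,\bract$ (governed by \eqref{pouvoir} and \eqref{Eq:counit}) in their correct slots while the translation map $\tilde\beta^{-1}$ and the comultiplication \eqref{Eq:tilde_mu} redistribute them, and triggering each telescoping $\sum_i q_ip_i=1_R$, $\sum_j p'_jq'_j=1_S$ at exactly the tensor position where it cancels a spurious index. The one genuinely structural step to get right is the compatibility of the translation map with the source/target twists, namely that the translation elements of a decorated element again lie in $U\times_{\Rop}U$ by \eqref{Sch3} and factor according to \eqref{Sch4}--\eqref{Sch5}; once this is secured, every remaining manipulation is a bounded bookkeeping reduction, and no use of the stability condition \eqref{Eq:SaYD} or of the aYD compatibility \eqref{huhomezone} is required.
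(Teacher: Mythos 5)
Your proposal is correct and follows essentially the same route as the paper: both sides are computed directly, feeding the explicit formula for $\td{t}$ from Lemma \ref{Lemma:II} into $\gamma_n$ on one side and applying $t_n$ to the output of $\gamma_n$ on the other, with the translation map of the decorated elements resolved via \eqref{Sch4}--\eqref{Sch5}, the comodule identities \eqref{maotsetung}, \eqref{douceuretresistance} handling the coaction, and the Morita indices collapsed/renumbered to match the two expressions --- exactly the paper's argument, which likewise proves only the $\gamma_\bull$ identity in detail and treats the $\theta_\bull$ identity as analogous. Your observation that neither \eqref{huhomezone} nor \eqref{Eq:SaYD} is needed also agrees with the paper.
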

\begin{proof}
We only check the first equation. 
Take an element 
$\td{m}\tensor{\Sopp}\td{u}^1 \tensor{\Sopp}\cdots\tensor{\Sopp} \td{u}^n \in C_n(\td{U},\, \td{M})$, for $n \geq 0$. Then, applying equations \eqref{Sch47}, \eqref{Sch5}, and \eqref{maotsetung}, we can write
\begin{footnotesize}
\begin{equation*}
\begin{split}
t_n \gamma_n(\td{m}\tensor{\Sopp}\td{u}^1 \tensor{\Sopp}\cdots\tensor{\Sopp} \td{u}^n)
 &=  \sum_{j_{0,\ldots,n}} \Big( \lr{m_{(0)}\bract (q_{j_0}'a_1)}\lr{u_{+}^1 \bract (c_1p_{j_1}')}\Big) \tensor{\Ropp} \Big( (q_{j_1}'a_2)\lact u_{+}^2 \bract (c_2p_{j_2}')\Big) \\ 
& \qquad
\tensor{\Ropp} \cdots\tensor{\Ropp}\Big((q_{j_{n-1}}'a_n) \lact u_{+}^n \bract (c_np_{j_n}')\Big) \\ 
& \qquad
  \tensor{\Ropp}\left[\underset{}{} ((q_{j_n}'d_n) \lact u^n_{-})((b_nd_{n-1}) \lact u^{n-1}_{-}) ((b_{n-1}d_{n-2}) \lact u^{n-2}_{-}) \cdots \right. \\ 
& \qquad 
\cdots \left. \underset{}{} ( (b_2d_1)\lact u^1_{-}) ( (b_1p) \lact m_{(-1)}\bract(qp_{j_0}')) \right].
\end{split}
\end{equation*}
\end{footnotesize}
On the other hand, we have 
\begin{footnotesize}
\begin{eqnarray*}
\gamma_n\td{t}_n(\td{m}\tensor{\Sopp}\td{u}^1 \tensor{\Sopp}\cdots\tensor{\Sopp} \td{u}^n)
 \!\!\!\!&=\!\!\!\!&  \sum_{j_{0,\ldots,n}} \Big( \lr{m_{(0)}u_{+}^1} \bract (c_1p_{j_0}')\Big) \tensor{\Ropp} \Big( (q_{j_0}'a_2)\lact u_{+}^2 \bract (c_2p_{j_1}')\Big) \\
&& \qquad 
\tensor{\Ropp} \cdots\tensor{\Ropp}\Big( (q_{j_{n-2}}'a_n) \lact u_{+}^n \bract (c_np_{j_{n-1}}')\Big) \\
&& \qquad   
\tensor{\Ropp}\left[\underset{}{} ((q_{j_{n-1}}'d_n)\lact u^n_{-})((b_nd_{n-1}) \lact u^{n-1}_{-}) ((b_{n-1}d_{n-2}) \lact u^{n-2}_{-}) \cdots \right. \\
&& \qquad\qquad  
\cdots \left.\underset{}{} ((b_2d_1) \lact u^1_{-}) \Big( (b_1p) \lact \lr{ (q_{j_n}'a_1) \blact m_{(-1)}\bract(qp_{j_n}')} \Big) \right] \\
\!\!\!\!
&\overset{\eqref{douceuretresistance}}{=}\!\!\!\!&  \sum_{j_{0,\ldots,n}} \Big( \lr{(m_{(0)}\bract(q_{j_n}'a_1))u_{+}^1} \bract (c_1p_{j_0}')\Big) \tensor{\Ropp} \Big( (q_{j_0}'a_2)\lact u_{+}^2 \bract (c_2p_{j_1}')\Big) \\ 
&& \qquad  
\tensor{\Ropp} \cdots\tensor{\Ropp}\Big( (q_{j_{n-2}}'a_n) \lact u_{+}^n \bract (c_np_{j_{n-1}}')\Big) \\ 
&& \qquad 
\tensor{\Ropp}\left[\underset{}{} ((q_{j_{n-1}}'d_n)\lact u^n_{-})((b_nd_{n-1}) \lact u^{n-1}_{-}) ((b_{n-1}d_{n-2}) \lact u^{n-2}_{-}) \cdots \right. \\ 
&& \qquad \qquad
\cdots \left.\underset{}{} ((b_2d_1) \lact u^1_{-}) \Big( (b_1p) \lact \lr{ m_{(-1)} \bract (qp_{j_n}') } \Big) \right].
\end{eqnarray*}
\end{footnotesize}
Now, renumbering the indices we find the equality, and this finishes the proof.
\end{proof}

Combining Lemma \ref{revell}, Proposition \ref{prop: h}, and Lemma \ref{Lemma:tgamma}, 
we conclude that $\theta_\bull$ and $\gamma_\bull$ are in particular {\em equivalences} of cyclic modules. Consequently, we 
can now formulate the main theorem of this paper:
\begin{thm}
\label{thm:main1}
{\rm (Morita base change invariance of (Hopf-)cyclic homology).}
Let $(R, U)$ be a left Hopf algebroid, $M$ a left $U$-comodule right $U$-module which is SaYD (i.e., satisfies \eqref{campanilla1}--\eqref{Eq:SaYD}), and $(R,S,P,Q,\phi,\psi)$ a Morita context. We then have the following natural $\Bbbk$-module isomorphisms:
\begin{eqnarray*}
H_\bull(U, M) &\simeq& H_\bull(\td{U}, P \otimes_\erre M \otimes_\erre Q), \\
HC_\bull(U, M) &\simeq& HC_\bull(\td{U}, P \otimes_\erre M \otimes_\erre Q). 
\end{eqnarray*}
\end{thm}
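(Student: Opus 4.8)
The plan is to obtain the theorem as a formal consequence of the three preceding results, which between them establish that $\theta_\bull$ and $\gamma_\bull$ constitute an equivalence of cyclic $\kk$-modules between $C_\bull(U,M)$ and $C_\bull(\td{U},\td{M})$. First I would note that, since $M$ is SaYD, Lemma \ref{Lemma:I} guarantees that $\td{M}=P\otimes_\erre M\otimes_\erre Q$ is SaYD as well; hence both $C_\bull(U,M)$ and $C_\bull(\td{U},\td{M})$ are genuinely cyclic (and not merely para-cyclic) by the discussion of \S\ref{ssubsect:CUM}, so that their cyclic homologies are defined. By Lemma \ref{revell} the maps $\theta_\bull$ and $\gamma_\bull$ are morphisms of the underlying $b$-complexes, and by Proposition \ref{prop: h} they are mutually inverse up to the explicit homotopies $h_\bull$. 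Thus $\theta_\bull$ and $\gamma_\bull$ are homotopy equivalences of the simplicial (Hochschild) complexes, which immediately yields the first isomorphism $H_\bull(U,M)\simeq H_\bull(\td{U},\td{M})$.

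For the cyclic homology, the decisive extra input is Lemma \ref{Lemma:tgamma}: since $\theta_\bull$ and $\gamma_\bull$ intertwine the cyclic operators $t$ and $\td{t}$, they commute with the norm operator $N=\sum_i t^i$ and with $1-t$, and therefore lift to morphisms of the full cyclic bicomplexes $CC_{\bull\bull}(U,M)$ and $CC_{\bull\bull}(\td{U},\td{M})$. In particular they induce a morphism between the associated Connes periodicity (SBI) long exact sequences
$$
\cdots \to H_n \xrightarrow{I} HC_n \xrightarrow{S} HC_{n-2} \xrightarrow{B} H_{n-1} \to \cdots,
$$
which is compatible with $S$, $B$ and $I$ by their naturality. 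Anchoring an induction at low degrees (where $HC_0=H_0$ is already covered by the Hochschild case) and applying the five lemma at each step, the isomorphism on $H_\bull$ propagates to an isomorphism $HC_\bull(U,M)\simeq HC_\bull(\td{U},\td{M})$; this is the classical principle that a morphism of cyclic modules which is a quasi-isomorphism on Hochschild homology is one on cyclic homology as well. Naturality of both isomorphisms in the coefficient module $M$ is inherited from the manifest naturality of the formulas defining $\theta_\bull$, $\gamma_\bull$ and $h_\bull$.

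Equivalently, one may filter each cyclic bicomplex by columns and compare the two resulting spectral sequences through the bicomplex map induced by $\theta_\bull$: the $E^1$-page is the $b$-homology of each column, hence reproduces Hochschild homology, on which $\theta_\bull$ is already an isomorphism, so the comparison theorem forces an isomorphism on the abutment $HC_\bull$. I expect no real obstacle at this stage, as all the genuinely delicate computations --- constructing the maps, checking they are chain maps, exhibiting the contracting homotopy, and verifying cyclic compatibility --- have already been carried out in Lemmas \ref{revell}, \ref{Lemma:tgamma} and Proposition \ref{prop: h}. The only point meriting a little care is the convergence of the column-filtration spectral sequence for the half-plane cyclic bicomplex; if one prefers to avoid it entirely, the SBI/five-lemma argument above needs no convergence hypothesis. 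The cohomological half of Theorem~A then follows by dualising the same scheme, as announced at the start of the section.
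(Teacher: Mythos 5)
Your proposal is correct and follows essentially the same route as the paper: the paper's own proof consists of combining Lemma \ref{revell}, Proposition \ref{prop: h}, and Lemma \ref{Lemma:tgamma} to conclude that $\theta_\bull$ and $\gamma_\bull$ are equivalences of cyclic modules, and then invoking the $SBI$ sequence (citing \cite[\S2.5.12]{Lod:CH}) exactly as in your five-lemma argument. Your additional observations --- that Lemma \ref{Lemma:I} is what makes $C_\bull(\td{U},\td{M})$ genuinely cyclic, and the alternative column-filtration spectral-sequence argument --- are sound but supplementary to the same underlying proof.
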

\begin{proof}
This follows at once by using the $SBI$ sequence for cyclic modules, cf.~\cite[\S2.5.12]{Lod:CH} for details.
\end{proof}

\subsection{The cohomology case}

In this section, we will consider the case of Hopf-cyclic cohomology under Morita base change. Since all steps are basically analogous to the preceding section, we refrain from spelling out the details and just indicate the main ingredients. 

Consider the cocyclic module 
$\big(C^\bull(U,M), \gd_\bull, \gs_\bull, \tau_\bull\big)$ 
as in \rmref{anightinpyongyang}. In the spirit of \rmref{recattolico1} and \eqref{reinacattolica1},
define first the map $\zeta_n: C^n(U,M) \to C^n(\tilde{U},\tilde{M})$ as follows: for $n=0$, define
$$
\zeta_0: M \longrightarrow \tilde{M}, \quad m \longmapsto \sum_j p'_j \otimes_\erre m \otimes_\erre q'_j,
$$
and for $n \geq 1$, abbreviating $y:= u^1 \otimes_\erre \cdots \otimes_\erre u^n$, define 
\begin{footnotesize}
\begin{equation*}
\begin{split}
\zeta_n: 
y \otimes_\erre m \longmapsto  \sum_{\scriptscriptstyle{i_{0, \ldots, {n-1}}} \atop \scriptscriptstyle{j_{0, \ldots, n}}} 
 & \big((p'_{j_0} \otimes q_{i_0}^o) \otimes_\Ree u^1 \otimes_\Ree (q'_{j_0} \otimes {p'_{j_1}}^o)\big) \otimes_\esse 
\big((p_{i_0} \otimes q_{i_1}^o) \otimes_\Ree u^2 \otimes_\Ree (q'_{j_1} \otimes {p'_{j_2}}^o)\big)  \\
&\quad \otimes_\esse  \cdots \otimes_\esse \big((p_{i_{n-2}} \otimes q_{i_{n-1}}^o) \otimes_\Ree u^{n} \otimes_\Ree (q'_{j_{n-1}} \otimes {p'_{j_{n}}}^o)\big) \otimes_\esse (p_{i_{n-1}} \otimes_\erre m \otimes_\erre q'_{j_n}).  
\end{split}
\end{equation*}
\end{footnotesize}
Second, define the map $\xi_n: C^n(\tilde{U},\tilde{M}) \to  C^n(U,M)$, which is
$$
\xi_0: \tilde{M} \longrightarrow M, \quad \big( \td{m}:=p \otimes_\erre m \otimes_\erre q\big) \longmapsto \sum_i(q_ip)m(qp_i),
$$
in degree $n = 0$, and for $n \geq 1$ is given by
\begin{equation*}
\begin{footnotesize}
{
\begin{split}
& \xi_n:  \td{y} \otimes_\esse \tilde{m} \longmapsto  
\sum_{i_{0, \ldots, n}} 
\lr{ (q_{i_0}a_1) \lact (q_{i_1}d_1) \blact u^1 \ract (b_1a_2) \bract (c_1p_{i_0}) } \otimes_\erre 
\lr{(q_{i_2}d_2) \blact u^2 \ract (b_2a_3) \bract (c_2p_{i_1}) } \otimes_\erre \cdots
 \\
&  \quad \otimes_\erre \lr{(q_{i_{n-1}}d_{n-1}) \blact u^{n-1} \ract (b_{n-1}a_n) \bract (c_{n-1}p_{i_{n-2}})} 
\otimes_\erre 
\lr{(q_{i_n} d_n) \blact u^n \ract (b_n p) \bract (c_np_{i_{n-1}})} \otimes_\erre m(qp_{i_n}),
\end{split} }
\end{footnotesize}
\end{equation*}
where $\tilde{u}^i := (a_i \otimes b^o_i) \otimes_\Ree u^i \otimes_\Ree (c_i \otimes d^o_i) \,\in \td{U}$ 
for $1 \leq i \leq n$, and
$\tilde{y}:= \tilde{u}^1 \otimes_\esse \cdots \otimes_\esse \tilde{u}^n$.

Third, introduce the homotopy $h_n: C^{n+1}(U, M) \to C^{n}(U,M)$ as follows: in degree $n=0$, set 
$$
h_0: u \otimes_\erre m \longmapsto \sum_{i,\,j} \gve\big((q_i p'_j) \blact u\big) m(q'_jp_i),
$$
and for $n\geq 1$ define
\begin{footnotesize}
\begin{equation}
\begin{split}
\label{amagica}
h_n: y' \otimes_\erre m  \longmapsto 
& \sum^n_{k=0}  \underset{\underset{i_{0, \ldots, k}}{j_{0,\ldots, k}}}{\sum}  (-1)^{k+n} 
u^0 \otimes_\erre \cdots \otimes_\erre u^{n-k-1} \otimes_\erre \gve\big((q_{i_0} p'_{j_0}) \blact u^{n-k}\big) \otimes_\erre \\
&  
\lr{ (q_{i_1}p'_{j_1}) \blact u^{n-k+1} \bract (q'_{j_0}p_{i_0}) }
\otimes_\erre   \cdots  \otimes_\erre
\lr{ (q_{i_k}p'_{j_k}) \blact u^{n} \bract (q'_{j_{k-1}}p_{i_{k-1}}) } \otimes_\erre m(q'_{j_k}p_{i_k}),
\end{split}
\end{equation}
\end{footnotesize}
abbreviating here $y':=u^0 \otimes_\erre \cdots \otimes_\erre u^n$.

Now, with the construction of $\td{U}$ and $\td{M}$ as in \S\ref{subsect:Morita} and analogously to Lemma \ref{Lemma:II}, one can construct a cocyclic module $C^\bull(\td{U}, \td{M})$; we leave the tedious details to the reader. 
Similarly as in Lemma \ref{revell}, Proposition \ref{prop: h}, and Lemma \ref{Lemma:tgamma}, one then proves:
\begin{lem}
The maps $\zeta_{\bull}$ and $\xi_{\bull}$ are morphisms of cochain complexes, and $\xi_\bull \zeta_\bull$ is homotopic to the identity by means of the homotopy \rmref{amagica}; likewise, $\zeta_\bull \xi_\bull$ is homotopic to the identity as well. In particular, $\zeta_\bull$ and $\xi_\bull$ are equivalences of cocyclic modules.
\end{lem}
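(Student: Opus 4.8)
The plan is to follow verbatim the strategy of the homology case (Lemma~\ref{revell}, Proposition~\ref{prop: h}, and Lemma~\ref{Lemma:tgamma}), transporting each step to the cosimplicial/cocyclic setting of \rmref{anightinpyongyang}. The first preparatory step is to record the explicit form of the cocyclic operator $\tilde{\tau}$ on $C^\bull(\td{U},\td{M})$, exactly as Lemma~\ref{Lemma:II} did for $\tilde{t}$ in the homology case: one feeds the tilde structure maps \eqref{Eq:tilde_Delta} and \eqref{Eq:coact_tildeM} and the translation map \eqref{Eq:tilde_beta} into the formula for $\tau_n$ in \rmref{anightinpyongyang}, and then collapses the resulting multi-indexed sums by repeated use of the Morita completeness relations $\sum_j p'_j q'_j = 1_\esse$ and $\sum_i q_i p_i = 1_\erre$ together with the Sweedler--Takeuchi relation \rmref{Sch3} and \eqref{douceuretresistance}. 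This yields a closed expression for $\tilde{\tau}$ that is directly comparable with $\tau$ on $C^\bull(U,M)$.

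Next I would verify that $\zeta_\bull$ and $\xi_\bull$ commute with the coboundaries, i.e.\ that they are morphisms of cochain complexes. As in Lemma~\ref{revell} it suffices to treat $\xi_\bull$, since $\zeta_\bull$ is analogous but simpler. Write the coboundary $\beta=\sum_{i=0}^{n+1}(-1)^i\gd_i$ and split it into three blocks: the term $\gd_0$, the interior terms $\gd_i$ ($1\le i\le n$) which insert a comultiplication, and the terminal term $\gd_{n+1}$ which inserts the coaction. Comparing $\beta\,\xi_n$ with $\xi_{n+1}\,\beta$ block by block, the interior terms match because $\xi$ is built factor-wise and $\tilde{\gD}$ is computed from $\gD$ via \eqref{Eq:tilde_Delta}; the $\gd_0$- and $\gd_{n+1}$-blocks match after using the counit/unit formulas \eqref{Eq:tilde_epsilon} and \eqref{Eq:tilde_eta} and the comodule coaction \eqref{Eq:coact_tildeM}, again with the Morita relations employed to cancel the auxiliary indices. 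This is the cohomological mirror of the computation carried out for $\gamma_\bull$.

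The third step is the homotopy. With $h_\bull$ as in \rmref{amagica}, I would establish the cosimplicial homotopy relation $\beta\,h_\bull+h_\bull\,\beta=\xi_\bull\zeta_\bull-\id$ in each degree. Exactly as in Proposition~\ref{prop: h}, the cofaces acting in the interior of $h_n$ cancel in pairs (the cohomological analogue of \rmref{ronda}), because contracting two consecutive tensor legs of $h_n$ annihilates the interpolated Morita elements $q,p$ and $q',p'$; what remains are the two extreme coface contributions, which recombine---again via $\sum_j p'_jq'_j=1_\esse$, $\sum_i q_ip_i=1_\erre$ and the counit relations \eqref{Eq:counit}---into precisely $\xi_n\zeta_n-\id$. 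Because the Morita base change equivalence is symmetric in $(R,U)$ and $(S,\td{U})$, as noted in the proof of Lemma~\ref{Lemma:I}, the dual relation $\zeta_\bull\xi_\bull\simeq\id$ follows by the identical argument with the roles interchanged.

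Finally, to upgrade ``morphisms of cochain complexes'' to ``equivalences of \emph{cocyclic} modules'', I would check compatibility with the cocyclic operators, $\xi_\bull\,\tilde{\tau}=\tau\,\xi_\bull$ and $\zeta_\bull\,\tau=\tilde{\tau}\,\zeta_\bull$, exactly as in Lemma~\ref{Lemma:tgamma}: using the closed form of $\tilde{\tau}$ from the first step together with \eqref{Sch47}, \eqref{Sch5} and \eqref{maotsetung}, both sides reduce to the same multi-indexed expression after renumbering indices. The main obstacle throughout is not conceptual but the bookkeeping: each computation carries two families of Morita indices $i_{0,\ldots,n}$ and $j_{0,\ldots,n}$, and the crux at every stage is to apply the Sweedler--Takeuchi conditions \rmref{Sch3} and \eqref{douceuretresistance} and the completeness relations in the correct order so that the auxiliary indices collapse and the cohomological signs $(-1)^{k+n}$ telescope correctly; keeping the two opposite-algebra tensor conventions over $\Ree$ and $\See$ straight is where errors are most likely to creep in.
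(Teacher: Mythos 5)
Your proposal coincides with the paper's own treatment of this lemma: the paper gives no written-out proof, saying only that one constructs the cocyclic module $C^\bull(\td{U},\td{M})$ analogously to Lemma \ref{Lemma:II} and then argues ``similarly as in Lemma \ref{revell}, Proposition \ref{prop: h}, and Lemma \ref{Lemma:tgamma}'' --- which is precisely the four-step transport (explicit $\tilde{\tau}$, cochain-map property, homotopy relation with interior cancellation, cocyclic compatibility) that you outline. Your realization of that strategy, including the collapse of auxiliary Morita indices via $\sum_j p'_jq'_j=1_\esse$, $\sum_i q_ip_i=1_\erre$, \rmref{Sch3} and \eqref{douceuretresistance}, and the symmetry argument for $\zeta_\bull\xi_\bull\simeq\id$, is correct and follows essentially the same route as the paper.
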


This enables us to conclude:
\begin{thm}
\label{thm:main2}
{\rm (Morita base change invariance of (Hopf-)cyclic cohomology).}
Let $(R, U)$ be a left Hopf algebroid, $M$ a left $U$-comodule right $U$-module which is SaYD (i.e., satisfies \eqref{campanilla1}--\eqref{Eq:SaYD}), and $(R,S,P,Q,\phi,\psi)$ a Morita context. Then 
\begin{eqnarray*}
H^\bull(U, M) &\simeq& H^\bull(\td{U}, P \otimes_\erre M \otimes_\erre Q), \\
HC^\bull(U, M) &\simeq& HC^\bull(\td{U}, P \otimes_\erre M \otimes_\erre Q) 
\end{eqnarray*}
are isomorphisms of $\Bbbk$-modules.
\end{thm}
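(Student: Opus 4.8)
The plan is to deduce the statement formally from the preceding Lemma, exactly as Theorem~\ref{thm:main1} was obtained from Lemma~\ref{revell}, Proposition~\ref{prop: h}, and Lemma~\ref{Lemma:tgamma} in the homological setting. The preceding Lemma asserts that $\zeta_\bull$ and $\xi_\bull$ are equivalences of cocyclic $\kk$-modules between $C^\bull(U,M)$ and $C^\bull(\td{U},\td{M})$, where $\td{M}=P\otimes_\erre M\otimes_\erre Q$: they are mutually homotopy-inverse morphisms of the underlying cosimplicial $\kk$-modules and, in addition, they intertwine the respective cocyclic operators $\tau_\bull$ and $\td{\tau}_\bull$. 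I take this as the sole input for everything below, so that no further computation with the explicit structure maps of \S\ref{subsect:Morita} is needed.

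First I would record the simplicial statement. Since $\zeta_\bull$ and $\xi_\bull$ are morphisms of cochain complexes with respect to the differential $\beta=\sum_i(-1)^i\gd_i$ of \S\ref{almeria}, and since $\xi_\bull\zeta_\bull$ and $\zeta_\bull\xi_\bull$ are homotopic to the identity via the homotopy~\rmref{amagica}, they induce mutually inverse isomorphisms on cohomology. This yields $H^\bull(U,M)\simeq H^\bull(\td{U},\td{M})$ at once, which is the first of the two claimed isomorphisms, and the induced maps are $\kk$-linear and natural because $\zeta_\bull,\xi_\bull$ are.

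For the cyclic part, I would invoke Connes' periodicity long exact sequence (the cohomological dual of the $SBI$ sequence) attached to any cocyclic module, relating the simplicial cohomology $H^\bull$ and the cyclic cohomology $HC^\bull$ through the operators $B$, $I$, $S$; cf.~\cite[\S2.5.12]{Lod:CH} for the homological version used in Theorem~\ref{thm:main1} and its cohomological dual. Because $\zeta_\bull$ (and likewise $\xi_\bull$) is a morphism of cocyclic objects, and because $B$, $I$, $S$ are assembled from the cosimplicial and cocyclic structure maps alone, the equivalence is automatically compatible with these operators; hence it induces a morphism between the two periodicity sequences of $C^\bull(U,M)$ and $C^\bull(\td{U},\td{M})$. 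On the $H^\bull$-terms this morphism is the isomorphism established in the previous paragraph, so the five lemma, applied degree by degree along the sequence with the low-degree terms furnishing the base case, forces the induced map $HC^\bull(U,M)\to HC^\bull(\td{U},\td{M})$ to be an isomorphism as well.

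The computational heart of the argument has already been discharged in establishing the preceding Lemma, namely the verification that $\zeta_\bull,\xi_\bull$ are cochain maps, that they are homotopy inverses via~\rmref{amagica}, and, crucially, that they intertwine the cocyclic operators. Once that is in hand the theorem is purely formal. Accordingly, the only point I expect to require emphasis is the descent in the previous paragraph: ensuring that the cocyclic-module equivalence really passes to a morphism of the full periodicity sequences, and not merely of the Hochschild complexes. This is precisely where the compatibility with $\tau_\bull$ supplied by the Lemma is indispensable, and I would state it explicitly even though it demands no new calculation.
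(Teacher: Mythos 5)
Your proposal is correct and follows essentially the same route as the paper: the paper likewise derives Theorem \ref{thm:main2} purely formally from the preceding lemma (cochain homotopy equivalence plus compatibility with the cocyclic operators), with the cyclic part handled exactly as in Theorem \ref{thm:main1} via the $SBI$ (periodicity) sequence and the five lemma, citing \cite[\S2.5.12]{Lod:CH}. Your write-up merely makes explicit the naturality of $B$, $I$, $S$ and the degree-by-degree induction that the paper leaves to the reference.
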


\begin{rem}
\label{cat}
The proofs of both Theorems \ref{thm:main2} \& \ref{thm:main1} are based on an explicit construction of (co)chain homotopies. One could wonder if a more categorical way implicitly leads to the same result but with less computational effort. 
Closest to our setting is perhaps \cite{BoeSte:CCOBAAVC}, where a categorical approach to the cyclic (co)homology of bialgebroids was developed based on the notion of admissible septuples.  
Our situation fits in the particular examples of admissible septuples of \cite[Propositions 1.15 \& 1.25]{BoeSte:CCOBAAVC}; nevertheless, none of the (co)cyclic objects that, after some additional steps, can be deduced from 
{\em loc.~cit.}
coincides with our (co)cyclic modules from \rmref{adualnightinpyongyang} and \rmref{anightinpyongyang}, respectively. Since also the involved tensor products differ, the two approaches are not even related by considering cyclic duals.
That is, our cyclic (co)homology seems to be different from the one considered in \cite{BoeSte:CCOBAAVC}.

Let us explain how far one can go in applying the approach of \cite{BoeSte:CCOBAAVC} to Morita base change invariance.
Following the notation of \cite{BoeSte:CCOBAAVC}, one can show that given a category $\cC$, two equivalent categories $\cM$, $\cN$, and an admissible septuple $\mathcal{S}=(\cM, \cC, \bT_l, \bT_r, \boldsymbol{\Pi}, \bt, \bi)$ over $\cM$, there is an admissible septuple $\td{\mathcal{S}}=(\cN, \cC,\td{ \bT}_l,\td{ \bT}_r, \td{\boldsymbol{\Pi}}, \td{\bt}, \td{\bi})$ over $\cN$, whose corresponding categories of transposition morphisms $\cW_{\cS}$ and $\cW_{\td{\cS}}$ are also equivalent. 
Under some natural assumption on $\cC$, we know from \cite[Corollary 1.11]{BoeSte:CCOBAAVC} that there is a functor 
$\hat{Z}^*(\cS, -): \cW_{\cS} \to  \boldsymbol{\Delta C}^{\,\cC}$ to the category of cocyclic objects of $\cC$. 
In this way, the Morita invariance theory in this context can be interpreted as follows. 
Consider a transposition morphism 
$(X, \omega) \in \cW_{\cS}$ and its image  $(\td{X},\td{\omega}) \in \cW_{\td{\cS}}$. One can then 
assign to them two cocyclic objects $\hat{Z}^*(\cS,
  (X,\omega))$ and $\hat{Z}^*(\td{\cS}, (\td{X},\td{\omega}))$ in $\cC$. 
Morita invariance now claims that the associated (co)chain complexes were quasi-isomorphic.
At this level of generality, there is seemingly no way which directly furnishes such a quasi-isomorphism if not, analogously to our approach, by manually constructing such a map in special cases (e.g., the aforementioned particular admissible septuples of \cite[Propositions 1.15 \& 1.25]{BoeSte:CCOBAAVC}). 
\end{rem}

\section{Applications and Examples.}
We give two applications. The first one deals with the well-known Morita invariance of the usual Hochschild and cyclic homology for associative algebras. 
We show that this invariance theory is a consequence of our main Theorem \ref{thm:main1} by applying it to the 
left Hopf algebroids $\Reh$ and $\Se$. 
In the second application we specialise our general results to the Morita context between the complex-valued smooth functions on the commutative real $2$-torus $\mathbb{T}^2:=\mathbb{S}^1 \times \mathbb{S}^1$ and the  coordinate ring of the noncommutative $2$-torus  with rational parameter, establishing thereby a passage from commutative to noncommutative geometry. We will first review  the construction of this context, and next  apply the Morita base change invariance of the cyclic homology between the left Hopf algebroid attached to the Lie algebroid of vector fields on $\mathbb{T}^2$, and the associated Morita base change left Hopf algebroid over this noncommutative $2$-torus whose structure maps are deduced from  \S\ref{subsect:Morita}. 

\subsection{Morita invariance of cyclic homology for associative algebras}\label{ssec:algebras} 
Recall from \cite{Schau:DADOQGHA} the left Hopf algebroid structure of the enveloping algebra $\Reh$. 
Its structure maps are given as follows: $s(r) := r \otimes 1$, $t(r^o) := 1 \otimes r^o$, 
$\gD(r \otimes \tilde{r}^o) := (r \otimes 1) \otimes_\erre (1 \otimes \tilde{r}^o)$,
$\varepsilon(r \otimes \tilde{r}^o) := r\tilde{r}$, and the inverse of the Hopf-Galois map is given as 
$
(r \otimes \td{r}^o)_+ \otimes_\Ropp (r \otimes \td{r}^o)_- := (r \otimes 1) \otimes_\Ropp (\td{r} \otimes 1).
$

Let now $M$ be a right $\Reh$-module which is also an $\Reh$-comodule with compatible left $R$-actions as in \rmref{campanilla1}, and denote the coaction by $m \mapsto (m'_{(-1)} \otimes m^{''}_{(-1)}) \otimes_\erre m_{(0)}$, omitting the summation symbol in all what follows.
Under the isomorphism 
$C_\bull(\Reh, M) = 
M \otimes_\Ropp \Reh^{\otimes_\Ropp
\bull} 
\simeq M \otimes R^{\otimes \bull}$ 
given by
\begin{equation}
\label{kababking}
m \otimes_\Ropp (r_1 \otimes \tilde{r}^o_1) \otimes_\Ropp \cdots \otimes_\Ropp (r_n \otimes \td{r}_n^o) 
\lma 
\td{r}_n \cdots \td{r}_1 m \otimes r_1 \otimes \cdots \otimes r_n,   
\end{equation}
the para-cyclic operators 
\rmref{adualnightinpyongyang} assume the form
\begin{equation}
\label{ceuta}
\!\!\!
\begin{array}{rcll}
d_i(m \otimes y)  &\!\!\!\!\! =& \!\!\!\!\!
\left\{ \!\!\!
\begin{array}{l}
r_n m \otimes r_1  \otimes   \cdots   \otimes   r_{n-1}
\\
m \otimes \cdots \otimes  r_{n-i} r_{n-i+1} \otimes  \cdots \otimes r_{n}
\\
m r_1 \otimes r_2  \otimes   \cdots    \otimes  
r_n 
\end{array}\right.  & \!\!\!\!\!\!\!\!   \begin{array}{l} \mbox{if} \ i \!=\! 0, \\ \mbox{if} \ 1
\!  \leq \! i \!\leq\! n-1, \\ \mbox{if} \ i \! = \! n, \end{array} \\
\
\\
s_i(m \otimes y) &\!\!\!\!\! =&\!\!\!\!\!  \left\{ \!\!\!
\begin{array}{l} m \otimes r_1 \otimes \cdots \otimes r_n \otimes 1
\\
m \otimes \cdots \otimes  r_{n-i} \otimes 1 \otimes r_{n-i+1} \otimes \cdots  \otimes r_{n}
\\
m \otimes 1 \otimes r_1 \otimes  \cdots \otimes  r_n 
\end{array}\right.   & \!\!\!\!\!\!\!  \begin{array}{l} 
\mbox{if} \ i\!=\!0, \\ 
\mbox{if} \ 1 \!\leq\! i \!\leq\! n-1, \\  \mbox{if} \ i\! = \!n, \end{array} \\
\
\\
t_n(m \otimes y) 
&\!\!\!\!\!=&\!\!\!\!\! 
m^{''}_{(-1)} m_{(0)} r_1 \otimes r_2 \otimes \cdots
\otimes r_n \otimes m'_{(-1)}, 
& \\
\end{array}
\end{equation}
where we abbreviate $y:=r_1
\otimes \cdots \otimes r_n$, and as before $C_\bull(\Reh,M)$ is
cyclic if $M$ is SaYD.

Using the isomorphism
\begin{equation}
\label{bancopopular}
\begin{array}{rcl}
\Pe \otimes_\Ree \Reh \otimes_\Ree \Qe &\overset{\simeq}{\longrightarrow}& \Se, \\
(a \otimes b^o) \otimes_\Ree (r \otimes \td{r}^o) \otimes_\Ree (c \otimes d^o)
&\longmapsto& \phi(a \otimes_\erre rc) \otimes \phi(d\td{r} \otimes_\erre b)^o, \\
\end{array}
\end{equation}
where $\phi$ is as in \eqref{Eq: Morita maps},
together with \rmref{energiaproxima} and 
the isomorphism $C_\bull(\Se, \td{M}) \simeq \td{M} \otimes S^{\otimes n}$ analogously to \rmref{kababking}, 
a straightforward computation reveals that the morphism of chain complexes \rmref{recattolico1} reads 
\begin{equation*}
\begin{split}
\theta_n: 
m \otimes y \longmapsto  \sum_{\scriptscriptstyle{i_{0, \ldots, n}}} 
 & (p_{i_0} \otimes_\erre m \otimes_\erre q_{i_1}) \otimes \phi(p_{i_1} \otimes_\erre r_1 q_{i_2}) \otimes \cdots \otimes \phi(p_{i_n} \otimes_\erre r_n q_{i_0}). 
\end{split}
\end{equation*}
In the other direction, we make use of the isomorphism 
\begin{eqnarray*}
\Qe \otimes_\See \Se \otimes_\See \Pe &\overset{\simeq}{\longrightarrow}& \Reh, \\
(c \otimes d^o) \otimes_\See (s \otimes \td{s}^o) \otimes_\See (a \otimes b^o)
&\longmapsto& \psi(c\td{s} \otimes_\esse a) \otimes \psi(b \otimes_\esse sd)^o,
\end{eqnarray*}
together with the inverse of \rmref{bancopopular} given by, cf.~\rmref{Eq:tilde_eta},
$$
\Se \lra \Pe \otimes_\Ree \Qe, \quad s \otimes \td{s}^o  \lma \sum_{i,\, j} (sp'_j \otimes {q'_i}^o) \otimes_\Ree (q'_j \otimes {(\td{s}p'_i)}^o),
$$
to conclude that the morphism of chain complexes \rmref{reinacattolica1} becomes here
\begin{equation*}
\begin{split}
\gamma_n: 
(p & \otimes_\erre m \otimes_\erre q) \otimes z \longmapsto \\
& \sum_{\scriptscriptstyle{j_{0, \ldots, n}}} 
  (\psi(q'_{j_0} \otimes_\esse p) m \psi(q \otimes_\esse p'_{j_1}) 
\otimes \psi(q'_{j_1} \otimes_\esse s_1 p'_{j_2}) 
\otimes \cdots \otimes \psi(q'_{j_n} \otimes_\esse s_n p'_{j_0}), 
\end{split}
\end{equation*}
abbreviating $z := s_1 \otimes \cdots \otimes s_n$. 

In a similar manner, one derives the homotopy \rmref{reyescattolicos} in this case: for $n=0$, we obtain
\begin{equation*}
\begin{split}
& h_0:  m  \longmapsto \sum_{i,j} m \psi(q_i\tensor{}p_j') \tensor{}  \psi(q_j'\tensor{}p_i) ,  
\end{split}
\end{equation*}
and for $n \geq 1$:
\begin{footnotesize}
\begin{multline*}
h_n: m \otimes y \longmapsto 
 \sum_{k=0}^{n}\,\,\underset{\underset{i_{0, \ldots,k}}{j_{0, \ldots, k}}}{\sum} (-1)^{n+k} 
(m \psi (q_{i_0}\tensor{} p'_{j_0}) )   \otimes \Big( \psi(q'_{j_0}\tensor{}p_{i_0})  r_1 \psi (q_{i_1}\tensor{}p'_{j_1})  \Big) 
 \otimes \cdots \\ \quad \otimes  \Big( \psi(q'_{j_{n-k-1}}\tensor{}p_{i_{n-k-1}})  r_k \psi (q_{i_{n-k}}\tensor{} p'_{j_{n-k}}) \Big) 
  \otimes  \psi(q'_{j_{n-k}}\tensor{} p_{i_{n-k}})   \otimes r_{k+1} \otimes \cdots \otimes r_n, 
\end{multline*}
\end{footnotesize}
where we abbreviate $y:=r_1
\otimes \cdots \otimes r_n$.

One recovers the explicit maps given in \cite{McC:MEACH} for this situation, and hence from Theorem \ref{thm:main1} the classical result \cite{McC:MEACH, DenIgu:HHATSOFP} 
of Morita invariance in Hochschild theory follows. In case $M = R$, with \cite[Prop.~3.1]{KowPos:TCTOHA}, one furthermore 
reproduces the classical result of Morita invariance of cyclic homology of associative algebras from \cite{Con:NCDG, LodQui:CHATLAHOM,McC:MEACH}:
\begin{cor}
\label{cor:alg}
Let $R$ be an associative $\Bbbk$-algebra, 
$M$ an $(R,R)$-bimodule, and $(R,S,P,Q,\phi,\psi)$ a Morita context. 
We then have the following natural $\Bbbk$-module isomorphism
$$
H^{\rm alg}_\bull(R, M) \simeq H^{\rm alg}_\bull(S, P \otimes_\erre M \otimes_\erre Q), 
$$ 
and in case $M:=R$, we obtain
\begin{equation}
\label{seitenbachervollkornmuesli}
HC^{\rm alg}_\bull(R) \simeq HC^{\rm alg}_\bull(S). 
\end{equation}
\end{cor}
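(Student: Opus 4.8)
The plan is to deduce the corollary from the main Theorem~\ref{thm:main1} by specialising the whole machinery to the base left Hopf algebroid $U=\Reh$, whose left Hopf-algebroid structure was recalled above from \cite{Schau:DADOQGHA}. The two facts that drive everything are, first, that the Morita base change of $\Reh$ is nothing but $\Se$, and second, that the Hopf-algebroid cyclic module attached to $\Reh$ reproduces the classical Hochschild and cyclic complexes of $R$. First I would record, via the isomorphism \eqref{bancopopular} $\Pe\otimes_\Ree\Reh\otimes_\Ree\Qe\simeq\Se$ (compatible with \eqref{energiaproxima}), that the Morita base change left Hopf algebroid $\td{U}$ attached to $U=\Reh$ is precisely $\Se$; likewise, for a coefficient bimodule $M$ the transported module $\td{M}=P\otimes_\erre M\otimes_\erre Q$ is the usual Morita-transported $(S,S)$-bimodule, and for $M=R$ one has $\td{R}=P\otimes_\erre R\otimes_\erre Q\simeq P\otimes_\erre Q\simeq S$ via $\phi$, i.e.\ $\td{R}\simeq S$ as an $(S,S)$-bimodule.

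Next I would invoke the isomorphism \eqref{kababking}, under which the structure maps \eqref{adualnightinpyongyang} of $C_\bull(\Reh,M)$ assume the explicit form \eqref{ceuta}; these are exactly the faces, degeneracies and (para-)cyclic operator of the standard module computing the Hochschild, resp.\ cyclic, homology of $R$ with coefficients in $M$, so that $H_\bull(\Reh,M)\simeq H^{\rm alg}_\bull(R,M)$, and the very same identification applies verbatim over $S$. For the Hochschild statement I would then specialise the chain maps $\theta_\bull,\gamma_\bull$ of \eqref{recattolico1}--\eqref{reinacattolica1} and the homotopy $h_\bull$ of \eqref{reyescattolicos} along \eqref{bancopopular} and its inverse; as already computed in the text these become precisely the explicit transfer maps and homotopy of McCarthy \cite{McC:MEACH}. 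Since these maps and the homotopy involve only the $\Reh$-module structure (the comodule-dependent identities used in the proofs of Lemma~\ref{revell} and Proposition~\ref{prop: h} reduce to trivial module identities over $\Reh$), the chain-level equivalence holds for an arbitrary bimodule $M$, yielding $H^{\rm alg}_\bull(R,M)\simeq H^{\rm alg}_\bull(S,\td{M})$ and reproving the classical result of \cite{McC:MEACH, DenIgu:HHATSOFP}.

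For the cyclic statement I would set $M=R$, which is the canonical stable anti Yetter-Drinfel'd module over $\Reh$: here the full cyclic structure is available, so that Lemma~\ref{Lemma:tgamma} upgrades $\theta_\bull,\gamma_\bull$ to equivalences of cyclic modules and Theorem~\ref{thm:main1} applies in full. This gives $HC_\bull(\Reh,R)\simeq HC_\bull(\Se,\td{R})\simeq HC_\bull(\Se,S)$; applying \cite[Prop.~3.1]{KowPos:TCTOHA} on both sides to identify $HC_\bull(\Reh,R)\simeq HC^{\rm alg}_\bull(R)$ and $HC_\bull(\Se,S)\simeq HC^{\rm alg}_\bull(S)$ then yields \eqref{seitenbachervollkornmuesli}.

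The main obstacle is bookkeeping rather than conceptual: one must verify that the abstract Hopf-algebroid data of $\Reh$ — in particular the inverse Hopf--Galois map $(r\otimes\td{r}^o)_+\otimes_\Ropp(r\otimes\td{r}^o)_-=(r\otimes 1)\otimes_\Ropp(\td{r}\otimes 1)$ and, for $M=R$, the canonical coaction $m\mapsto(m'_{(-1)}\otimes m''_{(-1)})\otimes_\erre m_{(0)}$ together with the action \eqref{grimm} — specialise, under \eqref{kababking} and \eqref{bancopopular}, exactly to the classical Hochschild/cyclic operators and to McCarthy's maps, and that $R$ genuinely satisfies \eqref{campanilla1}--\eqref{Eq:SaYD} as a module over $\Reh$. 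Once these identifications are in place, the corollary is a direct translation of Theorem~\ref{thm:main1}, requiring no homological input beyond the $SBI$-sequence argument already used there.
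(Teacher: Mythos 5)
Your proposal is correct and follows essentially the same route as the paper: specialising $\theta_\bull,\gamma_\bull,h_\bull$ to $U=\Reh$, $\td U\simeq\Se$ via \eqref{kababking} and \eqref{bancopopular} to recover McCarthy's explicit maps, observing that the simplicial (Hochschild) part needs no coaction so an arbitrary bimodule $M$ is allowed, and handling the cyclic case only for $M=R$ with its canonical coaction (which is SaYD) together with \cite[Prop.~3.1]{KowPos:TCTOHA}. The paper's own treatment makes exactly these points, including the closing remark that no SaYD condition is needed for the Hochschild statement.
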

Observe that for this corollary no SaYD condition is needed: there is no coaction required to compute the homology of the underlying simplicial object in \rmref{ceuta} (resp.~\rmref{adualnightinpyongyang}), and for the cyclic homology in \rmref{seitenbachervollkornmuesli} we only considered the case $M:=R$, with action given by multiplication and coaction $R \to \Reh \otimes_\erre R \simeq \Reh, \ r \mapsto r \otimes_k 1$, which is easily seen to define an SaYD module.

\subsection{Morita base change invariance in Lie algebroid theory and the noncommutative torus} 
\subsubsection{Lie algebroids and associated left Hopf algebroids}
\label{ssection: Lie alg}
Assume that $R$ is a commutative $\Bbbk$-algebra (here $\Bbbk$ is a ground field of characteristic zero) 
and denote by $\mathrm{Der}_{\Bbbk}(R)$ the Lie algebra of all $\Bbbk$-linear derivations of $R$. 
Consider a $\Bbbk$-Lie algebra $L$ which is also an $R$-module, 
and let $\omega: L \to \mathrm{Der}_{\Bbbk}(R)$ be a morphism of $\Bbbk$-Lie algebras. 
Following \cite{Rin:DFOGCA}, the pair $(R,L)$ is called \emph{Lie-Rinehart algebra} with \emph{anchor} map $\omega$, provided
\begin{eqnarray*}
 (aX) (b) &=&  a(X(b)), \qquad  \\
{[ X, aY ]} &=& a{[X,Y]}+X(a)Y,
\end{eqnarray*}
for all $X, Y \in L$ and $a, b \in R$, where $X(a)$ stands for $\omega(X)(a)$.
A \emph{morphism $(R,L) \to (R,L')$ of Lie-Rinehart algebras over $R$} is a map $\varphi: L \to L'$ of  $\Bbbk$-Lie algebras such that 
$$
\xymatrix{ L \ar@{->}_-{\omega}[rd]  \ar@{->}^-{\varphi}[rr] & & L' \ar@{->}^-{\omega'}[ld] \\ & \mathrm{Der}_{\Bbbk}(R) & }
$$
is a commutative diagram. These objects form a category which we denote by $\mathbf{LieRine}_{(\Bbbk,\,R)}$.

\begin{example}
\label{Exam: bundles}
Here are the basic examples which we will be dealing with, and which motivate the above general definition:
\begin{enumerate}
\item The pair $(R, \mathrm{Der}_{\Bbbk}(R))$ trivially admits the structure of a Lie-Rinehart algebra. 
\item A \emph{Lie algebroid}  is a  vector bundle $\mathcal{E} \to \mathcal{M}$ over a smooth manifold, together with a map $\omega: \mathcal{E} \to T\mathcal{M}$ of vector bundles and a Lie structure $[-,-]$ on the  vector space  $\Gamma(\cE)$  of global smooth sections of $\mathcal{E}$, such that the induced map $\Gamma(\omega): \Gamma(\cE) \to \Gamma(T\mathcal{M})$ is a Lie algebra homomorphism, and for all $X, Y \in \Gamma(\cE)$ and any $f \in \mathcal{C}^{\infty}(\mathcal{M})$  one has $[X,fY]\,=\, f[X,Y]+ \Gamma(\omega)(X)(f)Y$.  Then the pair $(\mathcal{C}^{\infty}(\mathcal{M}), \Gamma(\cE))$ is obviously a Lie-Rinehart algebra.
\end{enumerate} 
\end{example}

Associated to any Lie-Rinehart algebra $(R, L)$ there is a universal object denoted by $(R,\cV L)$, see \cite{Rin:DFOGCA, Hue:PCAQ}. 
Using the notion of \emph{smash product} (or, more general, of \emph{distributive law between two algebras}), we give here an alternative construction (of which the {\em Massey-Peterson algebra} in \cite{MasPet:TCSOCFS,Hue:PCAQ} is a special case) of this object:  
let $UL$ be the universal enveloping algebra of $L$ with its canonical Hopf algebra structure, and consider the $\Bbbk$-linear map $L \overset{\omega}{\longrightarrow} 
\mathrm{Der}_{\Bbbk}(R)$. 
Extending this map to $UL$, we obtain the structure of a $UL$-module algebra on $R$. Following \cite[pp.~117--118]{Swe:GOSA}, the smash product $R\# UL$ admits the structure of a left $R$-bialgebroid, where the source and the target map coincide.  
Now take the following factor $R$-algebra of $R\# UL$:
$$
\pi: R\# UL \longrightarrow \cV L:=\,\, \frac{R\# UL}{\cJ_L},
$$
where
$$    
\cJ_L: ={\langle  a \# X - 1\# aX\rangle}_{a \in R,\, X \in L}
$$ 
is the two sided ideal generated by the set $\{a \# X - 1\# aX\}_{a \in R,\, X \in L}$. 
The $R$-bialgebroid structure of  $R\# UL$ projects to $\cV L$ (see \cite{Xu:QG}), and by  \cite[\S4.2.1]{Kow:HAATCT} one also has that $\cV L$ carries a left Hopf algebroid structure, the 
translation map on generators $a \in R$, $X \in L$ of $\cV L$ being given by
$$
a_{+}\tensor{\Ropp} a_{-} := a\tensor{\Ropp} 1, \qquad  X_{+}\tensor{{\Ropp}}X_{-}:= X\tensor{\Ropp}1 - 1 \tensor{\Ropp} X,
$$ 
where $\cV L \otimes_\Ropp \cV L := \due {\cV L} \blact {} \otimes_\Ropp \due {\cV L} {} \ract$  is as in \rmref{tata} (which is why we stick to the symbol $\Rop$ although $R$ is commutative), and where we identify the elements of $R$ and $L$ with their respective images by the universal  maps $\iota_R: R \to \cV L$, $a \mapsto a\# 1 + \cJ_L$ 
and $\iota_L: L \to \cV L$, $X \mapsto 1\# X +\cJ_L$.

\subsubsection{Vector bundles versus  $\sqrt{\mbox{Morita}}$ theories} \label{ssection: versus} 
Let $R$ be a commutative  $\Bbbk$-algebra as in \S\ref{ssection: Lie alg}.  
Assume we are given  a finitely generated and projective module $P_R$ which is faithful over $R$.
Then it is well known (see, for example, \cite[Corollary 1.10]{DeMeyer/Ingraham:1971}) that $R$ is Morita equivalent to the endomorphism 
ring $\End{P_R}$ since $R$ is commutative.  The context maps are given by 
$$
\begin{array}{rrrllll}
\phi:& P\tensor{R}P^* & \overset{\simeq}{\longrightarrow}& \End{P_R}, & (p\tensor{}\sigma & \longmapsto & [ p' \mapsto p\sigma(p')]), \\  
\psi:& P^*\tensor{\End{P_R}} P & \overset{\simeq}{\longrightarrow}& R, & (\sigma\tensor{}p & \longmapsto & \sigma(p)), 
\end{array}
$$
where $P^*={\rm Hom}(P_R, R_R)$.

Following \cite[Example 2.3.3]{Khalkhali:book}, we apply this Morita context  to the situation where $R$ is the algebra of smooth functions over a manifold $\cM$.
By the Serre-Swan theorem, it is well known that for a (complex) smooth vector bundle $\pi: \cP \to \cM$ of constant rank $\geq 1$ 
the global smooth sections $P:=\Gamma(\cP)$ form a finitely generated projective module over the commutative ring $R:=\cC^{\infty}(\cM)$ of complex-valued smooth functions on $\cM$, see, for instance, \cite[Remark, p.~183]{Nestruev:03}. One can furthermore show \cite[Remarque 2, p.~145]{Bou:AC12} that $P$ is of constant rank $\geq 1$ (the rank of $\pi$), and as such,  $P$ becomes a faithful $R$-module (as follows from \cite[p.~142, Corollaire, \& p.~143, Th\'eor\`eme 3 (ii)]{Bou:AC12}).
Therefore, $\cC^{\infty}(\cM)$ is Morita equivalent to the endomorphism algebra $\End{P_{\cC^{\infty}(\cM)}} \simeq \Gamma(\End{\cP})$. 
In this way, there is a functor from the category $\mathbf{LieRine}_{(\mathbb{C},\, \cC^{\infty}(\cM))}$ 
to the category of left Hopf algebroids over $\End{P_{\cC^{\infty}(\cM)}}$. 
This functor is defined on objects by sending any complex Lie-Rinehart algebra $(L,R)$ to the left $\End{P_R}$-Hopf algebroid $\Pe\tensor{\Reh}\cV L\tensor{\Reh}\Qe$, where $\Pe$, $\Qe$ are defined as in \S\ref{ssec: context} and correspond to the Morita context $(R, \End{P_R},P, P^*, \phi,\psi)$, i.e., with $Q\,=\, P^*$.

\begin{rem}\label{remark:field extension problem}
An analogue to the previous functor can, in fact, descend to the category of Lie algebroids 
over a smooth manifold $\cM$ if we take a real vector bundle and the algebra $\cC^{\infty}(\cM,\, \mathbb{R})$  
of real-valued smooth functions instead of $\cC^\infty(\cM) = \cC^{\infty}(\cM,\, \mathbb{C})$. 
We know from  Example \ref{Exam: bundles}(ii) that there is a  canonical faithful functor 
 from the category of Lie algebroids over $\cM$ to the category of real Lie-Rinehart algebras over $\cC^{\infty}(\cM,\, \mathbb{R})$. 
Now we can compose this functor with the one constructed by the same process as in \S\ref{ssection: versus}.  
In general, there is no obvious functor connecting the categories  $\mathbf{LieRine}_{(\mathbb{R},\, \cC^
 {\infty}(\cM,\, \mathbb{R}))}$ and  $\mathbf{LieRine}_{(\mathbb{C},\, \cC^{\infty}(\cM,\, \mathbb{C}))}$, except perhaps when $\cM$ is an almost complex manifold (i.e., a smooth manifold with a smooth endomorphism field $J: T\cM \to T\cM$ satisfying $J_x^2=-\id_{T\cM_x}$ for all $x\in \cM$).

Let us mention that due to our interest in the noncommutative torus,  we  have been forced to extend  the base field by using the complex-valued functions instead of real-valued ones. 
\end{rem}

The material of the following subsection will appear well known to the reader who is familiar with noncommutative differential geometry techniques. 
For the convenience of the rest of the audience,  we include a detailed exposition following ideas from \cite[\S3.1]{DKMM:2001}, \cite[\S1.1]{Khalkhali:book}.

\subsubsection{Noncommutative torus revisited.}\label{ssubsec:Tq}
Consider the Lie group $\mathbb{S}^1=\{z \in \mathbb{C}\setminus \{0\} \mid |z|= 1\}$ as a real $1$-dimensional torus by identifying it with the additive quotient $\mathbb{R}/2\pi \mathbb{Z}$. 
Likewise, the real $d$-dimensional torus $\mathbb{T}^d := \mathbb{S}^1 \times \cdots \times \mathbb{S}^1$ is identified with $\mathbb{R}^d/2\pi\mathbb{Z}^d$. The complex algebra of all smooth complex-valued functions on $\mathbb{T}^2$ will be denoted by $\cC^{\infty}(\mathbb{T}^2)$.

Fix a root of unity $\Sf{q} \in \mathbb{S}^1$ 
and take $N \in \mathbb{N}$ to be  the smallest natural number such that $\Sf{q}^N\,=\, 1$.  Let us consider the semidirect product group $\cG:=\mathbb{Z}_N^2 \ltimes \mathbb{S}^1$ where $\mathbb{Z}_N\,=\, \mathbb{Z}/N\mathbb{Z}$,
and operation
$$
(m,n,\theta) (m',n',\theta') := (m+m',n+n',\theta\theta'\Sf{q}^{mn'}),$$ for every pair of elements $(m,n,\theta),  \, (m',n',\theta') \in \cG$.
There is a right  action of the group $\cG$ on the torus $\mathbb{T}^3$ given as follows:
$$
 (\Sf{x},\Sf{y},\Sf{z}) (m,n,\theta) := 
(\Sf{q}^m\Sf{x}, \Sf{q}^n \Sf{y}, \theta \Sf{z}\Sf{y}^m), \quad (\Sf{x}, \Sf{y},\Sf{z}) \in \mathbb{T}^3, \quad (m,n,\theta) \in \cG.
$$
Now, we can show that the map 
$$
{\bf p} : \mathbb{T}^3 \longrightarrow \mathbb{T}^2, \quad  (\Sf{x},\Sf{y},\Sf{z}) \longmapsto (\Sf{x}^N,\Sf{y}^N) 
$$
satisfies:
\begin{enumerate}
\item $\bf p$ is a surjective submersion;
\item $\cG$ acts freely on $\mathbb{T}^3$ and the orbits of this action coincide with the fibres of $\bf p$.
\end{enumerate}
As a consequence and by applying \cite[Lemma 10.3]{Kolar/Michor/Slovak:1993}, we see that 
$(\mathbb{T}^3,\mathbf{p},\mathbb{T}^2,\cG)$ is a principal fibre bundle.  
We then 
want to associate a non-trivial vector bundle to the trivial bundle  $\mathbb{T}^3 \times \mathbb{C}^N \to \mathbb{T}^3$.  So, we need to extend the $\cG$-action on   $\mathbb{T}^3$ to $\mathbb{T}^3 \times \mathbb{C}^N$, which 
is possible  by considering the following  left $\cG$-action on $\mathbb{C}^N$
$$
 \cG \longrightarrow  \mathrm{End}_{\mathbb{C}}(\mathbb{C}^N), \quad
(m,n,\theta) \longmapsto \big\{ \omega \longmapsto \theta U_0^nV_0^{-m}\omega \big\},
$$
where $U_0,V_0$ are the $(N\times N)$-matrices
$$
U_0\,=\, \begin{pmatrix} 0 & 1& 0 & 0 & \cdots \\ 0 & 0 & 1 & 0 & \cdots \\ \vdots &  & \ddots & \ddots & \vdots \\ 0 & \cdots &  & 0 & 1 \\ 1 & 0 & \cdots & 0  & 0 \end{pmatrix},
\qquad 
V_0\,=\, \begin{pmatrix} 1 & 0& \cdots & \cdots & 0 \\ 0 & \Sf{q} & 0 & \cdots & 0 \\  0 & 0 &   \sf{q}^{2} & \cdots & 0 \\  \vdots &  & \ddots & \ddots & \vdots \\ 0 & 0 & \cdots & 0  & \sf{q}^{N-1} \end{pmatrix},
$$
which satisfy the relations
\begin{equation}\label{Eq:quantum}
U_0V_0\,\,=\,\, \Sf{q} V_0U_0,\qquad U_0^N\,=\, V_0^N\,=\, \mathbb{I}_N.
\end{equation}
Therefore, we have  a right $\cG$-action on $\mathbb{T}^3\times \mathbb{C}^N$ defined by
$$
 \big( (\Sf{x},\Sf{y},\Sf{z}); \omega\big)\, (m,n,\theta) := \Big( (\Sf{x},  \Sf{y},  \Sf{z})(m,n,\theta);\,\, (m,n,\theta)^{-1} \omega  \Big)= 
\Big( (\Sf{q}^m\Sf{x}, \Sf{q}^n \Sf{y}, \theta \Sf{z}\Sf{y}^m);\,\, \theta^{-1}U_0^{-n}V_0^{m}\omega \Big).
$$
The orbit space $(\mathbb{T}^3\times \mathbb{C}^N)/\cG = \mathbb{T}^3\times_{\cG} \mathbb{C}^N$ with elements $u\times_{\cG}\omega$ will be denoted by $\cE_{\Sf{q}}$. Notice that by definition one has the following formula:
$$
(ug) \times_{\cG}\omega \,\,=\,\, u\times_{\cG} (g \omega), \text{ for every } u \in \mathbb{T}^3,\, \omega \in \mathbb{C}^N
, \text{ and } g \in \cG.
$$
By applying \cite[Theorem 10.7, \S10.11]{Kolar/Michor/Slovak:1993} we can associate a 
non-trivial vector bundle to the trivial bundle $\mathbb{T}^3 \times \mathbb{C}^N \to \mathbb{T}^3$, that is, 
there is a morphism of vector bundles
\begin{equation}\label{Eq:Eqp}
\xymatrix{  \mathbb{T}^3 \times \mathbb{C}^N  \ar@{->}[rr] \ar@{->}_{pr_1}[d] & &  \cE_{\Sf{q}}  
\ar@{-->}^-{\bara{\bf p}}[d]  \\ \mathbb{T}^3 \ar@{->}_-{{\bf p}}[rr] & & \mathbb{T}^2.}
\end{equation}
By the results of \S\ref{ssection: versus}, we have that $\cC^{\infty}(\mathbb{T}^2)$ 
is Morita equivalent to 
$\End{\Gamma(\cE_{\Sf{q}})} \simeq \Gamma\big( \End{\cE_{\Sf{q}}} \big)$. 
Now, using \cite[Theorem 10.12]{Kolar/Michor/Slovak:1993}, 
$\Gamma(\cE_{\Sf{q}})$ is identified with the $\cG$-equivariant subspace  $\cC^{\infty}(\mathbb{T}^3,\, \mathbb{C}^N)^{\cG}$ of 
$\cC^{\infty}(\mathbb{T}^3,\, \mathbb{C}^N)$, that is, those $f \in \cC^{\infty}(\mathbb{T}^3,\, \mathbb{C}^N)$ for which 
$f(ug)\,=\, g^{-1} f(u) $, for every  $u \in \mathbb{T}^3$, $g \in \cG$.
Hence, we have an isomorphism  $\Gamma(\cE_{\Sf{q}})\, \simeq \, \cC^{\infty}(\mathbb{T}^3,\, \mathbb{C}^N)^{\cG}$ of $\cC^{\infty}(\mathbb{T}^2)$-modules.

Next, we want to describe the noncommutative complex algebra $\End{\Gamma(\cE_{\Sf{q}})} \simeq \Gamma\big( \End{\cE_{\Sf{q}}} \big)$. Observe 
that there is a left $\mathbb{Z}_N^2$-action on  the $(N\times N)$-matrix algebra ${\rm M}_N(\mathbb{C})$ with complex entries, defined by 
\begin{equation}\label{Eq: matrix action}
(m,n) A := U_0^nV_0^{-m}AV_0^{m}U_0^{-n},\;\; \text{ for every  } A \in {\rm M}_N(\mathbb{C}), \, (m,n ) \in \mathbb{Z}_N^2.
\end{equation}
There is also a free right $\mathbb{Z}_N^2$-action on $\mathbb{T}^2$ given by
$$
(\Sf{x},\Sf{y}) (m,n) := (\Sf{q}^m\Sf{x},\Sf{q}^n\Sf{y}), \;\; \text{ for every } (\Sf{x},\Sf{y}) \in \mathbb{T}^2, \, (m,n) \in \mathbb{Z}_N^2.
$$
As before, one can construct 
the orbit space $\mathbb{T}^2 \times_{\mathbb{Z}_N^2} {\rm M}_N(\mathbb{C})$ after extending these actions to the trivial algebra bundle $\mathbb{T}^2 \times {\rm M}_N(\mathbb{C})$.    It turns out that the endomorphism algebra bundle $\End{\cE_{\Sf{q}}}$ is isomorphic 
to this orbit space, and clearly $\Gamma(\End{\cE_{\Sf{q}}})$ consists  of $\mathbb{Z}_N^2$-equivariant sections, that is,
\begin{equation}\label{Eq:T}
\Sf{T} \in \Gamma(\End{\cE_{\Sf{q}}})\;\; \text{ if and only if }\,\, \Sf{T}(\Sf{q}^m\Sf{x},\Sf{q}^n\Sf{y}) \,\,=\,\,(m,n) \Sf{T}(\Sf{x}, \Sf{y}),\; 
\end{equation}
for every $(\Sf{x}, \Sf{y}) \in \mathbb{T}^2$ and $ (m,n) \in \mathbb{Z}_N^2$, where on the right hand side we mean the action \eqref{Eq: matrix action}.

On the other hand, it is well known that $\cC^{\infty}(\mathbb{T}^2)$ 
can be identified with the algebra of all smooth functions on $\mathbb{R}^2$ that are $2\pi$-periodic w.r.t.\ each of their arguments. By Fourier expansion  $\cC^{\infty}(\mathbb{T}^2)$ consists of all functions
$$
f \,\,=\,\, \sum_{(k,\,l) \, \in \, \mathbb{Z}^2} f_{k,l} u^kv^l,
$$
where $\{f_{k, l}\}_{(k,\,l) \, \in \mathbb{Z}^2}$ is any rapidly decreasing sequence of complex numbers, that is, for every $r \in \mathbb{N}$, the seminorm
\begin{equation}\label{Eq:Frechet}
\| f\|_r\,\,=\,\, \underset{k,\, l\, \in\mathbb{Z}}{\mathrm{sup}}\Big( |f_{k, l}| (1+ |k|+|l|)^r\Big) \, < \infty,
\end{equation}
and where $u=e^{i2\pi t}$, $v=e^{i2\pi s}$ are the coordinate functions on the torus $\mathbb{T}^2$.

It is also well known that the complex matrix algebra ${\rm M}_N(\mathbb{C})$ is generated as $\mathbb{C}$-algebra by the elements $U_0, V_0$. 
Thus, Eqs.~\eqref{Eq:quantum} and \eqref{Eq:T} force any $\Sf{T} \in \Gamma(\End{\cE_{\Sf{q}}})$ to be of the form 
$$ 
\Sf{T} \,\, =\,\, \sum_{k,l,\, \in \mathbb{Z}} \Sf{T}_{k,\, l}(uU_0)^k(vV_0)^l ,
$$ 
with coefficients $\{  \Sf{T}_{k,\, l}\}_{\mathbb{Z}^2}$ satisfying Eq.~\eqref{Eq:Frechet}. 
Therefore, there is now a $\mathbb{C}$-algebra isomorphism
\begin{equation*}
\label{Eq:nocom torus}
\Gamma(\End{\cE_{\Sf{q}}}) \to  \cC^{\infty}(\mathbb{T}^2_{\Sf{q}}),  \quad 
\big( (uU_0) \mapsto  U, \quad (vV_0) \mapsto V\big),
\end{equation*}
where $\cC^{\infty}(\mathbb{T}^2_{\Sf{q}})$ refers to the complex noncommutative $2$-torus whose elements are formal power Laurent series in $U, V$ with a rapidly decreasing sequence of coefficients (cf.~\eqref{Eq:Frechet}), subject to $UV = \Sf{q} VU$. 
In conclusion, we have the 
Morita context $(\cC^{\infty}(\mathbb{T}^2), \cC^{\infty}(\mathbb{T}_{\Sf{q}}^2), \Gamma(\cE_{\Sf{q}}), \Gamma(\cE_{\Sf{q}})^*)$, where in addition 
$\cC^{\infty}(\mathbb{T}^2)$ and $\cC^{\infty}(\mathbb{T}_{\Sf{q}}^2)$ are related by the algebra map
$$
\cC^{\infty}(\mathbb{T}^2) \longrightarrow  \cC^{\infty}(\mathbb{T}_{\Sf{q}}^2), \quad( u \mapsto U^N, \,\, v \mapsto V^N).
$$

In the next subsection, we will use the Morita context stated above 
together with Theorems \ref{thm:main1}  and \ref{thm:main2} to prove the Morita invariance of both cyclic homology and  cohomology from the left Hopf algebroid attached to the Lie algebroid of vector fields over the classical $2$-torus to the associated Morita base change left Hopf algebroid over the noncommutative $2$-torus (the primitive elements of which can be seen to consist of noncommutative vector fields, cf.\ the comment in the Introduction), using the construction performed in  \S\ref{subsect:Morita}.

\subsubsection{The cyclic homology  for the left Hopf algebroid over  the noncommutative torus}\label{ssubsec:q} 
Now we will direct our attention to the Morita invariance of the cyclic homology 
between the trivial Lie algebroid $\big( \cC^{\infty}(\mathbb{T}^2),\, K:={\rm Der}_{\mathbb{C}}\big( \cC^{\infty}(\mathbb{T}^2)\big)\big)$ and its induced 
left Hopf algebroid $(S, \widetilde{\cV K}:=\Pe\tensor{\Reh}\cV K \tensor{\Reh} \Qe)$, where 
\begin{equation}\label{Eq: Morita torus}
R:=\cC^{\infty}(\mathbb{T}^2), \;\;  S:= \cC^{\infty}(\mathbb{T}_{\Sf{q}}^2),\;\;  P:=\Gamma(\cE_{\Sf{q}}),\;\; Q:=  \Gamma(\cE_{\Sf{q}})^{*},
\end{equation}
and where the notation is that of \S\ref{ssubsec:Tq}. Here we can explicitly compute the structure maps of the left Hopf algebroid $\widetilde{\cV K}$ by using  the general description of \S\ref{subsect:Morita}, as well as the dual basis of $P$ which can be extracted from the dual basis of the trivial bundle $\mathbb{T}^3 \times \mathbb{C}^N$, see Eq. \eqref{Eq:Eqp}.  Applying  Theorems \ref{thm:main1} \& \ref{thm:main2} as well as \cite[Theorem 5.2]{KowKra:CSIACT} (and its dual version, cf.~\cite[Theorem~3.14]{KowPos:TCTOHA}), we obtain
\begin{cor}
Let $\Sf{q} \in \mathbb{S}^1$ be a root of unity,
and consider the Lie algebroid $(R,K)$  of vector fields of the complex torus $\mathbb{T}^2$ and its induced left Hopf algebroid $(R,\cV K)$. 
Let $M$ be a right $\cV K$-module 
and $(R,S,P,Q,\phi,\psi)$ the Morita context of Eq.~\eqref{Eq: Morita torus}. 
We then have the following natural $\mathbb{C}$-module isomorphisms
$$
\begin{array}{rclcrcl}
H_\bull(\cV K , M) &\simeq & H_\bull( \widetilde{\cV K}, \td{M}), && HC_\bull(\cV K, M) &\simeq& HC_\bull( \widetilde{\cV K}, \td{M}), \\
H^\bull(\cV K , M) &\simeq & H^\bull( \widetilde{\cV K}, \td{M}), && HC^\bull(\cV K, M) &\simeq& HC^\bull( \widetilde{\cV K}, \td{M}), 
\end{array}
$$
where $\widetilde{\cV K}$ is the Morita base change left Hopf algebroid over the noncommutative torus $\cC^{\infty}(\mathbb{T}_{\Sf{q}}^2)$.\\
Furthermore,  assume that $M$ be $R$-flat. Then we have that
$$
\begin{array}{rclcrcl}
H_\bull( \widetilde{\cV K}, \td{M}) &\!\!\!\!\simeq&\!\!\!\!  H_\bull(K, M), && 
HC_\bull( \widetilde{\cV K}, \td{M}) &\!\!\!\!\simeq&\!\!\!\! \textstyle\bigoplus_{i \geq 0}H_{\bull -2i}(K,M), \\
H^\bull( \widetilde{\cV K}, \td{M}) &\!\!\!\!\simeq&\!\!\!\!  M \otimes_\erre \textstyle\bigwedge^\bull_\erre K, && 
HP^\bull( \widetilde{\cV K}, \td{M}) &\!\!\!\!\simeq&\!\!\!\! \textstyle\bigoplus_{i \equiv \bull {\rm mod} 2}H_{i}(K,M) \\
\end{array}
$$ 
are natural $\mathbb{C}$-module isomorphisms,
where $H_\bull(K, M) :=  \Tor^{\cV K}_\bull(M, R)$,
 and where $HP^\bull$ denotes periodic cyclic cohomology (see, e.g., \cite[\S5.1.3]{Lod:CH} for the definition of $HP$).
\end{cor}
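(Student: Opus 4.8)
The plan is to read the statement as two separate blocks and to obtain them in turn: the first block is a direct instance of our Morita base change theorems, while the second block combines the first with the known computation of the Hopf-cyclic (co)homology of the enveloping algebra of a Lie--Rinehart algebra. Before applying anything, I would check that the data \eqref{Eq: Morita torus} genuinely forms a Morita context. By the Serre--Swan identification recalled in \S\ref{ssubsec:Tq}, $P=\Gamma(\cE_{\Sf{q}})$ is a finitely generated projective and \emph{faithful} module over the commutative ring $R=\cC^{\infty}(\mathbb{T}^2)$, so $R$ is Morita equivalent to $S=\End{P_R}\simeq\cC^{\infty}(\mathbb{T}^2_{\Sf{q}})$ with $Q=P^*$; this is exactly a context $(R,S,P,Q,\phi,\psi)$ in the sense of \S\ref{ssec: context}. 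The Lie algebroid $(R,K)$ with $K=\Der_\C(R)$ yields the left Hopf algebroid $(R,\cV K)$ as in \S\ref{ssection: Lie alg}, and $\widetilde{\cV K}=\Pe\otimes_\Ree\cV K\otimes_\Ree\Qe$ is its Morita base change as constructed in \S\ref{subsect:Morita}.

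For the first block I would equip the right $\cV K$-module $M$ with its canonical SaYD structure and invoke Lemma \ref{Lemma:I}, which transports that structure to $\td{M}=P\otimes_R M\otimes_R Q$; Theorems \ref{thm:main1} and \ref{thm:main2} then apply verbatim and produce the four isomorphisms $H_\bull(\cV K,M)\simeq H_\bull(\widetilde{\cV K},\td{M})$, $H^\bull(\cV K,M)\simeq H^\bull(\widetilde{\cV K},\td{M})$, and likewise for $HC_\bull$ and $HC^\bull$. Since these theorems were established by exhibiting explicit \emph{equivalences of (co)cyclic modules} (the maps $\theta_\bull,\gamma_\bull$ and $\zeta_\bull,\xi_\bull$), the same equivalences automatically induce isomorphisms on every invariant built functorially from the (co)cyclic object, so in particular periodic cyclic cohomology $HP^\bull$ is Morita base change invariant as well, even though it is not spelled out in Theorem \ref{thm:main2}.

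For the second block I would first use the first block to replace each group on the left by the corresponding Hopf-cyclic (co)homology of $(\cV K,M)$, and then apply \cite[Theorem 5.2]{KowKra:CSIACT} together with its cohomological counterpart \cite[Theorem 3.14]{KowPos:TCTOHA}. Under the standing hypothesis that $M$ is $R$-flat, these results identify the simplicial homology with the Lie--Rinehart homology $\Tor^{\cV K}_\bull(M,R)=H_\bull(K,M)$, the simplicial cohomology with $M\otimes_\erre\bigwedge^\bull_\erre K$, and, via the Hochschild--Kostant--Rosenberg-type degeneration of the mixed-complex (i.e.\ $SBI$) spectral sequence, the cyclic and periodic groups with the stated direct sums $\bigoplus_{i\ge 0}H_{\bull-2i}(K,M)$ and $\bigoplus_{i\equiv\bull\ \mathrm{mod}\ 2}H_i(K,M)$. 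Here $R$-flatness of $M$ is precisely what guarantees that the underlying $b$-complex of \eqref{adualnightinpyongyang} is a resolution computing the derived functor $\Tor^{\cV K}_\bull(M,R)$; the fact that $K$ is finitely generated projective (indeed free, the tangent bundle of $\mathbb{T}^2$ being trivial) over $R$ is what lets Rinehart's Poincar\'e--Birkhoff--Witt theorem present this $\Tor$ by the Chevalley--Eilenberg complex on $K$, and hence gives the exterior-power answer for cohomology.

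I expect that the only genuinely nonformal points, as opposed to bookkeeping, are two. First, verifying that $M$ carries the SaYD structure demanded by Theorems \ref{thm:main1}/\ref{thm:main2}: here I would write down the canonical left $\cV K$-comodule structure on $\cV K$-modules used in \cite{KowKra:CSIACT} and check \eqref{campanilla1}--\eqref{Eq:SaYD} on the algebra generators $a\in R$ and $X\in K$, using the explicit translation map $X_+\otimes_\Ropp X_-=X\otimes_\Ropp 1-1\otimes_\Ropp X$ of \S\ref{ssection: Lie alg}. Second, the degeneration underlying the cyclic and periodic decompositions, which is the real homological content and which I would import wholesale from \cite[Theorem 5.2]{KowKra:CSIACT} rather than reprove. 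Everything else in the proof is the composition of the Morita-invariance isomorphisms of the first block with these cited computations, so the corollary is essentially a synthesis rather than a new homological argument.
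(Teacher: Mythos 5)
Your proposal is correct and follows essentially the same route as the paper, whose entire proof consists of invoking the Morita context of \S\ref{ssubsec:Tq}, Theorems \ref{thm:main1} \& \ref{thm:main2} for the first block, and \cite[Theorem 5.2]{KowKra:CSIACT} together with its dual \cite[Theorem~3.14]{KowPos:TCTOHA} for the second. The points you single out --- endowing the right $\cV K$-module $M$ with its canonical (trivial-coaction) SaYD structure so that the main theorems apply, and deducing $HP^\bull$-invariance from the fact that $\theta_\bull,\gamma_\bull,\zeta_\bull,\xi_\bull$ are equivalences of (co)cyclic modules --- are exactly the details the paper leaves implicit, so your write-up is a faithful (indeed slightly more complete) rendering of the paper's argument.
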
 

\begin{rem}
In \cite[Theorem 5.2]{Zharinov:2005}, the Hochschild cohomology of the algebra $R=\cC^{\infty}(\mathbb{T}^2)$ was computed in terms of the exterior algebra of a two-dimensional complex vector space. 
So we can apply Corollary \ref{cor:alg} to deduce the Hochschild cohomology of the noncommutative torus  $\cC^{\infty}(\mathbb{T}_{\Sf{q}}^2)$, where $q$ is not a root of unity.  On the other hand, the same result \cite[Theorem 5.2]{Zharinov:2005} shows that  $K={\rm Der}_{\mathbb{C}}\big( \cC^{\infty}(\mathbb{T}^2)\big)$ 
is a  free $R$-module of  rank $2$. One can therefore also consider another application of Theorems \ref{thm:main1} \& \ref{thm:main2} by taking  the canonical Morita context $(R, M_4(R),K, K^*, db, ev)$ and the left Hopf algebroid $(R,\cV K)$. Here $M_4(R)$ denotes the $(4 \times 4)$-matrices over $R$, whereas $ev : K^*\tensor{M_4(R)}K \to R, \varphi\tensor{M_4(R)}x \mapsto \varphi(x)$ stands for the evaluation map and $db : K\tensor{R}K^*  \to \rm{End}(K_R) \cong M_4(R) $ for the dual basis map which sends any element $x\tensor{R}  \varphi$ to the $(4\times 4)$-matrix attached to the $R$-linear map $\left[ y \mapsto x \varphi(y)\right]$.  The details of this application are left to the reader. 
\end{rem}

\bibliographystyle{amsplain}

\begin{thebibliography}{10}

\bibitem{Bas:AKT}
Bass, H.: \emph{Algebraic {$K$}-theory}, W. A. Benjamin, Inc., New
  York-Amsterdam, 1968.

\bibitem{Boe:GTFHA}
B{\"o}hm, G.: \emph{Galois theory for {H}opf algebroids}, Ann. Univ. Ferrara Sez. 
VII (N.S.) \textbf{51} (2005), 233--262.

\bibitem{Boe:HA}
\bysame, \emph{Hopf algebroids}, Handbook of algebra, {V}ol. 6, North-Holland, Amsterdam,
  2009, pp.~173--236.

\bibitem{BoeSte:CCOBAAVC}
B{\"o}hm, G., {\c{S}}tefan, D.:
\emph{({C}o)cyclic (co)homology of bialgebroids: an approach via (co)monads}, Comm. Math. Phys.
  \textbf{282} (2008), no.~1, 239--286.

\bibitem{Bou:AC12}
Bourbaki, N.: \emph{\'{E}l\'ements de math\'ematique. {A}lg\`ebre commutative.
  {C}hapitre 1-4}
, Actualit\'es   Scientifiques et Industrielles, No.~1290, Herman, Paris, 1961.

\bibitem{BrzWis:CAC}
Brzezi{\'n}ski, T., Wisbauer, R.: \emph{Corings and comodules}, London
  Mathematical Society Lecture Note Series, vol. 309, Cambridge University
  Press, Cambridge, 2003.

\bibitem{Con:NCDG}
Connes, A.: \emph{Noncommutative differential geometry}, Inst. Hautes \'Etudes
  Sci. Publ. Math. (1985), no.~62, 257--360.

\bibitem{ConMos:HACCATTIT}
\bysame, \emph{Hopf algebras, cyclic cohomology and the transverse index
  theorem}, Comm. Math. Phys. \textbf{198} (1998), no.~1, 199--246.

\bibitem{Cra:CCOHA}
Crainic, M.: \emph{Cyclic cohomology of {H}opf algebras}, J. Pure Appl. Algebra
  \textbf{166} (2002), no.~1-2, 29--66.

\bibitem{Cra:DAACVEIACC}
\bysame, \emph{Differentiable and algebroid cohomology, van {E}st
  isomorphisms, and characteristic classes}, Comment. Math. Helv. \textbf{78}
  (2003), no.~4, 681--721.

\bibitem{DeMeyer/Ingraham:1971}
DeMeyer, F., Ingraham, E.: \emph{Separable Algebras over commutative rings}.  Lecture Notes in Mathematics, Vol.~181, 
Springer-Verlag, Berlin, 1971.

\bibitem{DenIgu:HHATSOFP}
Dennis, R., Igusa, K.: \emph{Hochschild homology and the second
  obstruction for pseudoisotopy},  Algebraic {$K$}-theory, {P}art {I}
  ({O}berwolfach, 1980),  Lecture Notes in Math., vol. 966, Springer, Berlin,
  1982, pp.~7--58.

\bibitem{DKMM:2001}
Dubois-Violette, M., Kriegl, A., Maeda, Y., Michor, P.: \emph{Smooth $*$-algebras},  Progress of Theoretical Physics. Supplement  \textbf{144} (2001), 54--78.


\bibitem{FeiTsy:AKT}
Fe{\u\i}gin, B., Tsygan, B.: \emph{Additive {$K$}-theory},
  $K$-theory, arithmetic and geometry (Moscow, 1984--1986), Lecture Notes in
  Math., vol. 1289, Springer, Berlin, 1987, pp.~67--209.

\bibitem{Gin:GGOPVB}
Ginzburg, V.: \emph{Grothendieck groups of {P}oisson vector bundles}, J.
  Symplectic Geom. \textbf{1} (2001), no.~1, 121--169.

\bibitem{HajKhaRanSom:SAYD}
Hajac, P., Khalkhali, M., Rangipour, B., Sommerh{\"a}user, Y.: 
\emph{Stable anti-{Y}etter-{D}rinfeld modules}, C. R. Math. Acad. Sci.
  Paris \textbf{338} (2004), no.~8, 587--590.

\bibitem{Hovey:02}
Hovey, M.: \emph{Morita theory for Hopf algebroids and presheaves of groupoids}. Amer. J. Math. \textbf{124} (2002), no.~6, 1289--1318.

\bibitem{Hovey/Strickland:05} 
Hovey, M., Strickland, N.: \emph{Comodules and Landweber exact homology theories}. Adv. Math. \textbf{192} (2005), no.~2, 427--456.

\bibitem{Hue:PCAQ}
Huebschmann, J.: \emph{Poisson cohomology and quantization}, J. Reine
  Angew. Math. \textbf{408} (1990), 57--113.

\bibitem{Khalkhali:book}
Khalkhali, M.: \emph{Basic noncommutative geometry}, EMS Series of Lectures
  in Mathematics, European Mathematical Society (EMS), Z\"urich, 2009.

\bibitem{Kolar/Michor/Slovak:1993}
Kol\'a\v{r}, I., Michor, P., Slov\'ak, J.: \emph{Natural operations in differential geometry}, Springer-Verlag, Berlin, Heidelberg, 1993.

\bibitem{Kow:HAATCT}
Kowalzig, N.: \emph{Hopf algebroids and their cyclic theory}, Ph.~D. thesis,
  Universiteit Utrecht and Universiteit van Amsterdam, 2009.

\bibitem{KowKra:CSIACT}
Kowalzig, N., Kr{\"a}hmer, U.: \emph{Cyclic structures in algebraic
  (co)homology theories}, Homology, Homotopy and Applications {\bf 13}
  (2011), no.~1, 297--318.

\bibitem{KowPos:TCTOHA}
Kowalzig, N., Posthuma, H.: \emph{The cyclic theory of {H}opf
  algebroids},  J. Noncomm. Geom. \textbf{5} (2011), no.~3, 423--476.

\bibitem{Lod:CH}
Loday, J.-L.: \emph{Cyclic homology}, second ed., Grundlehren der
  Mathematischen Wissenschaften, vol. 301, Springer-Verlag, Berlin, 1998.

\bibitem{LodQui:CHATLAHOM}
Loday, J.-L., Quillen, D.: \emph{Cyclic homology and the {L}ie
  algebra homology of matrices}, Comment. Math. Helv. \textbf{59} (1984),
  no.~4, 569--591.

\bibitem{MasPet:TCSOCFS}
Massey, W., Peterson, F.: \emph{The cohomology structure of
  certain fibre spaces. {I}}, Topology \textbf{4} (1965), 47--65.

\bibitem{McC:MEACH}
McCarthy, R.: \emph{Morita equivalence and cyclic homology}, C. R. Acad. Sci.
  Paris S\'er. I Math. \textbf{307} (1988), no.~6, 211--215.

\bibitem{Nestruev:03}
Nestruev, J.: \emph{Smooth manifolds and observables}, Graduate Texts in
  Mathematics, vol. 220, Springer-Verlag, New York, 2003.

 \bibitem{Rin:DFOGCA}
Rinehart, G.: \emph{Differential forms on general commutative algebras},
  Trans. Amer. Math. Soc. \textbf{108} (1963), 195--222.

\bibitem{Schau:BONCRAASTFHB}
Schauenburg, P.: \emph{Bialgebras over noncommutative rings and a structure
  theorem for {H}opf bimodules}, Appl. Categ. Structures \textbf{6} (1998),
  no.~2, 193--222.


\bibitem{Schau:DADOQGHA}
\bysame, \emph{Duals and doubles of quantum groupoids ({$\times\sb
  R$}-{H}opf algebras)}, New trends in Hopf algebra theory (La Falda, 1999),
  Contemp. Math., vol. 267, Amer. Math. Soc., Providence, RI, 2000,
  pp.~273--299.

\bibitem{Schau:FAATRB}
\bysame, \emph{Face algebras are {$\times_R$}-bialgebras}, Rings,
  {H}opf algebras, and {B}rauer groups ({A}ntwerp/{B}russels, 1996), Lecture
  Notes in Pure and Appl. Math., vol. 197, Dekker, New York, 1998,
  pp.~275--285.

\bibitem{Schau:MBCIQG}
\bysame, \emph{Morita base change in  quantum groupoids}
Locally compact quantum groups and groupoids (Strasbourg, 2002), IRMA Lect. Math. Theor. Phys., 2, de Gruyter, Berlin, 2003, pp.~79--103.


\bibitem{Swe:GOSA}
Sweedler, M.: \emph{Groups of simple algebras}, Inst. Hautes \'Etudes Sci. Publ.
  Math. (1974), no.~44, 79--189.

\bibitem{Tak:GOAOAA}
Takeuchi, M.: \emph{Groups of algebras over {$A\otimes \overline A$}}, J.
  Math. Soc. Japan \textbf{29} (1977), no.~3, 459--492.

\bibitem{Tak:squareMorita}
\bysame, \emph{$\sqrt{\mbox{Morita}}$ theory}, J.
  Math. Soc. Japan \textbf{39} (1987), no.~2, 301--336.

\bibitem{Xu:QG}
Xu, P.: 
\emph{Quantum groupoids}, Comm. Math. Phys. \textbf{216} (2001),
  no.~3, 539--581.


\bibitem{Zharinov:2005}
Zharinov, V.: \emph{The {H}ochschild cohomology of the algebra of smooth
  functions on a torus}, Teoret. Mat. Fiz. \textbf{144} (2005), no.~3,
  435--452.

\end{thebibliography}

\providecommand{\bysame}{\leavevmode\hbox to3em{\hrulefill}\thinspace}
\providecommand{\MR}{\relax\ifhmode\unskip\space\fi MR }
\providecommand{\MRhref}[2]{%
  \href{http://www.ams.org/mathscinet-getitem?mr=#1}{#2}
}
\providecommand{\href}[2]{#2}

\end{document}